\theoremstyle{plain}
\newtheorem{theorem}{Theorem}
\newtheorem{proposition}{Proposition}[section]
\newtheorem{lemma}[proposition]{Lemma}
\newtheorem{definition}{Definition}[section]
\theoremstyle{definition}
\newtheorem{remark}{Remark}
\numberwithin{equation}{section}
\newcommand\R{{\mathbb R}}
\newcommand\Torus{{\mathbb T}}
\def\ff{\widehat{f}}
\def\eps{{\varepsilon}}
\renewcommand\eps{\epsilon }
\newcommand{\Real}{\mathbb R}
\newcommand{\Complex}{\mathbb C}
\newcommand{\Integer}{\mathbb Z}
\newcommand{\norm}[1]{\left\lVert#1\right\rVert}
\newcommand{\abs}[1]{\left\vert#1\right\vert}
\newcommand{\set}[1]{\left\{#1\right\}}
\newcommand{\grad}{\nabla}
\newcommand{\Naturals}{\mathbb N}
\newcommand{\jap}[1]{\langle #1 \rangle}
\newcommand{\dss}{\displaystyle}
\newcommand{\vsp}{\vspace{0.2cm}}
\newcommand\dt{{\frac{\mathrm d}{\mathrm dt}}}
 \newcommand{\dd}{{\, \mathrm d}}
\begin{document}
 
\title{Landau damping in finite regularity for unconfined systems with screened interactions} \author{Jacob
  Bedrossian\footnote{\textit{jacob@cscamm.umd.edu}, University of
    Maryland, College Park. Partially funded by NSF grant DMS-1462029 and an Alfred P. Sloan Research Fellowship}, \, Nader
  Masmoudi\footnote{\textit{masmoudi@cims.nyu.edu}, Courant Institute
    of Mathematical Sciences, New York University. Partially supported
    by NSF grant DMS-1211806} \, and Cl\'ement
  Mouhot\footnote{\textit{c.mouhot@dpmms.cam.ac.uk}, Centre for
    Mathematical Sciences, University of Cambridge. Partially funded
    by ERC grant MATKIT}}

\date{\today}
\maketitle

\begin{abstract} 
We prove Landau damping for the collisionless Vlasov equation with a class of $L^1$ interaction potentials (including the physical case of screened Coulomb interactions) on $\Real^3_x \times \Real^3_v$ for localized disturbances of an infinite, homogeneous background. 
Unlike the confined case $\Torus^3_x \times \Real_v^3$, results are obtained for initial data in Sobolev spaces (as well as Gevrey and analytic classes). 
For spatial frequencies bounded away from zero, the Landau damping of the density is similar to the confined case. 
The finite regularity is possible due to an additional dispersive mechanism available on $\Real_x^3$ which reduces the strength of the plasma echo resonance.
\end{abstract} 

\setcounter{tocdepth}{1}
{\small\tableofcontents}

\section{Introduction} 

\subsection{The model} 
The collisionless Vlasov equation is a fundamental kinetic model for so-called `hot plasmas' and also arises elsewhere in physics, for example, in stellar dynamics \cite{BoydSanderson,LyndenBell67}.  
For single species models, the unknown is the probability density, known as the distribution function, $f(t,x,v)$ of particles in phase space. 
In this work, we consider the phase space $(x,v) \in \Real_x^3 \times \Real_v^3$ and we consider distribution functions of the form $f(t,x,v) = f^0(v) + h(t,x,v)$, where $f^0(v)$ 
is the infinitely extended, homogeneous equilibrium and $h(t,x,v)$ is the mean-zero fluctuation from equilibrium. 
Then, the Vlasov equations for the fluctuation are given by 
\begin{equation}\label{def:VPE}
  \left\{
\begin{array}{l} \dss 
\partial_t h + v\cdot \grad_x h + F(t,x)\cdot \grad_v \left(f^0 + h\right)  = 0, \\[3mm]
F(t,x) := -\grad_x W \ast_{x} \rho(t,x), \\[1mm] 
\dss \rho(t,x) := \int_{\R^d} h(t,x,v) \dd v, \\[3.5mm] 
h(t=0,x,v) = h_{\mbox{{\scriptsize in}}}(x,v).
\end{array}
\right.
\end{equation}
The potential $W(x)$ describes the mean-field interaction between particles.
In this paper we will be considering only $W \in L^1$ which satisfy (denoting $\jap{x} = (1 + \abs{x}^2)^{1/2}$), 
\begin{align}
\abs{\widehat{W}(k)} \lesssim \jap{k}^{-2}. \label{def:W} 
\end{align}
As we will see, one of the reasons for this assumption is that, together with a stability condition involving $f^0$ (see Definition \ref{def:L} below), \eqref{def:W} ensures that the linearized Vlasov equation behaves similarly to the free transport $\partial_th + x \cdot \grad_xh = 0$ (for long times) even at low spatial frequencies. Indeed, the results of \cite{Glassey94,Glassey95} show that this is not true in general if one allows Coulomb interactions $\widehat{W}(k) = \abs{k}^{-2}$. 

Screened Coulomb interactions provide a physically relevant setting which satisfies hypothesis \eqref{def:W} and the stability condition in Definition \eqref{def:L} for a large class of $f^0$ (see Proposition \ref{prop:electrostatic} below). 
This model arises when considering the distribution function for ions in a plasma, after making the approximations of (1) that the electrons can be considered massless and reach thermal equilibrium on a much faster time scale than the ion evolution (2) that the plasma is near equilibrium (3) that an electrostatic approximation is suitable, and (4) that ion collisions can be neglected.  
In this case, the force field $F$ satisfies (some physical parameters have been suppressed for notational convenience)
\begin{align}
F = - \grad \phi, \quad\quad -\Delta \phi + \alpha \phi  = \rho,  \label{def:screenCoulomb}
\end{align} 
where the parameter $\alpha > 0$ accounts for the fact that the electrons equilibrate in a manner to shield the long-range effects of the electric field. 
The quantity $\alpha^{-1/2}$ has units of length and is proportional to the quantity known in plasma physics as the \emph{Debye length}; it is the characteristic length-scale of the mean-field interactions \cite{BoydSanderson}.  
See \cite{HanKwan11,HanKwanIacobelli14,HanKwanRousset15} and the references therein for more details on the model \eqref{def:VPE} with \eqref{def:WscrnC} in the context of ion dynamics in quasi-neutral plasmas.  
In the case of \eqref{def:screenCoulomb}, we have $F = -\grad_x W \ast_x \rho$ with
\begin{align}
\widehat{W}(k) & = \frac{1}{\alpha + \abs{k}^2},  \label{def:WscrnC}
\end{align}
which satisfies \eqref{def:W}. 

\subsection{Landau damping and existing results}

It was discovered by Landau~\cite{Landau46} that the linearized Vlasov equations around homogeneous steady states satisfying certain stability conditions induce time decay on the non-zero modes of the spatial density. 
This decay, which is exponentially fast for analytic data, can be more easily deduced for the free transport evolution $\partial_th + v\cdot \grad_xh = 0$. 
For the free transport evolution, it becomes evident that the decay is due to to \emph{mixing in phase space}, that is, spatial information is transferred to smaller scales in velocity, which are averaged away by the velocity integral for $\rho$ (this appears to be first pointed out in \cite{VKampen55}).
The work of Landau can be summarized as asserting that the dynamics of the linearized Vlasov equations $\partial_th + v\cdot \grad_xh + F\cdot \grad_v f^0 = 0$ are asymptotic to free transport in a suitably strong sense as $t \rightarrow \infty$. 
A number of other works regarding the linearized Vlasov equations followed, providing mathematically rigorous treatments, clarifications, and generalizations \cite{VKampen55,backus,Penrose,Maslov,Degond86,Glassey94,Glassey95}. 
The phenomenon is now known as \emph{Landau damping} and is a cornerstone of plasma physics in approximately collisionless regimes; see e.g. \cite{Ryutov99,BoydSanderson,Stix}.

 The dynamics for each spatial mode decouples in the linearized Vlasov equations and the damping is derived in a relatively straightforward manner via the Laplace transform.  
In the nonlinear equations, there exist steady states and traveling waves with non-trivial densities \cite{BGK57,LZ11b}, however, one can still hope for Landau damping in a perturbative nonlinear regime.   
In the perturbative nonlinear setting, the decoupling of Fourier modes of course ceases to hold and it remained debated for decades
whether or on which timescale the damping would hold (for example, the various discussions in \cite{Penrose,backus,Stix}; see \cite{MouhotVillani11} for more information).
The existence of analytic Landau damping solutions to the nonlinear Vlasov equations in $\Torus_x \times \Real_v$ was first demonstrated in \cite{CagliotiMaffei98,HwangVelazquez09},  
 but only in \cite{MouhotVillani11} was there given a full proof of nonlinear stability
with Landau damping in the nonlinear setting, and again in the confined case $\Torus_x^d \times \Real_v^d$ and for smooth enough Gevrey \cite{Gevrey18} or analytic data. 
The proof was later simplified and the result
improved to the `critical' Gevrey regularity in \cite{BMM13} by combining ideas of \cite{MouhotVillani11} and \cite{BM13}. 

It is desirable for physical relevance to extend the theory to the \emph{unconfined case}, i.e. when the phase space is $\R^d_x \times \Real^d_v$.
There are several issues with this even at the linear level. 
First, at low spatial frequencies, the decay due to mixing for free transport is very slow -- there is an additional dispersive decay but, this is only $t^{-d}$ in $L^\infty$. 
Second, for Vlasov-Poisson, e.g. when the force field is given by $F = -\mu\grad_x \Delta_x^{-1} \rho$ with $\mu \in \Real$, it was shown in \cite{Glassey94,Glassey95} that 
the linearized Vlasov equations cannot be treated as a perturbation of free transport at low spatial frequencies.   
At the linear level, the modes decouple so these issues only occur at low spatial modes; at higher spatial modes, the damping is the same as in the confined case.  
It is then natural to ask whether or not nonlinear stability in \eqref{def:VPE} still holds in a certain sense and that, at least, the decay of the spatial modes away from zero (short waves) remains similar to the confined case.  
In this paper, we positively answer this question in the case that $W$ satisfies \eqref{def:W} (and the linear stability condition in Definition \ref{def:L} below). 
These conditions precisely imply that the linearized Vlasov equation is close enough to free transport at low frequencies.  
Moreover, by taking advantage of a dispersive effect in \emph{frequency} (see \S\ref{sec:Echo} and \S\ref{sec:Proofa}), we are able to get results in finite regularity.

Previous finite regularity Landau damping results have only been obtained for kinetic models in which $\widehat{W}$ has compact support, such as Vlasov-HMF \cite{MR3437866} or the mean-field Kuramoto model \cite{MR3471147,FGVG}. 
These results have been proved in the confined case; see \S\ref{sec:Echo} for more discussion on how finite regularity is obtained. 
A dispersive result in finite-regularity for Vlasov-Poisson in the unconfined case $\Real_x^3 \times \Real_v^3$ without an infinite background density, that is $f^0(v) = 0$, was carried out in \cite{BardosDegond1985}. 
The lack of an infinite background greatly simplifies the setting: the dynamics do not include the linearized Vlasov equations and moreover, it is significantly easier to propagate moments in $x-tv$ on $f(t,x,v)$ -- an important aspect of \cite{BardosDegond1985} (propagating such moments seems very difficult -- possibly impossible -- even for the linearized Vlasov equations with $W \in L^1$ and $f^0$ very small). 
Moreover, the results of \cite{BardosDegond1985} do not directly extend to statements of the form \eqref{ineq:HiFreqDamping} or \eqref{ineq:HiFreqDampingA}, which quantify the fast decay of higher spatial modes (almost equivalently, the techniques seem ill-suited for deducing convergence in such strong norms as \eqref{ineq:glidingconverg} and \eqref{ineq:glidingconvergA}). 

\subsection{The main results}

Our working norm in this paper is the weighted Sobolev norm:  
\begin{align*}
  \norm{h}_{H_M^\sigma} = \sum_{\abs{\alpha} \leq
  M}\norm{\jap{\grad_{x,v}}^\sigma \left(v^\alpha h\right)}_{L^2}. 
\end{align*}

Notice that 
\begin{align*}
  \norm{h}_{H_M^\sigma} \approx_M \left(\sum_{\abs{\alpha} \leq
  M}\norm{\jap{\grad_{x,v}}^\sigma \left(v^\alpha h\right)}_{L^2}^2
  \right)^{1/2} \approx_{M,\sigma} \sum_{\abs{\alpha} \leq M}\norm{v^\alpha \jap{\grad_{x,v}}^\sigma h}_{L^2},  
\end{align*} 
so that one may order the moments and derivatives in whichever order
is most convenient.

The following linear stability condition is essentially an adaptation to finite regularity of the condition given in \cite{MouhotVillani11} (which is essentially the same as the Penrose condition \cite{Penrose}). 

\begin{definition} \label{def:L} 
  Given a homogeneous distribution $f^0(v)$ we say that it satisfies the
  \emph{stability condition \textbf{(L)}} if there exists constants
  $C_0,\kappa,\bar{\sigma} > 0$, with $\bar{\sigma} > 3/2$ and an
  integer $M > 3/2$ such that
\begin{align} 
\norm{f^0}_{H^{\bar{\sigma}}_M} \leq C_0, \label{ineq:f0Loc}
\end{align} 
and
\begin{align} 
  \inf_{\xi \in \Complex: \textbf{Re}\, \xi \leq 0} \inf_{k \in \Real^3}\abs{\mathcal{L}(\xi,k) - 1} \geq \kappa, \label{ineq:kapL}
\end{align} 
where $\mathcal{L}$ is defined by the following ($\bar{\xi}$ denotes the complex conjugate of $\xi$),
\begin{align} 
  \mathcal{L}(\xi,k) = -\int_0^\infty e^{\bar{\xi}t} \widehat {f^0}\left(kt\right) \widehat{W}(k)\abs{k}^2t \dd t. \label{ineq:mL}
\end{align}
\end{definition}

In \S\ref{sec:Linear} below, we discuss in detail how stringent the stability condition \textbf{(L)} is.  
We note here that if one takes power-law interactions, $W(x) = \mu \abs{x}^{-1}$ for any $\mu \in \Real$, then \textbf{(L)} fails for \emph{every} equilibrium $f^0 \in H^{3/2+}_2$, see \cite{Glassey94,Glassey95} (see \S\ref{sec:Note} for the notation $H^{p+}$). 
A smallness condition on $\norm{W}_{L^1} \norm{f^0}_{H^{3/2+}_2}$ is sufficient to
satisfy \eqref{ineq:kapL}, however, it is not necessary.  Indeed, we
show in Proposition \ref{prop:electrostatic} below that \textbf{(L)}
is satisfied for the screened Coulomb law \eqref{def:WscrnC}, the fundamental solution to \eqref{def:screenCoulomb}, for all $\alpha > 0$ and all rapidly decaying, radially symmetric equillibria $f^0$. 
The proof extends to any potential $\widehat{W}$ satisfying 
\begin{align*}
0 \leq \widehat{W}(k) \lesssim \jap{k}^{-2},   
\end{align*}
and hence, a variety of large $W$ and $f^0$ are permitted. 

Our main result is:
\begin{theorem} \label{thm:main} 
There exist universal constants $R_0 > 0$ and $c \in (0,R_0)$ such that if  $\bar{\sigma}-3 > \sigma > R_0$ 
and $f^0$ is given which satisfies stability condition \textbf{(L)} with constants $M$, $C_0$, $\kappa$, and $\bar{\sigma}$ and $h_{\mbox{{\scriptsize {\em in}}}}$ is mean-zero and satisfies
\begin{align*} 
\sum_{\abs{\alpha} \leq 2}\norm{z^\alpha h_{\mbox{{\scriptsize {\em in}}}}}_{H^\sigma_M} \leq \epsilon_0,
\end{align*} 
then there exists a mean-zero $h_\infty \in H^{\sigma-c}_M$ so that the solution $h(t,x,v)$ to
\eqref{def:VPE} satisfies the following for all $t \geq 0$,
\begin{subequations} \label{ineq:damping}
\begin{align} 
  \norm{h(t,x+tv,v) - h_\infty(x,v)}_{H^{\sigma-c}_M} & \lesssim \frac{\epsilon}{\jap{t}^{3/2}}, \label{ineq:glidingconverg} \\
  \abs{\hat{\rho}(t,k)} & \lesssim \epsilon \jap{k,kt}^{-(\sigma-c)},  \label{ineq:HiFreqDamping} \\ 
  \norm{\jap{\grad_x}^{\sigma-c-4} F(t)}_{L^\infty} & \lesssim \frac{\epsilon}{\jap{t}^{4}}. \label{ineq:ErhoLinfty}
\end{align}  
\end{subequations}
\end{theorem}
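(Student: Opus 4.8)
The plan is to follow the now-standard route for nonlinear Landau damping (Mouhot--Villani \cite{MouhotVillani11}, and \cite{BMM13,BM13}), reworked for the unconfined geometry $\Real^3_x\times\Real^3_v$ and for Sobolev rather than Gevrey data. \emph{Step 1 (reformulation and the density equation).} Pass to the profile along free transport, $g(t,z,v):=h(t,z+tv,v)$, which solves
\[
\partial_t g + F(t,z+tv)\cdot\grad_v f^0 + F(t,z+tv)\cdot(\grad_v - t\grad_z)g = 0,\qquad F=-\grad_x W\ast_x\rho .
\]
In the Fourier variables $(k,\eta)$ dual to $(z,v)$ one checks $\hat\rho(t,k)=\widetilde g(t,k,kt)$: the density is the trace of the profile on the resonant slice $\eta=kt$. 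Duhamel's formula evaluated on this slice produces a closed Volterra equation
\[
\hat\rho(t,k)=\hat\rho^{\,0}(t,k)+\int_0^t K^0(t-s,k)\,\hat\rho(s,k)\,ds+\mathcal N[\rho,g](t,k),
\]
with $\hat\rho^{\,0}(t,k)=\widetilde g_{\mathrm{in}}(k,kt)$ the free-transport density, $K^0(\tau,k)=-\widehat W(k)\abs{k}^2\tau\,\widehat{f^0}(k\tau)$ the linearized Landau kernel — whose Laplace transform in $\tau$ is the function $\mathcal L(\cdot,k)$ of Definition~\ref{def:L}, so that $\mathrm{Id}-K^0(\cdot,k)\ast$ has symbol $1-\mathcal L(\cdot,k)$ — and
\[
\mathcal N[\rho,g](t,k)=\int_0^t\!\!\int_{\Real^3}(t-s)\,(k\cdot k')\,\widehat W(k')\,\hat\rho(s,k')\,\widetilde g\bigl(s,k-k',t(k-k')+(t-s)k'\bigr)\,dk'\,ds ,
\]
in which the $\widetilde g$-factor is evaluated off its own resonant slice by $(t-s)k$. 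Since $F$ is determined by $\rho$, this is coupled to a transport estimate for $g$, closing a loop $\rho\to F\to g\to\rho$; note also that the $k=0$ mode of $\rho$ is conserved, so mean-zero is propagated.

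\emph{Step 2 (functional framework and linear estimates).} Run a bootstrap on (i) a weighted density bound $\sup_{t\ge0}\norm{\jap{k,kt}^{\sigma-c}\hat\rho(t,k)}_{L^2_k}\la\epsilon$ together with the pointwise version \eqref{ineq:HiFreqDamping}, and (ii) profile bounds $\sup_{t\ge0}\norm{g(t)}_{H^{\sigma-c}_M}\la\epsilon$ and $\norm{\partial_t g(t)}_{H^{\sigma-c}_M}\la\epsilon\jap{t}^{-5/2}$, the latter giving \eqref{ineq:glidingconverg} after integration in time; \eqref{ineq:ErhoLinfty} then follows by integrating $\abs{\hat F(t,k)}=\abs{k}\abs{\widehat W(k)}\abs{\hat\rho(t,k)}$ over $k$ at the cost of a controlled number of derivatives. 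The linear analysis has two regimes. For $\abs{k}\gtrsim1$ the Volterra operator $\mathrm{Id}-K^0(\cdot,k)\ast$ is inverted using condition \textbf{(L)}: \eqref{ineq:kapL} is precisely the Paley--Wiener-type lower bound $\abs{1-\mathcal L}\ge\kappa$ that yields, for each fixed $k$, resolvent bounds on $\jap{k,kt}^{\sigma-c}\hat\rho$ with constants uniform in $k$, exactly as in the confined case. For $\abs{k}\to0$ the key feature of \eqref{def:W} is that $\abs{k}^2\widehat W(k)\to0$, so $K^0(\cdot,k)$ is small and the density equation is a perturbation of free transport; there the $\jap{t}^{-3/2}$ decay is supplied by the dispersion of $\widehat g_{\mathrm{in}}$ on the slice $\eta=kt$ (using the regularity $\sigma$ and the extra moments assumed on $h_{\mathrm{in}}$), which replaces the now too slow phase-mixing decay. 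This is exactly the step where Coulomb must be excluded: for $\widehat W(k)=\abs k^{-2}$ one has $\abs k^2\widehat W\equiv1$, \textbf{(L)} fails \cite{Glassey94,Glassey95}, and free transport is not a valid approximation at low frequency.

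\emph{Step 3 (the nonlinear echoes — the main obstacle).} The crux is estimating $\mathcal N[\rho,g]$ in the norms above. Iterating $\mathcal N$ — using that $\widetilde g$ has itself been generated by the nonlinearity at earlier times — produces the plasma-echo cascade: the product $\widehat W(k')\hat\rho(s,k')\widetilde g(s,k-k',\cdot)$ is concentrated near the echo times at which characteristic slices of distinct frequency components nearly align, and on $\Torus^d$, with $k,k'\in\Z^d$, these near-resonances stack and force analytic/Gevrey regularity. On $\Real^3$ the sum over $k'$ is an integral over a continuum, and the set of $k'$ producing a near-resonance at a given time has small measure, so the $k'$-integration gains a power of $\jap{t}$ — this is the dispersive effect in frequency of \S\ref{sec:Echo}. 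One makes it quantitative by: splitting $\mathcal N$ into a ``reaction'' part (profile frequency $\abs{k-k'}$ large, the echo-prone term) and a transport/remainder part controlled directly by the weighted $g$-estimate and integration by parts in $s$; decomposing the reaction part dyadically in $\abs k,\abs{k'},\abs{k-k'}$ and in time, and estimating each resonant block using the gain just described; and finally summing the blocks — because successive echoes are weaker by a fixed factor, the geometric-type series converges and costs only a \emph{fixed} loss of $c$ derivatives, which is precisely why finite $\sigma$ suffices. The constraints $\sigma>R_0$ (enough regularity for the echo sum and the dispersive gain to beat the loss) and $\bar\sigma-3>\sigma$ (the $3=d$ absorbs the $k'$-Jacobian and the regularity of $f^0$ and $\widehat W$ consumed in $K^0$) enter here. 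Obtaining the sharp power of $\jap{t}$ in each block and closing the induction on echo generations is where essentially all the work lies.

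\emph{Step 4 (closing the bootstrap and extracting $h_\infty$).} The linear terms are bounded by $C\epsilon_0+(\text{small})\cdot\epsilon$ and the nonlinear terms by $C\epsilon^2$, so a continuity argument closes the bootstrap for $\epsilon_0$ small depending only on $C_0,\kappa,M,\sigma$ (hence $\epsilon\la\epsilon_0$). The bound $\norm{\partial_t g(t)}_{H^{\sigma-c}_M}\la\epsilon\jap{t}^{-5/2}$ — in which the $M$ velocity moments absorb the factors of $t$ arising from $F(t,z+tv)$ and from $t\grad_z g$, using the fast decay of $F$ established above — shows $g(t)$ is Cauchy in $H^{\sigma-c}_M$; its limit is the mean-zero $h_\infty$, the tail yielding \eqref{ineq:glidingconverg}, while \eqref{ineq:HiFreqDamping} and \eqref{ineq:ErhoLinfty} are read off directly from the density bootstrap.
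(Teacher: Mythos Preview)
Your overall architecture is right --- gliding frame, Volterra equation for $\hat\rho$, bootstrap coupling $\rho$ and $g$, echo term as the crux --- but two structural choices diverge from the paper in ways that would prevent your scheme from closing as written.

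First, your bootstrap is too thin. The paper runs not two but five nested controls at strictly decreasing regularity levels $\sigma_4>\sigma_3>\sigma_2>\sigma_1$: a top-order norm $\norm{\jap{t\grad_z,\grad_v}g}_{H^{\sigma_4}_M}$ that is \emph{allowed to grow} like $\jap{t}^{5/2}$; an $L^2_tL^2_k$ (not $L^\infty_t$!) control on $|k|^{1/2}\jap{k,kt}^{\sigma_4}\hat\rho$; a uniform-in-time $\norm{|\partial_z|^\delta g}_{H^{\sigma_3}_M}$; an $L^\infty_kL^2_t$ control on $\hat\rho$ at level $\sigma_2$; and finally the $L^\infty_{k,\eta}$ bound on $\jap{\nabla}^{\sigma_1}\hat g$ that yields \eqref{ineq:HiFreqDamping}. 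The growth of the top norm is not an artifact: the reaction term in the $H^{\sigma_4}$ energy estimate for $g$ produces a factor $\epsilon\jap{t}^2$, and after Young's inequality this becomes $\epsilon^2\jap{t}^5\norm{A_{\sigma_4}\hat\rho}_{L^2_k}^2$, which is only time-integrable because of the $L^2_t$ density control and is only consistent with a $\jap{t}^5$-growing squared norm. A single uniform $H^{\sigma-c}_M$ bound on $g$ cannot absorb this, and the linear term in the uniform estimate for $g$ needs $\rho$ at a strictly \emph{higher} regularity than $g$ itself --- hence the hierarchy. Your $\norm{\partial_t g}\lesssim\epsilon\jap{t}^{-5/2}$ is also not how $h_\infty$ is built: the paper integrates the Duhamel formula for $\hat g$ and shows the integrand is $L^1_t$ in $H^{\sigma_0}_M$ for $\sigma_0<\sigma_1-5/2$.

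Second, your echo estimate is not the paper's mechanism. There is no dyadic decomposition, no induction on echo generations, no geometric series. The paper bounds the reaction operator on $L^2_tL^2_k$ directly by Schur's test applied to the explicit time-response kernel $\bar K(t,\tau,k,\ell)\lesssim\epsilon|k|^{1/2}|\ell|^{1/2}\jap{\tau}\jap{\ell}^{-2}\jap{k-\ell,kt-\ell\tau}^{-\beta}$. The dispersive gain is extracted geometrically: split $\ell=\ell_\parallel+\ell_\perp$ relative to $k$ and carve out a \emph{time-shrinking cylinder} $I_R=\{|\ell_\perp|<(1+\tau)^{-\zeta}|k|^b\}$ of resonant frequencies. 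Inside $I_R$, integrating the 2D disk in $\ell_\perp$ gains a factor $(1+\tau)^{-2\zeta}$ (this is exactly where $d\ge3$ enters); outside, $|kt-\ell\tau|\gtrsim\tau(1+\tau)^{-\zeta}|k|^b$ gives decay for free. Balancing $\zeta\in(4/5,1)$ and $b=\beta^{-1}$ closes both the row-sum and column-sum Schur bounds once $\beta>10$, which is the source of the finite threshold $R_0$. Your qualitative picture (resonant set has small measure) is right, but the argument is this geometric one, not a paraproduct-style iteration.

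One minor point: the linear Volterra inversion (Proposition~\ref{lem:LinearL2Damping}) is pointwise in $k$ uniformly down to $k=0$ once \textbf{(L)} holds --- there is no separate low-frequency regime, because $\widehat W\in L^\infty$ and the smallness of $|k|^2\widehat W(k)$ near zero is already encoded in \textbf{(L)}.
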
 
\begin{remark} 
The proof shows that we may take $R_0 = 36$ and $c = 5$, although these are unlikely to be sharp. 
\end{remark} 

\begin{remark} 
  Theorem \ref{thm:main} holds in all $d \geq 3$; in this case, $R_0$
  depends in general on dimension.
\end{remark} 

\begin{remark}
  An easier variant of our proof would yield a similar result in the
  case where $f^0=0$ (no homogeneous background). The linear stability
  condition is trivially satisfied then, and our nonlinear estimates
  adapt in a simpler way. 
\end{remark}

\begin{remark} 
That $c$ and $R_0$ are taken independent of all parameters shows that regularity loss remains uniform even as $\sigma \rightarrow \infty$. 
\end{remark}

A natural question is whether or not one still observes exponential
decay of $\hat{\rho}(t,k)$ for $k$ bounded away from zero if the
initial data is analytic.  This is indeed the case, which is proved
via an easy variation of the proof of Theorem \ref{thm:main} using
some basic ideas from \cite{BMM13}.

\begin{theorem} \label{thm:analytic} Let $f^0$ be given which
  satisfies stability condition \textbf{(L)} with constants $M$,
  $C_0$, $\kappa$, and is real analytic with
  \[
  \norm{e^{\bar{\lambda}\jap{\grad}} f^0}_{L^2_M} < \infty \ \mbox{
    for some } \ \bar{\lambda} > 0.
  \]
  Then there exists a $\lambda^\star \in(0,\bar{\lambda}]$ depending
  only on $f^0$ such that for all
  $0 < \lambda^\prime < \lambda < \lambda^\star$, there exists an
  $\epsilon_0$ such that if $h_{\mbox{{\scriptsize {\em in}}}}$ is mean-zero
  and satisfies
  \begin{align*} 
    \sum_{\abs{\alpha} \leq 2}\norm{z^\alpha e^{\lambda \jap{\grad}}
    h_{\mbox{{\scriptsize {\em in}}}}}_{L^2_M} \leq \epsilon_0,
  \end{align*} 
  then there exists a mean-zero, real analytic $h_\infty$ satisfying
  for all $t \geq 0$,
  \begin{subequations} \label{ineq:dampingA}
    \begin{align} 
      \norm{e^{\lambda^\prime \jap{\grad}}\left(h(t,x+tv,v) - h_\infty(x,v)\right)}_{L^2_M} & \lesssim \frac{\epsilon}{\jap{t}^{3/2}}, \label{ineq:glidingconvergA} \\
      \abs{\hat{\rho}(t,k)} & \lesssim \epsilon e^{-\lambda^\prime\jap{k,kt}},  \label{ineq:HiFreqDampingA} \\ 
      \norm{e^{\lambda^\prime\jap{\grad_x,t\grad_x}} F(t)}_{L^\infty} & \lesssim \frac{\epsilon}{\jap{t}^{4}}. \label{ineq:ErhoLinftyA}
    \end{align}  
  \end{subequations}
\end{theorem}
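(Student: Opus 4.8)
The plan is to run the same iteration scheme as in the proof of Theorem~\ref{thm:main}, simply replacing everywhere the Sobolev Fourier multiplier $\jap{k,kt}^{\sigma}$ (and the companion weights on the kinetic distribution measured in the gliding frame) by the analytic multiplier $e^{\lambda(t)\jap{k,kt}}$, where $\lambda(t)$ is a time-dependent radius of analyticity that is allowed to decrease monotonically from $\lambda$ down to $\lambda'$ as $t\to\infty$. This is precisely the device used in \cite{MouhotVillani11} and, in the sharper form, in \cite{BMM13,BM13}: one budgets a small amount of analyticity loss at each ``echo time,'' and the total loss $\lambda - \lambda'$ is controlled because the loss per echo is summable. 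Since $e^{\lambda\jap{k,kt}}$ dominates any polynomial $\jap{k,kt}^{\sigma}$, all the \emph{product} and \emph{commutator} estimates that drive the bootstrap for Theorem~\ref{thm:main} (the bilinear estimates for the nonlinearity $F\cdot\grad_v h$ in the gliding frame, the reaction/transport split, and the elliptic bound $|\widehat W(k)|\lesssim \jap{k}^{-2}$) transfer verbatim once we check that the analytic norm is an algebra under the relevant convolutions; this is standard for Gevrey-$1$ (analytic) norms built on $e^{\lambda\jap{\cdot}}$, using $\jap{k}\le \jap{k-\ell}+\jap{\ell}$ and hence $e^{\lambda\jap{k}}\le e^{\lambda\jap{k-\ell}}e^{\lambda\jap{\ell}}$.

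The steps, in order, would be: (i) set up the bootstrap in the norm $\|e^{\lambda(t)\jap{\grad,t\grad}} h(t,x+tv,v)\|_{L^2_M}$ with $\lambda(t)$ decreasing, say $\dot\lambda(t) = -K\jap{t}^{-2}(\lambda - \lambda')$ or a comparable profile chosen so that $\int_0^\infty |\dot\lambda| \lesssim \lambda-\lambda'$ is as small as we like by taking $\lambda^\star$ small; the term $\dot\lambda \jap{\grad,t\grad}$ produced by differentiating the norm in time is a \emph{good} (negative) term that provides a CK-type (Cauchy--Kovalevskaya) dissipation which absorbs the loss of regularity in the nonlinear estimates exactly as in \cite{MouhotVillani11,BMM13}; (ii) re-derive the linear (Volterra/Laplace) estimate: here one needs the analytic analogue of the resolvent bound for $\mathcal L$, namely that $(\mathbf 1 - \mathcal L)^{-1}$ acting on $e^{\lambda\jap{k,kt}}$-weighted data is bounded, which follows from condition \textbf{(L)} since \eqref{ineq:kapL} gives a quantitative lower bound $\ge\kappa$ on $|\mathcal L - 1|$ on $\mathrm{Re}\,\xi\le 0$ and the analyticity of $f^0$ (the hypothesis $\|e^{\bar\lambda\jap{\grad}}f^0\|_{L^2_M}<\infty$) gives exponential decay of $\widehat{f^0}(kt)$ in the right half-plane; this is where $\lambda^\star$ must be taken $\le\bar\lambda$ and small relative to $\bar\lambda$, exactly as in \cite{MouhotVillani11}; (iii) close the bootstrap, obtaining \eqref{ineq:glidingconvergA} from the gliding-frame convergence of $h$ (the $\jap{t}^{-3/2}$ rate is the same dispersive rate as in Theorem~\ref{thm:main}, coming from the $\R^3_x$ dispersion, and it is inherited from the zero-spatial-frequency part of the data which is not improved by analyticity); (iv) deduce \eqref{ineq:HiFreqDampingA} by reading off the density estimate in the analytic norm and then \eqref{ineq:ErhoLinftyA} from $|\widehat W(k)|\lesssim\jap{k}^{-2}$, the bound on $\hat\rho$, and a crude Sobolev embedding in $k$, losing only a bounded amount of the analyticity radius.

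The main obstacle, as in all nonlinear Landau damping proofs, is controlling the \emph{plasma echoes}: the resonant cascade in which a nonlinear interaction at frequency $\ell$ excites, at a later time $\approx k/\ell$ (heuristically), the frequency $k$, producing a potential growth in the iteration. In the confined analytic setting this is handled by the Gevrey/analytic regularity absorbing the echo growth via the CK term. In the present unconfined setting, the paper has already identified (see \S\ref{sec:Echo}) an \emph{extra} dispersive damping in \emph{frequency} that weakens the echo resonance; for Theorem~\ref{thm:main} this is what buys finite regularity. For Theorem~\ref{thm:analytic} we do not need that extra gain in full strength — analyticity alone suffices to beat the echoes, just as in \cite{MouhotVillani11} — but one must still verify that the extra dispersive factors appearing in the unconfined echo estimates are harmless (they only help) when the weight is $e^{\lambda\jap{k,kt}}$ rather than polynomial, and that the time-decay rates ($\jap{t}^{-3/2}$ for $h$ in the gliding frame, $\jap{t}^{-4}$ for $F$) are not degraded. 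I expect this verification to be routine given the machinery already built for Theorem~\ref{thm:main}: essentially one checks that every place where a factor $\jap{k,kt}^{\sigma}$ was ``spent'' can equally be fed by $e^{\lambda\jap{k,kt}}$ at the cost of an arbitrarily small decrement of $\lambda$, paid for out of the $\lambda\to\lambda'$ budget and the CK term. Hence the whole theorem reduces to a bookkeeping variation of the earlier argument, which is why the authors call it ``an easy variation.''
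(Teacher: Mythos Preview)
Your proposal is correct and is consistent with the paper's one-line indication that Theorem~\ref{thm:analytic} is ``an easy variation of the proof of Theorem~\ref{thm:main} using some basic ideas from \cite{BMM13}''; the paper gives no further proof. Your route---a single time-decreasing radius $\lambda(t)$ producing a CK-type good term that absorbs the commutator losses---is the classical \cite{BMM13} device and will certainly close.

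That said, the most literal ``variation'' of the paper's own argument is slightly different and arguably simpler here. The proof of Theorem~\ref{thm:main} is built on a \emph{fixed} hierarchy $\sigma_1<\sigma_2<\sigma_3<\sigma_4$ in the five bootstrap controls \eqref{ctrl:Boot}, with the echo handled not by a CK term but by the dispersive Schur test of \S\ref{sec:Proofa}. The direct analytic analogue is therefore to replace $\jap{\cdot}^{\sigma_i}$ by $e^{\lambda_i\jap{\cdot}}$ for fixed $\lambda' = \lambda_1<\lambda_2<\lambda_3<\lambda_4<\lambda^\star$ and rerun the same five estimates; Lemmas~\ref{lem:Moment} and \ref{lem:dual} then become trivial (the kernel \eqref{def:barK} now carries $e^{-(\lambda_4-\lambda_1)\jap{k-\ell,kt-\ell\tau}}$), and the ``basic ideas from \cite{BMM13}'' enter only in upgrading Proposition~\ref{lem:LinearL2Damping} to exponential weights, which is exactly where the restriction $\lambda^\star\le\bar\lambda$ appears. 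Your CK scheme reintroduces a carefully tuned $\lambda(t)$ profile that the dispersive mechanism has already rendered unnecessary in this unconfined setting; it works, but it is doing more than the paper needs.
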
 

\begin{remark} 
  An analogue of Theorem \ref{thm:analytic} also holds for Gevrey
  initial data (see \cite{BMM13} for the Vlasov-Poisson systems with
  Coulomb-Newton potentials on $\Torus^d \times \Real^d$ with Gevrey
  data).
\end{remark}

\subsection{Plasma echoes and dispersion in frequency} \label{sec:Echo} 
As discussed in \cite{MouhotVillani11,BMM13}, the fundamental impediment
to nonlinear Landau damping results in finite regularity are resonances known as plasma echoes, first discovered and isolated in the experiments \cite{MalmbergWharton68}. 
During Landau damping, the force field is damped due to the transfer of $O(1)$ spatial information to small scales in the velocity distribution. 
However, mixing is time-reversible, and hence un-mixing creates (transient) growth in the force field.
This effect is essentially the same as the analogous \emph{Orr mechanism} in fluid mechanics, first identified in \cite{Orr07} (see \cite{BM13} for more information).    
A plasma echo occurs when a nonlinear effect transfers information to modes which are \emph{un-mixing}, 
as this leads to a large force field in the future when that information reaches $O(1)$ spatial scales (hence `echo').  
The plasma echo is a kind of nonlinear resonance, although associated with the transient un-mixing in the linear problem rather than a true eigenvalue.  
These echoes can chain into a cascade, as demonstrated experimentally in the Vlasov equations
\cite{MalmbergWharton68} and 2D Euler \cite{YuDriscollONeil,YuDriscoll02}. 

Mathematically, one must confront the echo resonance when attempting to close an estimate such as \eqref{ineq:HiFreqDamping}. 
During the proof of \eqref{ineq:HiFreqDamping}, one needs to get an $L^2_t L^2_k \rightarrow  L_t^2 L_k^2$ estimate on an integral operator that encodes 
the long-time interactions between the force field and the information that has already mixed (see \S\ref{sec:Proofa}). 
The primary new insight in our work is that, unlike in the confined case studied in \cite{CagliotiMaffei98,HwangVelazquez09,MouhotVillani11,BMM13} we can obtain these estimates in \emph{finite regularity}. 
 This is completely due to a dispersive mechanism which is present only in $\Real^d_x$ for $d \geq 2$
(although it is too weak in $d = 2$ for our methods); it has little
relevance to the periodically confined case $\Torus^d_x$ (although one
could imagine attempting to recover it in a large box limit )
and is quite distinct from the finite regularity results of
\cite{MR3437866,FGVG,MR3471147}.

We will capitalize on a dispersive effect in the free transport operator $\partial_t + v\cdot \grad_x$,
which on the \emph{Fourier side} is of the form $\partial_t -k \cdot \grad_\eta$.  
In order to lose a significant amount of regularity, one must chain a large number of echoes over a long period of time (see \cite{MouhotVillani11,BMM13}). Indeed, this is precisely why the finite results of \cite{MR3437866,FGVG,MR3471147} are possible: the models studied therein do not support infinite chains of echoes.  
For any spatial mode $k$, the set of possible ``resonant''frequencies $\ell$, the frequencies which can react strongly via a plasma echo with $k$, turns out to be those which are co-linear with $k$.  
Indeed if the two spatial modes are not co-linear, then the velocity information in the two modes are moving in different directions in frequency (due to the dispersive effect of $\partial_t -k \cdot \grad_\eta$) and is hence well-separated (in frequency) except for a limited amount of time.  
On the torus, the set of such resonant frequencies is of positive density in the lattice $\Integer^d$ (for
example, it suffices to consider modes which depend on only one
coordinate), whereas, in $\Real^3_x$ the set of resonant frequencies is
a one-dimensional line, and is hence a very small set.  
Spatial localization implies that information in the Fourier transform cannot concentrate on small sets, which suggests that the resonance is weaker in $\Real^3$ than in $\Torus^3$. 
This is indeed the case, as we show in \S\ref{sec:Proofa}. 
We remark that there may also be a link with the idea of space-time resonances in dispersive equations \cite{GMS12}.

\subsection{Notation and conventions} \label{sec:Note}
We denote $\Naturals = \set{0,1,2,\dots}$ (including zero) and
$\Integer_\ast = \Integer \setminus \set{0}$.  For $\xi \in \Complex$
we use $\bar{\xi}$ to denote the complex conjugate.  
We denote
\begin{align*} 
\jap{v} = \left( 1 + \abs{v}^2 \right)^{1/2}. 
\end{align*} 
We use the multi-index notation: given
$\alpha = (\alpha_1,\dots,\alpha_d) \in \Naturals^d$ and
$v = (v_1,\dots,v_d) \in \Real^d$ then
\begin{align*} 
  v^\alpha  = v^{\alpha_1}_1v^{\alpha_2}_2 \dots v^{\alpha_d}_d,
  \quad\quad\quad D_\eta^\alpha  = (i\partial_{\eta_1})^{\alpha_1} 
  \dots (i\partial_{\eta_d})^{\alpha_d}. 
\end{align*}  
We denote Lebesgue norms for $p,q \in [1,\infty]$ and $a,b \in \R^3$ as
\begin{align*}  
\norm{f}_{L_a^p L_b^q} = \left(\int_a \left(\int_b \abs{f(a,b)}^q \,
  {\rm d}b \right)^{p/q} \, {\rm d}a\right)^{1/p}
\end{align*}
and Sobolev norms (usually applied to Fourier transforms) as
\begin{align*} 
\norm{\hat{f}}^2_{H^M_\eta} = \sum_{\alpha \in \Naturals^d; \abs{\alpha} \leq M} \norm{D_\eta^\alpha \hat{f}}_{L^2_\eta}^2. 
\end{align*}
We will often use the short-hand $\norm{\cdot}_2$ for
$\norm{\cdot}_{L^2_{z,v}}$ or $\norm{\cdot}_{L^2_v}$ depending on the
context.
Finally we use the notation $f \in H^{s+}$ as short-hand to denote that $f \in H^{s+\delta}$ for all $\delta > 0$. 
Similarly, the quantity $\norm{f}_{H^{s+}}$ is meant to satisfy $\norm{f}_{H^{s+}} \lesssim_\delta \norm{f}_{H^{s+\delta}}$ for all $\delta > 0$ (where the constant in general depends badly on $\delta$ for $\delta \rightarrow 0$).  

For a function $g=g(z,v)$ 
we write its Fourier transform $\hat{g}_k(\eta)$ where
$(k,\eta) \in \R^3 \times \R^3$ with
\begin{align*} 
\hat{g}_k(\eta) := \frac{1}{(2\pi)^{3}}\int_{\R^3 \times \R^3}
  e^{-i z k - iv\eta} g(z,v) \dd z \dd v, \quad 
g(z,v) := \frac{1}{(2\pi)^{3}}\int_{\R^3 \times \R^3}
e^{i z k + iv\eta} \hat{g}_k(\eta) \dd k \dd \eta. 
\end{align*} 
We use an analogous convention for Fourier transforms to functions of
$x$ or $v$ alone. With these conventions we have the following
relations:
\[
\begin{cases}\dss
\int_{\R^3 \times \R^3} g(z,v) \overline{g}(z,v) \dd z \dd v  \dss =
\int_{\R^3 \times \R^3}
\hat{g}(k,\eta) \overline{\hat{g}}(k,\eta) \dd k\dd \eta, \vsp \\[-0.1cm] \dss
\widehat{g^1g^2}  = \dss \frac{1}{(2\pi)^{3}}\widehat{g^1} \ast \widehat{g^2},
\vsp \\[0.1cm] \dss
(\widehat{\grad g})(k,\eta) = (ik,i\eta)\hat{g}(k,\eta),
\vsp  \\[0.2cm] \dss
(\widehat{v^\alpha g})(k,\eta) = D_\eta^\alpha \hat{g}(k,\eta). 
\end{cases}
\]
By convention, we use Greek letters such as $\eta$ and $\xi$ to denote velocity frequencies and lowercase Latin characters such as $k$ and $\ell$ to denote spatial frequencies.

We use the notation $f \lesssim g$ when there exists a constant
$C > 0$ independent of the parameters of interest such that
$f \leq Cg$ (we analogously define $f \gtrsim g$).  Similarly, we use
the notation $f \approx g$ when there exists $C > 0$ such that
$C^{-1}g \leq f \leq Cg$.  We sometimes use the notation
$f \lesssim_{\alpha} g$ if we want to emphasize that the implicit
constant depends on some parameter $\alpha$.

\section{Outline of the proof} 

\subsection{Local-in-time well-posedness}

The following standard lemma provides local existence of a classical
solution which remains classical as long as a suitable Sobolev norm
remains finite.  The propagation of regularity can be proved by a
variant of the arguments in, e.g. \cite{LevermoreOliver97}, along with the
inequality
\begin{align}
  \norm{B(t,\grad_x,\grad_x t)\rho(t)}_2 \lesssim 
  \sum_{\alpha \leq M} \norm{v^\alpha B(t,\grad_x,\grad_v)h(t)}_2,  
\end{align} 
for all Fourier multipliers $B$ and all integers $M > 3/2$. 

\begin{lemma}[Local existence and propagation of
  regularity] \label{lem:loctheory} 
Let $M > 3/2$ be an integer and
  $h_{\mbox{{\scriptsize {\em in}}}} \in H^{\alpha}_M$ for $\alpha > 4$.
  Then there exists some $T_0 > 0$ such that for all $T < T_0$, there
  exists a unique solution $g(t) \in C([0,T];H^{\alpha}_M)$ to
  \eqref{def:VPEgliding} on $[0,T]$.  Moreover, if for some
  $T \leq T_0$ and $\sigma^\prime$ with
  $\sigma \geq \sigma^\prime > 4$, there holds
  $\limsup_{t \nearrow T} \norm{g(t)}_{H^{\sigma^\prime}_{M}} <
  \infty$, then $T < T_0$.
\end{lemma}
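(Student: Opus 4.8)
The plan is to run the classical quasilinear scheme for transport equations — a contraction argument for local existence and uniqueness, followed by an a priori energy estimate that yields the continuation criterion — following \cite{LevermoreOliver97}, exploiting the Hamiltonian structure of \eqref{def:VPEgliding} and the smoothing of the force. The crucial structural observation is that in the gliding frame the characteristic vector field $(z,v) \mapsto (-tF(t,z+tv),\, F(t,z+tv))$ is divergence-free in $(z,v)$: indeed $\grad_v - t\grad_z$ annihilates any function of $z+tv$, so \eqref{def:VPEgliding} can be put in conservation form $\partial_t g + (\grad_v - t\grad_z)\cdot\bigl(F(t,z+tv)\,g\bigr) = -F(t,z+tv)\cdot\grad_v f^0$. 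Hence the flow preserves Lebesgue measure on phase space, and in any energy estimate the genuinely top-order term, in which no derivative falls on $F(t,z+tv)$, integrates to zero.

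For existence and uniqueness I would iterate: given $g^{(n)} \in C([0,T];H^\alpha_M)$, form the associated density $\rho^{(n)}$ (on the Fourier side $\widehat{\rho^{(n)}}(t,k)$ is, up to a constant, $\widehat{g^{(n)}}_k(kt)$), then $F^{(n)} = -\grad_x W \ast_x \rho^{(n)}$, and solve the resulting linear transport equation for $g^{(n+1)}$ by the measure-preserving method of characteristics. Here $F^{(n)}$ gains one $x$-derivative over $\rho^{(n)}$ since $\abs{\widehat W(k)} \lesssim \jap{k}^{-2}$, and $\rho^{(n)}$ is controlled in $H^\alpha_x$ by $\norm{g^{(n)}}_{H^\alpha_M}$ via the trace-type inequality stated just before Lemma~\ref{lem:loctheory} (which uses $M > 3/2$). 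The standard Moser and commutator product estimates, combined with the vanishing of the top-order term noted above, give a tame bound $\frac{\mathrm d}{\mathrm dt}\norm{g^{(n+1)}(t)}_{H^\alpha_M} \lesssim (1+T)\,P\bigl(\norm{g^{(n)}(t)}_{H^\alpha_M}\bigr)$ uniformly on $[0,T]$ — the factor $t$ in $t\grad_z$ being harmless on a bounded interval — whence a uniform bound $\sup_{[0,T]}\norm{g^{(n)}}_{H^\alpha_M} \le 2\norm{h_{\mbox{{\scriptsize in}}}}_{H^\alpha_M}$ for $T$ small. A second, low-norm estimate shows $\{g^{(n)}\}$ is Cauchy in $C([0,T];L^2_{z,v})$; interpolating against the high-norm bound gives convergence in $C([0,T];H^{\alpha'}_M)$ for every $\alpha' < \alpha$, and a routine compactness-plus-lower-semicontinuity (or Bona--Smith) argument upgrades the limit to $g \in C([0,T];H^\alpha_M)$. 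Uniqueness is the same Gr\"onwall estimate in $L^2_{z,v}$ applied to the difference of two solutions.

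For the continuation criterion I would extract from the same computation a \emph{tame} estimate in which the top-order norm grows only at a rate set by a fixed low-order norm: in $\frac{\mathrm d}{\mathrm dt}\norm{g}^2_{H^\alpha_M}$, all the surviving terms (those in which $F(t,z+tv)$ is differentiated) put at most $\sigma'$ derivatives on $F$, and these $x$-derivatives of $F$ are controlled, via $\abs{\widehat W}\lesssim\jap{k}^{-2}$, the trace inequality and Sobolev embedding, by $\norm{g}_{H^{\sigma'}_M}$; this is precisely where $\sigma' > 4$ enters, so that $H^{\sigma'}_M$ dominates the $W^{1,\infty}$-type norms of $F(t,z+tv)$ that occur (the embedding threshold is pushed above the naive value because $x$-derivatives of $F$, when re-expressed along the submersion $(z,v)\mapsto z+tv$, and the velocity-moment weights $v^\alpha$, which do not commute with $\grad_v$, cost extra regularity). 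This yields $\frac{\mathrm d}{\mathrm dt}\norm{g(t)}_{H^\alpha_M} \lesssim (1+t)^{C}\bigl(1+\norm{g(t)}_{H^{\sigma'}_M}\bigr)\norm{g(t)}_{H^\alpha_M}$, so if $\limsup_{t\nearrow T}\norm{g(t)}_{H^{\sigma'}_M} < \infty$ then $\norm{g(t)}_{H^\alpha_M}$ stays bounded up to $T$; applying the local existence statement with datum $g(T^-)$ extends the solution beyond $T$, so $T$ is not the maximal time, i.e. $T < T_0$.

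The main obstacle is the apparent loss of one derivative in the transport term $F\cdot(\grad_v - t\grad_z)g$ — naively $\grad_v g$ is rougher than $g$, and in the gliding frame $x$-derivatives of $F(t,z+tv)$ cannot be absorbed into any $L^2$-based norm on $\Real^6$. Everything above rests on resolving this via the divergence-free/Hamiltonian structure, so that the genuinely top-order contribution cancels and only commutator terms remain, in which $F$ is differentiated at most $\sigma' < \alpha$ times and hence controlled through the $\jap{k}^{-2}$ smoothing and the trace inequality by a controlled norm of $g$; once this tame estimate is in hand, the iteration, uniqueness, propagation of moment weights, and the $t$-weights are all routine.
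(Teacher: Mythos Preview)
Your proposal is correct and follows essentially the same approach the paper indicates: the paper does not give a detailed proof but simply labels the lemma as ``standard'' and refers to a variant of the arguments in \cite{LevermoreOliver97} together with the trace-type inequality $\norm{B(t,\grad_x,\grad_x t)\rho(t)}_2 \lesssim \sum_{\abs{\alpha}\le M}\norm{v^\alpha B(t,\grad_x,\grad_v)h(t)}_2$, which is exactly the control of $\rho$ (hence $F$) by $\norm{g}_{H^\alpha_M}$ that you invoke. Your explicit identification of the divergence-free structure $(\grad_v - t\grad_z)\cdot F(t,z+tv) = 0$ as the mechanism killing the top-order term, and your tame estimate for the continuation criterion, are precisely the details the paper leaves implicit in the citation.
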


\begin{remark} 
Finite energy, strong solutions are well-known to be global in time in $\Torus^3_x \times \Real^3_v$ or on $\Real^3_x \times \Real^3_v$ if there is no homogeneous background \cite{Schaeffer91,Pfaffelmoser92,LP91,Horst93,Benachour89} however, to the authors' knowledge, there is no global existence theory which covers the entire range of Theorem \ref{thm:main}. However, Theorem \ref{thm:main} shows that in the perturbative regime, solutions are global. 
\end{remark} 

\subsection{Coordinate shift} 
As the solution in Theorem \ref{thm:main} is asymptotic to free
transport it makes sense to begin (as in \cite{CagliotiMaffei98,HwangVelazquez09,BMM13}) by modding out
by this evolution:
\begin{subequations} \label{eq:zf}
\begin{align} 
z &:= x-tv \\ 
g(t,z,v) & := h(t,z+tv,v). 
\end{align}
\end{subequations}
From \eqref{eq:zf} and \eqref{def:VPE} we derive the system
\begin{equation}\label{def:VPEgliding}
\left\{
\begin{array}{l}
\partial_t g + F(t,z+vt) \cdot (\grad_v -t\grad_z)g + F(t,z+vt)\cdot \grad_vf^0 = 0, \\[2mm]
g(t = 0,z,v) = h_{\mbox{{\scriptsize in}}}(z,v) \\[2mm] 
\hat{\rho}(t,k) =  \hat g(t,k,kt). 
\end{array}
\right.
\end{equation} 
As in \cite{BMM13}, we derive from \eqref{def:VPEgliding} the following system on the Fourier side, 
\begin{subequations} \label{def:rhof}
\begin{align} 
  \partial_t  \hat g(t,k,\eta) & = 
-\hat{\rho}(t,k) \widehat{W}(k) k \cdot (\eta - tk) \hat{f}^0(\eta - kt) \nonumber \\ & \quad - \int_{\ell} \hat{\rho}(t,\ell)\widehat{W}(\ell)\ell\cdot \left[ \eta  - tk\right] \hat g(t,k-\ell,\eta - t\ell)  \dd\ell \label{eq:feqn} \\
  \hat{\rho}(t,k) &= \hat f_{\mbox{{\scriptsize in}}}(k,kt)   - \int_0^t\hat{\rho}(\tau,k) \widehat{W}(k)k \cdot k(t - \tau) f^0\left(k(t-\tau)\right) \dd\tau \nonumber \\ & \quad
                                                                                                                                                                            - \int_0^t \int_{\ell} \hat{\rho}(\tau,\ell)\widehat{W}(\ell)\ell \cdot k\left(t-\tau\right) \hat g(\tau,k-\ell,kt - \tau \ell)  \dd\ell \dd\tau. \label{eq:rhointegral}
\end{align} 
\end{subequations}

\subsection{Linear Landau damping in $\Real^3_x \times \Real^3_v$} \label{sec:Linear}
The first step in proving Theorem \ref{thm:main} is understanding the linear term in \eqref{eq:rhointegral}.
In particular, we need estimates on the linear Volterra equation 
\begin{align}
  \phi(t,k)= H(t,k) + \int_0^tK^0(t-\tau,k) \phi(\tau,k) \, \dd\tau, \label{eq:volterra}
\end{align}
where $K^0(t,k) := -\hat{f^0}\left(kt\right)\widehat{W}(k)\abs{k}^2t$ and $H(t,k)$ has sufficiently rapid decay. 
Recall that by definition, $\mathcal{L}$ is the Fourier-Laplace transform of the kernel $K^0$:  
\begin{align}
\mathcal{L}(\xi,k) = \int_0^\infty e^{\bar{\xi}t}K^0(t,k) \dd t = -\int_0^\infty e^{ \bar{\xi} t} \abs{k}^2 t \widehat{W}(k) \widehat{f^0}(kt) \dd t. \label{def:L}
\end{align}

We begin by proving that \textbf{(L)} implies Landau damping for
\eqref{eq:volterra}. See Appendix \ref{apx:lin} for the proof, which
is a variation of the arguments in \cite{MouhotVillani11,BMM13}.

\begin{proposition}[Linear $L^2_t$ control] \label{lem:LinearL2Damping} 
  Let $f^0$ satisfy the condition \textbf{(L)} with constants $C_0,\kappa > 0$. Let $\alpha$, be arbitrary and $s \geq 0$ be an arbitrary integer.  
   Let $H(t,k)$ and $T^\star > 0$ be given such that, if we denote $I = [0,T^\star)$ then we take $H(t,k) = 0$ for $t > T_\star$ and
  \begin{align*} 
   \norm{ \abs{k}^{\alpha}\jap{k,kt}^s H(t,k)}_{L_t^2(I)}^2 < \infty.
\end{align*} 
 Then there exists a constant $C_{LD} = C_{LD}(C_0,s,\bar{\sigma},\kappa)$ such
that the solution $\phi(t,k)$ to the system \eqref{eq:volterra} satisfies the pointwise-in-$k$ estimate, 
\begin{align}
\norm{ \abs{k}^{\alpha} \jap{k,kt}^s \phi(t,k)}_{L_t^2(I)} \leq C_{LD} \norm{ \abs{k}^{\alpha} \jap{k,kt}^s H(t,k)}_{L_t^2(I)}^2.  \label{ineq:LinearCtrl}
\end{align} 
\end{proposition}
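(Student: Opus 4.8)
The plan is to treat \eqref{eq:volterra}, for each fixed spatial frequency $k$, as a scalar convolution Volterra equation in $t$, and to reduce \eqref{ineq:LinearCtrl} to a single uniform-in-$k$ bound on the associated resolvent kernel. Since $H(t,k)=0$ for $t>T^\star$, we lose nothing by regarding \eqref{eq:volterra} as posed on all of $[0,\infty)$: the solution $\phi(\cdot,k)$ then extends to $[0,\infty)$ still solving the equation (agreeing with the original solution on $I$ by uniqueness), so it suffices to control its weighted $L^2_t([0,\infty))$ norm, and $\norm{H(\cdot,k)}_{L^2_t([0,\infty))}=\norm{H(\cdot,k)}_{L^2_t(I)}$. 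Let $R(t,k)$ be the resolvent kernel, i.e. the solution of $R=K^0+K^0\ast R$ (convolution in $t$); by the variation-of-constants formula $\phi(t,k)=H(t,k)+\int_0^t R(t-\tau,k)H(\tau,k)\,d\tau$. The factor $\abs{k}^\alpha$ is a fixed, $t$-independent multiplier appearing identically on both sides of \eqref{ineq:LinearCtrl} and may be dropped. For the weight $\jap{k,kt}^s$ we use $\jap{k,kt}^s\lesssim_s\jap{k(t-\tau)}^s\jap{k,k\tau}^s$ (from $(k,kt)=(k,k\tau)+(0,k(t-\tau))$ and sub-multiplicativity of $\jap{\cdot}$) to transfer the weight onto the kernel, so that $\jap{k,kt}^s\abs{\phi(t,k)}\lesssim_s\jap{k,kt}^s\abs{H(t,k)}+\int_0^t\jap{k(t-\tau)}^s\abs{R(t-\tau,k)}\,\jap{k,k\tau}^s\abs{H(\tau,k)}\,d\tau$; by Young's inequality \eqref{ineq:LinearCtrl} then follows from the uniform resolvent bound
\begin{equation*}
\sup_{k}\ \int_0^\infty \jap{kt}^{s}\,\abs{R(t,k)}\,dt \ \le\ C(C_0,s,\bar\sigma,\kappa)<\infty. \tag{$\star$}
\end{equation*}

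To prove $(\star)$ take $k\ne0$ (for $k=0$, $K^0(\cdot,0)\equiv0$ and $\phi(\cdot,0)=H(\cdot,0)$). Rescaling $t'=\abs{k}t$ converts $K^0(\cdot,k)$ into $\mathcal{K}(t',k):=\abs{k}^{-1}K^0(t'/\abs{k},k)=-\widehat{W}(k)\,t'\,\widehat{f^0}(t'\hat k)$, whose resolvent is $\mathcal{R}(t',k)=\abs{k}^{-1}R(t'/\abs{k},k)$; since $\jap{kt}=\jap{t'}$ under this change of variable, $(\star)$ is equivalent to $\sup_k\int_0^\infty\jap{t'}^{s}\abs{\mathcal{R}(t',k)}\,dt'\le C(C_0,s,\bar\sigma,\kappa)$, i.e. to a resolvent bound that is uniform over directions $\hat k$ and over the scalar $\widehat{W}(k)\in(0,C]$. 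Two facts feed into this. First, the Fourier--Laplace transform of $\mathcal{K}(\cdot,k)$ is precisely $\mathcal{L}(\xi,k)$ (with $\xi=\abs{k}\zeta$), so \textbf{(L)} gives the uniform lower bound $\inf_{\mathrm{Re}\,\zeta\le0}\abs{1-\widetilde{\mathcal{K}}(\zeta,k)}\ge\kappa$. Second, \eqref{ineq:f0Loc} together with $\bar\sigma>3/2$, $M>3/2$ gives, via Sobolev embedding applied to $\jap{\eta}^{\bar\sigma}\widehat{f^0}\in H^M(\Real^3)$, the pointwise decay $\abs{\widehat{f^0}(\eta)}\lesssim_{C_0}\jap{\eta}^{-\bar\sigma}$; hence, restricting to the ray through $\hat k$ and using $\abs{\widehat{W}(k)}\lesssim1$, one gets $\int_0^\infty\jap{t'}^{s}\abs{\mathcal{K}(t',k)}\,dt'\lesssim_{C_0}\int_0^\infty\jap{t'}^{s+1-\bar\sigma}\,dt'$, which is finite and uniformly bounded provided $\bar\sigma$ exceeds $s$ by a fixed amount (this is the source of the $\bar\sigma$-dependence of $C_{LD}$, and is satisfied in all applications of the proposition).

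With these two facts, $(\star)$ asserts that the Volterra resolvent $\mathcal{R}(\cdot,k)$ lies (causally) in the weighted convolution algebra $\{g:\int_0^\infty\jap{t}^{s}\abs{g}\,dt<\infty\}$, with norm controlled only by $\int_0^\infty\jap{t'}^{s}\abs{\mathcal{K}(\cdot,k)}\,dt'$ and $\kappa$. The unweighted version — $\mathcal{R}(\cdot,k)\in L^1$, causal, with $\norm{\mathcal{R}(\cdot,k)}_{L^1}\le C(\kappa,\norm{\mathcal{K}(\cdot,k)}_{L^1})$ — is the Paley--Wiener theorem for Volterra equations: from $1+\widetilde{\mathcal{R}}=(1-\widetilde{\mathcal{K}})^{-1}$, the bound $\abs{1-\widetilde{\mathcal{K}}}\ge\kappa$ over $\{\mathrm{Re}\,\zeta\le0\}$ (with $\widetilde{\mathcal{K}}\to0$ at infinity) produces a quantitative bounded inverse, and the half-plane analyticity produces causality of $\mathcal{R}$. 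To bring in the weight one induces on $j=0,1,\dots,s$: from $\mathcal{R}=\mathcal{K}+\mathcal{K}\ast\mathcal{R}$ and $(t')^j=\big((t'-\tau')+\tau'\big)^j$ one obtains $(\delta-\mathcal{K}\ast)\big((t')^j\mathcal{R}\big)=(t')^j\mathcal{K}+\sum_{i=1}^{j}\binom{j}{i}\big((t')^i\mathcal{K}\big)\ast\big((t')^{j-i}\mathcal{R}\big)$ (with $\delta$ the convolution unit), inverts using $(\delta-\mathcal{K}\ast)^{-1}=\delta+\mathcal{R}\ast$, and closes the induction in $L^1$ via $\norm{(t')^i\mathcal{K}}_{L^1}\le\int_0^\infty\jap{t'}^{s}\abs{\mathcal{K}}\,dt'$; every constant then depends only on $C_0,s,\bar\sigma,\kappa$, which gives $(\star)$ and hence \eqref{ineq:LinearCtrl}.

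The main obstacle is exactly the uniformity as $\abs{k}\to0$: at low spatial frequency no smallness of $\norm{K^0(\cdot,k)}_{L^1}$ is available — this is precisely the feature distinguishing the $L^1$-type potentials \eqref{def:W} from Coulomb, and the reason \textbf{(L)} cannot be dispensed with — so one cannot merely iterate, but must invert $1-\widetilde{\mathcal{K}}$ quantitatively through \textbf{(L)}, uniformly over all directions $\hat k$. The rescaling $t'=\abs{k}t$ is what renders this tractable, collapsing the two-parameter family in $k$ to a one-parameter family in $\hat k$ multiplied by the harmless scalar $\widehat{W}(k)$; checking that all constants depend only on $(C_0,s,\bar\sigma,\kappa)$, and in particular not on $T^\star$, is then bookkeeping. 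The details follow \cite{MouhotVillani11,BMM13} and are carried out in Appendix \ref{apx:lin}.
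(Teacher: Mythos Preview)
Your approach is correct but proceeds differently from the paper's. The paper works directly with $\phi$ and $H$: it takes the Fourier transform in time, writes $(1-\mathcal{L})\tilde\Phi=\tilde{H'}$ and inverts via the stability condition, then differentiates $\beta$ times in $\omega$ and applies Plancherel to convert $\partial_\omega^\beta$ into the time-weight $|kt|^\beta$; the $\omega$-derivatives of $\mathcal{L}$ are controlled by Lemma~\ref{lem:Lctrl}, and an induction on $\beta$ closes the estimate entirely in $L^2$. You instead pass through the resolvent kernel $R$, use the variation-of-constants formula and Young's inequality, and reduce everything to a weighted $L^1_t$ bound on $R$; the rescaling $t'=|k|t$ compactifies the $k$-dependence, and the weighted $L^1$ bound is then built up by the induction $(\delta-\mathcal{K}\ast)\big((t')^j\mathcal{R}\big)=\cdots$ from the base case $\mathcal{R}\in L^1$.

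Both routes rest on the same two inputs (the lower bound $|1-\mathcal{L}|\ge\kappa$ and moment bounds on $K^0$) and are comparable in length. Your approach yields a bit more --- the uniform weighted $L^1$ resolvent bound $(\star)$ is a useful object in its own right, and variants of this resolvent formulation appear elsewhere in the Landau damping literature --- whereas the paper's $L^2$/Plancherel route is slightly more self-contained. One place where your sketch could be tightened: the base case $\|\mathcal{R}\|_{L^1}\le C(\kappa,\|\mathcal{K}\|_{L^1})$ that you label ``Paley--Wiener'' is only qualitative in its classical form; making it quantitative uniformly in $k$ requires either a compactness argument over the reduced parameter space $(\hat k,\widehat W(k))$ (which your rescaling sets up nicely) or a short Fourier computation using the extra moment $\int\jap{t'}|\mathcal{K}|\,dt'<\infty$ to get $\jap{t'}\mathcal{R}\in L^2$ and hence $\mathcal{R}\in L^1$ by Cauchy--Schwarz --- essentially the paper's $\omega$-derivative step applied to $\mathcal{R}$ rather than to $\phi$. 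This is not a gap, just a point where the black-box citation hides a small amount of work.
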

\begin{remark} 
As long as condition \textbf{(L)} is satisfied, there is no difference between $x \in \Torus^d$ and $x \in \Real^d$ for the purposes of Proposition \ref{lem:LinearL2Damping}. 
In \cite{Glassey94,Glassey95}, the convergence rates are degraded due to the lack of \textbf{(L)}.  
\end{remark} 
\begin{remark} 
In fact, Proposition \ref{lem:LinearL2Damping}  holds for any $s \in \Real_+$, however, the integer case is simpler. Once the integer case is solved, a decomposition argument based on almost-orthogonality is applied to reach fractional $s$; see e.g. \cite{BMM13} for an analogous argument (although the finite regularity setting is easier). 
\end{remark} 

It is important to discuss how restrictive the linear stability condition \textbf{(L)} is. 
The proofs can be found in Appendix \ref{apx:lin}.  
The first observation is that a smallness condition on the interaction is sufficient to imply stability (this follows immediately from Lemma \ref{lem:Lctrl}).  
\begin{proposition} \label{prop:small}
There exists a universal $c > 0$ such that if $\norm{W}_{L^1}\norm{f^0}_{H^{3/2+}_{2}} < c$, then $f^0$, $W$ satisfy the linear stability condition \textbf{(L)} for some $C_0,\kappa$, and $\bar{\sigma}$. 
\end{proposition}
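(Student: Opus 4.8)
The plan is to prove the stronger statement that, under the smallness hypothesis, $\abs{\mathcal{L}(\xi,k)} \leq \tfrac12$ for every $k \in \Real^3$ and every $\xi \in \Complex$ with $\mathrm{Re}\,\xi \leq 0$; granting this, \eqref{ineq:kapL} holds with $\kappa = \tfrac12$, and \eqref{ineq:f0Loc} is automatic once we fix some $\bar\sigma > 3/2$ and $M = 2$ and set $C_0 := \norm{f^0}_{H^{\bar\sigma}_2}$. For $k = 0$ the factor $\abs{k}^2$ in \eqref{ineq:mL} makes $\mathcal{L}(\xi,0) = 0$, so suppose $k \neq 0$. Using $\abs{e^{\bar\xi t}} = e^{(\mathrm{Re}\,\xi)t} \leq 1$ for $t \geq 0$, the trivial bound $\abs{\widehat W(k)} \leq (2\pi)^{-3}\norm{W}_{L^1}$, and the substitution $s = \abs{k}t$, one obtains from \eqref{ineq:mL}
\[
\abs{\mathcal{L}(\xi,k)} \;\leq\; (2\pi)^{-3}\norm{W}_{L^1}\,\abs{k}^2\!\int_0^\infty t\,\bigl|\widehat{f^0}(kt)\bigr|\dd t \;=\; (2\pi)^{-3}\norm{W}_{L^1}\!\int_0^\infty s\,\bigl|\widehat{f^0}(s\omega)\bigr|\dd s, \qquad \omega := \tfrac{k}{\abs{k}}.
\]
Thus the whole proposition reduces to the direction-uniform estimate $\Phi := \sup_{\abs{\omega}=1}\int_0^\infty s\,|\widehat{f^0}(s\omega)|\dd s \lesssim_{\bar\sigma} \norm{f^0}_{H^{\bar\sigma}_2}$, which is the quantitative content behind Lemma~\ref{lem:Lctrl}.

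I would prove this estimate by Cauchy--Schwarz against the weight $\jap{s}^{-\bar\sigma}$ followed by a one-dimensional restriction (trace) estimate along the ray $\Real_+\omega$:
\[
\int_0^\infty s\,\bigl|\widehat{f^0}(s\omega)\bigr|\dd s \;\leq\; \Bigl(\int_0^\infty s^2\jap{s}^{-2\bar\sigma}\dd s\Bigr)^{1/2}\Bigl(\int_0^\infty \jap{s}^{2\bar\sigma}\bigl|\widehat{f^0}(s\omega)\bigr|^2\dd s\Bigr)^{1/2}.
\]
The first factor is finite exactly because $2\bar\sigma - 2 > 1$, i.e. $\bar\sigma > 3/2$. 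For the second, note that $\jap{s}^{\bar\sigma}\widehat{f^0}(s\omega) = \jap{s\omega}^{\bar\sigma}\widehat{f^0}(s\omega)$ is the restriction to the line $\Real\omega$ of $g(\eta) := \jap{\eta}^{\bar\sigma}\widehat{f^0}(\eta)$; since restriction of an $H^2(\Real^3)$ function to a line is bounded into $L^2(\Real)$ with operator norm independent of the direction of the line (by rotation invariance, i.e. integrating out the two transverse frequencies), we get $\int_0^\infty \jap{s}^{2\bar\sigma}|\widehat{f^0}(s\omega)|^2\dd s \lesssim \norm{g}_{H^2(\Real^3_\eta)}^2$ uniformly in $\omega$. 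Finally, differentiating $\widehat{f^0}$ in $\eta$ corresponds to multiplying $f^0$ by $v$, so $\norm{g}_{H^2(\Real^3_\eta)} \lesssim \norm{\jap{v}^2\jap{\grad_v}^{\bar\sigma}f^0}_{L^2_v} \lesssim \norm{f^0}_{H^{\bar\sigma}_2}$, using the equivalence of norms (moments and derivatives commuting up to constants) recorded in the excerpt. Chaining the three displays gives $\Phi \lesssim_{\bar\sigma} \norm{f^0}_{H^{\bar\sigma}_2}$, hence $\abs{\mathcal{L}(\xi,k)} \leq C_\star\norm{W}_{L^1}\norm{f^0}_{H^{\bar\sigma}_2}$ for a constant $C_\star = C_\star(\bar\sigma)$; taking the threshold $c := (2C_\star)^{-1}$ forces $\abs{\mathcal{L}(\xi,k)} \leq \tfrac12$, which closes the argument as above.

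The step that needs genuine care is the restriction estimate. The naive pointwise bound $|\widehat{f^0}(\eta)| \lesssim \jap{\eta}^{-\bar\sigma}$, which does follow from $f^0 \in H^{\bar\sigma}_2$, is insufficient here because $\int_0^\infty s\jap{s}^{-\bar\sigma}\dd s$ diverges once $\bar\sigma$ is near $3/2$; one must instead use the $L^2$-averaged (over the ray) form, and verify that its constant does not degenerate as the direction $\omega$ varies. This is precisely the place where the hypothesis $f^0 \in H^{3/2+}_2$, rather than merely $f^0 \in H^{3/2}_2$, is used in a sharp way, and it is also why the decay rate $\jap{k,kt}^{-3/2}$ appears naturally in the linear theory at low frequencies.
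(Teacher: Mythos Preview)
Your proof is correct and follows essentially the same approach as the paper. The paper simply remarks that the proposition ``follows immediately from Lemma~\ref{lem:Lctrl}'', and what you have written is precisely the $j=0$ case of that lemma's proof, carried out in full detail: the substitution $s=\abs{k}t$, Cauchy--Schwarz in $s$, and the Sobolev trace estimate (Lemma~\ref{lem:SobTrace}) for the restriction of $\jap{\eta}^{\bar\sigma}\widehat{f^0}(\eta)$ to the ray $\Real_+\omega$, followed by the identification $\norm{\jap{\cdot}^{\bar\sigma}\widehat{f^0}}_{H^2_\eta}\approx\norm{f^0}_{H^{\bar\sigma}_2}$.
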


As discussed above, if one takes the interaction potential $W(x) = \mu\abs{x}^{-1}$ for any $\mu \in \Real$, then \textbf{(L)} fails for every equilibrium considered here \cite{Glassey94,Glassey95}. 
However, the screened Coulomb law \eqref{def:WscrnC} does not have this problem. 
Indeed, we have linear stability for all $\alpha > 0$. 
\begin{proposition} \label{prop:electrostatic}
Let $W$ be \eqref{def:WscrnC}, the fundamental solution to \eqref{def:screenCoulomb}. 
Then for any strictly positive, radially symmetric equilibrium $f^0 \in H^{3/2+}_2$ with $\abs{f^0(v)} + \abs{\grad f^0(v)} \lesssim \jap{v}^{-4}$ for $\abs{v}$ large and all $\alpha > 0$, $W$ and $f^0$ satisfy \textbf{(L)} for some constants $C_0,\kappa,$ and $\bar{\sigma}$. In fact, the same applies to any potential $W$ satisfying 
\begin{align*}
0 \leq \widehat{W}(k) \lesssim \jap{k}^{-2}. 
\end{align*}
\end{proposition}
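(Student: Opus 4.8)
The plan is to reduce the three-dimensional stability condition \textbf{(L)} to a one-dimensional Penrose-type criterion along the line $\Real\hat k$ ($\hat k := k/\abs{k}$), to exploit that radial symmetry forces the relevant one-dimensional marginal to be \emph{monotone}, and then to upgrade the resulting pointwise statement $\mathcal{L}(\xi,k)\neq 1$ to the uniform bound \eqref{ineq:kapL} by a compactness argument. Writing $f^0(v)=F(\abs{v})$ and $g(w):=\int_{\Real^2}f^0(w\hat k+v_\perp)\,\dd v_\perp$, radial symmetry yields the explicit formula $g(w)=2\pi\int_{\abs{w}}^\infty F(u)\,u\,\dd u$, so $g$ is even, $g\geq 0$, and $g'(w)=-2\pi\,\mathrm{sgn}(w)\,F(\abs{w})\,\abs{w}$; since $F>0$ (strict positivity of $f^0$) this makes $g$ \emph{strictly decreasing} on $(0,\infty)$. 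The decay hypotheses on $f^0$ and $\grad f^0$, together with $f^0\in H^{3/2+}_2$, make $g$ a $C^1$ function with H\"older-continuous, integrable derivative satisfying $\abs{g'(w)}\lesssim\jap{w}^{-3}$, and guarantee $\sup_{\abs{\hat k}=1}\int_0^\infty s\,\abs{\widehat{f^0}(s\hat k)}\,\dd s<\infty$ (via a trace/restriction estimate on lines, uniform in $\hat k$). Since $\widehat{f^0}(kt)$ is, up to a fixed constant, the one-dimensional Fourier transform of $g$ evaluated at $\abs{k}t$, substituting $s=\abs{k}t$, performing the $t$-integral, and integrating by parts once in $w$ (boundary terms vanish since $g\to 0$) gives, for $\mathrm{Re}\,\xi<0$,
\begin{align*}
\mathcal{L}(\xi,k)=\widehat W(k)\,G\!\left(\bar\xi/\abs{k}\right),\qquad G(\zeta):=\frac{i}{(2\pi)^3}\int_{\Real}\frac{g'(w)}{iw-\zeta}\,\dd w\quad(\mathrm{Re}\,\zeta\leq 0),
\end{align*}
where the scalar factor $\widehat W(k)\geq 0$ is nonnegative by \eqref{def:WscrnC} and $\mathcal{L}(\xi,0)=0$. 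The essential structural gain is that $G$ depends on $(\xi,k)$ only through $\zeta$.

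The next step is to prove $\mathcal{L}(\xi,k)\neq 1$ for all $\mathrm{Re}\,\xi\leq 0$ and all $k$. Writing $\zeta=-a+ic$ with $a\geq 0$, one computes for $a>0$
\begin{align*}
\mathrm{Re}\,G(\zeta)=\frac{1}{(2\pi)^3}\int_{\Real}\frac{g'(w)(w-c)}{a^2+(w-c)^2}\,\dd w,\qquad \mathrm{Im}\,G(\zeta)=\frac{a}{(2\pi)^3}\int_{\Real}\frac{g'(w)}{a^2+(w-c)^2}\,\dd w,
\end{align*}
with the natural Sokhotski--Plemelj modification when $a=0$ (valid since $g'$ is H\"older with decay). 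Integrating the $\mathrm{Im}\,G$ integral by parts and substituting $w'=w-c$ makes the odd kernel $w'/(a^2+w'^2)^2$ act on the shifted even function $g$; because $g$ is non-increasing in $\abs{\cdot}$, this forces $\mathrm{sgn}(\mathrm{Im}\,G(\zeta))=-\mathrm{sgn}(c)$, and by strict monotonicity $\mathrm{Im}\,G(\zeta)\neq 0$ whenever $c\neq 0$. When $c=0$ we get $\mathrm{Im}\,G=0$ and $\mathrm{Re}\,G(\zeta)=\tfrac{2}{(2\pi)^3}\int_0^\infty\frac{g'(w)\,w}{a^2+w^2}\,\dd w<0$ (resp.\ $\tfrac{2}{(2\pi)^3}\int_0^\infty g'(w)/w\,\dd w<0$ on the boundary $a=0$), since $g'<0$ on $(0,\infty)$. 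Hence $G(\zeta)\notin[0,\infty)$ for every $\zeta$ with $\mathrm{Re}\,\zeta\leq 0$; as $\widehat W(k)\geq 0$ we conclude $\mathcal{L}(\xi,k)\notin[0,\infty)$, and in particular $\mathcal{L}(\xi,k)\neq 1$.

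To promote this to \eqref{ineq:kapL} I would compactify. The map $G$ extends continuously to $\set{\mathrm{Re}\,\zeta\leq 0}\cup\set{\infty}$ with $G(\infty)=0$, since $\abs{G(\zeta)}\lesssim\int\abs{g'(w)}\,\abs{iw-\zeta}^{-1}\,\dd w\to 0$ as $\abs{\zeta}\to\infty$ (only $g'\in L^1$ is used). By \eqref{def:W} the closure of $\set{\widehat W(k):k\in\Real^3}$ is a compact interval $[0,W_{\max}]$ (with $W_{\max}=\alpha^{-1}$ for \eqref{def:WscrnC}). The function $(w_W,\zeta)\mapsto w_W\,G(\zeta)-1$ is continuous on the compact set $[0,W_{\max}]\times(\set{\mathrm{Re}\,\zeta\leq 0}\cup\set{\infty})$ and nowhere zero: it equals $-1$ when $w_W=0$ or $\zeta=\infty$, and for $w_W>0$ with $\zeta$ finite $w_W\,G(\zeta)\notin[0,\infty)$ by the previous step. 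Hence $\abs{w_W\,G(\zeta)-1}\geq\kappa$ for some $\kappa>0$ throughout; since every relevant pair $(\widehat W(k),\bar\xi/\abs{k})$ lies in this set, \eqref{ineq:kapL} follows, and \eqref{ineq:f0Loc} is part of the hypothesis. The only property of $W$ used beyond $\widehat W\geq 0$ (in the pointwise step) is $W_{\max}<\infty$ (for compactness), which is why the argument applies verbatim to any $W$ with $0\leq\widehat W(k)\lesssim\jap{k}^{-2}$; where $\widehat W(k)=0$ one simply has $\mathcal{L}(\xi,k)=0$.

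The step I expect to be the main obstacle is the boundary analysis $\mathrm{Re}\,\xi=0$ inside the pointwise step: there the representation of $\mathcal{L}$ is singular, so one must control the sign of $\mathrm{Im}\,\mathcal{L}$ and, on the one-dimensional set where it vanishes, the sign of the resulting principal-value integral --- and it is precisely here that strict positivity of $f^0$ (hence strict monotonicity of $g$) and the regularity and decay of $\grad f^0$ are indispensable. A secondary technical point is making the reductions (Fubini, the integration by parts, continuity of $G$ up to the boundary) rigorous and, above all, establishing $\sup_{\abs{\hat k}=1}\int_0^\infty s\,\abs{\widehat{f^0}(s\hat k)}\,\dd s<\infty$ uniformly in the direction $\hat k$, which is the role of the quantitative hypotheses $f^0\in H^{3/2+}_2$ and $\abs{f^0}+\abs{\grad f^0}\lesssim\jap{v}^{-4}$.
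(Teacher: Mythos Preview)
Your argument is correct and shares the paper's core reduction: both pass to the one-dimensional marginal $g=f^0_k$ (which is direction-independent by radial symmetry), write $\mathcal{L}(\xi,k)=\widehat W(k)\cdot(\text{function of }\bar\xi/\abs{k})$, and exploit that strict positivity of $f^0$ forces $g$ to be strictly decreasing on $(0,\infty)$, so the imaginary part of $\mathcal{L}$ is nonzero whenever the relevant real parameter is nonzero, while at the origin the real part is nonpositive.

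The differences are in the endgame. The paper works only on the boundary $\xi=i\omega$ and splits into three regimes in $\omega/\abs{k}$: near $0$ it uses H\"older continuity of the Hilbert transform to keep the principal-value integral below a threshold, in an intermediate compact range it uses the nonvanishing of the imaginary part $-\pi(f^0_k)'(\omega/\abs{k})$, and for large $\omega/\abs{k}$ it expands the principal-value integral asymptotically to show $\textup{Re}\,\mathcal{L}\to 0$. You instead prove the sharper inclusion $G(\zeta)\notin[0,\infty)$ uniformly over the full closed half-plane $\textup{Re}\,\zeta\leq 0$ (not just the boundary), and replace the asymptotic expansion by compactifying at $\zeta=\infty$ and invoking continuity of $(w_W,\zeta)\mapsto w_WG(\zeta)-1$ on a compact set. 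Your route is a bit more self-contained---it delivers the half-plane statement in \textbf{(L)} directly rather than leaving the interior to an implicit analyticity/argument-principle step---while the paper's asymptotic expansion gives an explicit quantitative bound $\abs{\textup{Re}\,\mathcal{L}-1}\geq 1/2$ in the tail, which your compactness argument does not. Both are standard Penrose-criterion verifications; the difference is largely a matter of taste.
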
 
\begin{remark} 
Of course, the constant $\kappa$ in \textbf{(L)} depends badly on $\alpha$ as $\alpha \rightarrow 0$. 
\end{remark} 

\subsection{Nonlinear energy estimates} 
Next we set up the continuity argument we use to derive a uniform
bound on $g$ via the system \eqref{def:rhof}.  Define the following,
which is convenient when considering the density: for any $s > 0$,
\begin{align*}
A_s(t,k) = \abs{k}^{1/2}\jap{k, tk}^s. 
\end{align*}
We will employ the same notation for the corresponding Fourier multiplier: 
\begin{align*}
A_s(t,\grad_z) = \abs{\grad_z}^{1/2}\jap{\grad_z, t\grad_z}^s; 
\end{align*}
we hope there will be no confusion. 

Fix regularity levels $\bar{\sigma} > \sigma_4 > \sigma_3 > \sigma_2 > \sigma_1 \geq 11$ and constants $K_i \geq 1$ determined by the proof. Let $I = [0,T^\star]$ be the largest connected interval containing zero such that the following bootstrap controls hold: 
\begin{subequations} \label{ctrl:Boot}
\begin{align}
\norm{\jap{t\grad_z,\grad_v} g(t)}^2_{H^{\sigma_4}_M} & \leq 4 K_1\jap{t}^{5}\epsilon^2 \label{ctrl:HiLocalizedBoot} \\
\norm{A_{\sigma_4}\hat{\rho}}_{L^2_t L^2_k}^2 & \leq 4 K_2\epsilon^2. \label{ctrl:MidBoot}\\
\norm{\abs{\grad_z}^\delta g(t)}^2_{H^{\sigma_3}_M} & \leq 4 K_3\epsilon^2 \label{ctrl:LoLocalizedBoot} \\
\norm{A_{\sigma_2} \hat{\rho}}_{L^\infty_k L^2_t} & \leq 4 K_4\epsilon^2 \label{ctrl:MidLinftyrho} \\
\norm{\widehat{\jap{\grad}^{\sigma_1}g}}_{L^\infty_{k,\eta}} & \leq 4 K_5\epsilon^2. \label{ctrl:LowLinftyf}
\end{align}
\end{subequations}
\begin{remark} 
A close reading of the proof suggests that one can take $\sigma_i - \sigma_{i-1} = 6$ and $\bar{\sigma} - \sigma_4 = 6$, although this seems far from optimal. This technically brings the regularity requirement given by the proof to 35, however, we did not attempt to optimize this number.  
\end{remark} 

\begin{remark} \label{rmk:Ki}
The constants $K_2$ and $K_4$ are determined only by the properties of the linearized Vlasov equations (hence they depend only on $f^0$ and $W$), and the constants $K_1,K_3,K_5$ are fixed independently, depending only on $K_2$, $K_4$, and universal constants.   
\end{remark} 

\begin{remark} 
Notice the order $L^\infty_k L^2_t$ in the estimate \eqref{ctrl:MidLinftyrho}. This  norm is reminiscent of the norms used by Chemin and Lerner in  \cite{CheminLerner95}. 
\end{remark} 

\begin{proposition}[Bootstrap] \label{prop:boot}
Let \eqref{ctrl:Boot} be satisfied for all $t \in [0,T^\star]$ with $T^\star < T^0$ ($T^0$ defined in \eqref{lem:loctheory}). Then for $\epsilon$ chosen sufficiently small, the estimates \eqref{ctrl:Boot} all hold with $4$ replaced with $2$.  
\end{proposition}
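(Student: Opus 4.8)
The plan is to run a standard continuity/bootstrap argument on the system \eqref{def:rhof}, using the linear damping estimate from Proposition \ref{lem:LinearL2Damping} to absorb the linear part of \eqref{eq:rhointegral} and peeling off the nonlinear contributions with paraproduct-type decompositions. The five controls in \eqref{ctrl:Boot} are arranged hierarchically by regularity ($\sigma_4 > \sigma_3 > \sigma_2 > \sigma_1$) and the argument improves them essentially in that order, with the top-regularity density bound \eqref{ctrl:MidBoot} as the heart of the matter. First I would treat \eqref{ctrl:MidBoot}: write the Volterra equation for $A_{\sigma_4}\hat\rho$ by hitting \eqref{eq:rhointegral} with $A_{\sigma_4}(t,k)$, so that $\phi(t,k) = A_{\sigma_4}(t,k)\hat\rho(t,k)$ solves \eqref{eq:volterra} with $K^0$ as given and a forcing $H$ consisting of (i) $A_{\sigma_4}\hat f_{\mbox{{\scriptsize in}}}(k,kt)$, controlled by the data assumption, and (ii) the nonlinear reaction and transport integrals. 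By Proposition \ref{lem:LinearL2Damping} it suffices to bound $\|A_{\sigma_4} H\|_{L^2_tL^2_k}$ by $O(\epsilon^2)$ plus a small multiple of $\|A_{\sigma_4}\hat\rho\|_{L^2_tL^2_k}$; the pointwise-in-$k$ nature of Proposition \ref{lem:LinearL2Damping} is exactly what lets us then take the $L^2_k$ norm.

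The core estimate is the bilinear one for the nonlinear integral $\int_0^t\int_\ell \hat\rho(\tau,\ell)\widehat W(\ell)\,\ell\cdot k(t-\tau)\,\hat g(\tau,k-\ell,kt-\tau\ell)\,\dd\ell\,\dd\tau$. I would split the $\ell$-integral into the usual reaction region (where $|\ell| \gtrsim |k|$, so $\hat g$ is at low spatial frequency and $\hat\rho$ carries the regularity) and the transport region (where $|k-\ell|\gtrsim |k|$, so $g$ carries the regularity and $\rho$ is the low-frequency factor), distributing the weight $A_{\sigma_4}(t,k)$ onto $A_{\sigma_4}(\tau,\ell)\hat\rho$ or onto $\jap{t\grad_z,\grad_v}g$ via \eqref{ctrl:HiLocalizedBoot} accordingly, paying the $|k-\ell|^{\cdots}$ and $\jap{\tau}$ losses from the time/weight mismatch. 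In the transport region one uses \eqref{ctrl:HiLocalizedBoot} (possibly \eqref{ctrl:LoLocalizedBoot}) for the $g$-factor and \eqref{ctrl:MidLinftyrho} or \eqref{ctrl:LowLinftyf} for the cheaper factor, and Schur's test / Young's inequality in $(k,\ell)$ and $(t,\tau)$ closes it; the $\jap{t}^{5}$ growth allowed in \eqref{ctrl:HiLocalizedBoot} with the $|k|^{1/2}$ gain in $A_s$ and the $t^{-3/2}$-type decay built into the mixing weights is what makes the time integral summable. The reaction region is the genuinely new part: here one must get an $L^2_tL^2_k \to L^2_tL^2_k$ bound on the echo operator, and the decisive gain over the confined case is the frequency-dispersion of $\partial_t - k\cdot\grad_\eta$ discussed in \S\ref{sec:Echo} — non-colinear $(k,\ell)$ separate in velocity-frequency so the resonance is confined to a thin set, and spatial localization (the two extra moments $z^\alpha$ on the data, propagated through $\jap{t\grad_z,\grad_v}g$) prevents concentration there, yielding an extra power of decay that makes finite $\sigma$ enough.

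Once \eqref{ctrl:MidBoot} is improved, I would improve \eqref{ctrl:MidLinftyrho} by the same Volterra argument but taking $L^\infty_kL^2_t$ (a Chemin–Lerner norm, as the remark notes): the linear estimate \eqref{ineq:LinearCtrl} is already pointwise in $k$, so one only needs the bilinear forcing bounds in $L^\infty_kL^2_t$, which follow from the already-improved $L^2_tL^2_k$ control plus the lower-regularity controls. Then \eqref{ctrl:LowLinftyf} is improved from \eqref{eq:feqn} by Duhamel in $t$, bounding $\sup_{k,\eta}|\jap{\eta}^{\sigma_1}\hat g|$ against $\int_0^t$ of the source plus a convolution of $\hat\rho$ with $\hat g$, using $\widehat W \in \jap{k}^{-2}$ and the $\rho$-controls; \eqref{ctrl:LoLocalizedBoot} and \eqref{ctrl:HiLocalizedBoot} are improved likewise by energy estimates on \eqref{def:VPEgliding}, commuting the multipliers $\abs{\grad_z}^\delta\jap{\grad}^{\sigma_3}$ and $\jap{t\grad_z,\grad_v}\jap{\grad}^{\sigma_4}$ past the transport operator and the moments $v^\alpha$, where the only delicate commutator is $[\jap{t\grad_z,\grad_v}, F(t,z+vt)\cdot(\grad_v - t\grad_z)]$, handled by the same reaction/transport splitting. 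The main obstacle throughout is the reaction term in the estimate for \eqref{ctrl:MidBoot}: extracting enough decay from the frequency-dispersion mechanism to beat the echo cascade in finite regularity, while tracking the interplay between the weight $A_{\sigma_4}$, the allowed $\jap{t}^{5}$ growth of the localized energy, and the regularity gap $\bar\sigma - \sigma_4$ needed for the linear Volterra estimate to apply; everything else is bookkeeping by comparison.
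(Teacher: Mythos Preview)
Your overall architecture matches the paper's: Volterra structure plus Proposition~\ref{lem:LinearL2Damping} for \eqref{ctrl:MidBoot}, a reaction/transport paraproduct split, then the remaining bootstrap controls in the order you describe, and you correctly identify the reaction echo estimate as the crux and the frequency dispersion of $\partial_t - k\cdot\nabla_\eta$ as the new mechanism.

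One misattribution is worth correcting, since it concerns the heart of the argument. The reaction term is \emph{not} closed via spatial $z$-moments or via \eqref{ctrl:HiLocalizedBoot}; the $z^\alpha$ moments on the data are used only once, in Lemma~\ref{lem:IDconst}, and are never propagated. What actually feeds the reaction estimate is the pointwise bootstrap control \eqref{ctrl:LowLinftyf}: it converts $\abs{\hat g(\tau,k-\ell,kt-\ell\tau)}$ into $\epsilon\jap{k-\ell,kt-\ell\tau}^{-\sigma_1}$, producing an explicit time-response kernel $\bar K(t,\tau,k,\ell)$ (see \eqref{def:trK}--\eqref{def:barK}). The $L^2_tL^2_k\to L^2_tL^2_k$ bound is then Schur's test, and the row- and column-sum bounds (Lemmas~\ref{lem:Moment} and~\ref{lem:dual}) are obtained by splitting $\ell$ (resp.\ $k$) into a time-shrinking cylinder $I_R$ of co-linear frequencies and its complement $I_{NR}$; on $I_R$ one integrates the $(d-1)$-dimensional cross-section to gain $(1+\tau)^{-(d-1)\zeta}$, and on $I_{NR}$ one has $\abs{kt-\ell\tau}\gtrsim \tau\abs{\ell_\perp}$. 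This is where $d\ge 3$ enters. Conversely, \eqref{ctrl:HiLocalizedBoot} is used for the \emph{transport} term, together with the $L^2$ trace Lemma~\ref{lem:SobTrace} applied on the Fourier side (the $\eta$-integral along the line $\eta=kt-\ell\tau$), and \eqref{ineq:dissipReq} (itself a consequence of \eqref{ctrl:LowLinftyf}) to control the low-frequency $\rho$-factor. With this swap of which control drives which piece, your sketch goes through as the paper's proof.
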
 

Proposition \ref{prop:boot} comprises the main step of the proof of Theorem \ref{thm:main} (see Proposition \ref{prop:final} below). 

\subsection{Useful toolbox} 
First, we observe the following, which at least shows that the norms employed to measure $\rho$ in \eqref{ctrl:Boot} are natural. 
\begin{lemma} \label{lem:IDconst}
Define 
\begin{align*}
  \rho_0(t,k) = \widehat{h_{\mbox{{\scriptsize {\em in}}}}}(k,kt). 
\end{align*}
For all $s > 4$, there holds (recall the notation $H^{s+}$ from \S\ref{sec:Note}),  
\begin{align*}
\norm{A_s \rho_0}_{L^2_t L^2_k} & \lesssim \sum_{\abs{\alpha} \leq
                                  2}\norm{z^\alpha
                                  h_{\mbox{{\scriptsize {\em in}}}}}_{H^{s+2+}_M} \\ 
\norm{A_s \rho_0}_{L^\infty_k L^2_t} & \lesssim \sum_{\abs{\alpha}
                                       \leq 2}\norm{z^\alpha
                                       h_{\mbox{{\scriptsize {\em in}}}}}_{H^{s+1+}_M}. 
\end{align*}
\end{lemma}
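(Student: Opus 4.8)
The statement is essentially a dispersive estimate for the free transport equation (in the spirit of \cite{BardosDegond1985}): writing $\rho_0(t,k)=\widehat{h_{\mathrm{in}}}(k,kt)$, the function $\rho_0(t,\cdot)$ is the spatial Fourier transform of the free-transport density $\varrho^{\rm free}(t,x)=\int_{\R^3}h_{\mathrm{in}}(x-tv,v)\,\dd v$. The plan is to reduce both inequalities to a single \emph{trace} statement: the $L^2$-norm of the restriction of a function on $\R^3_\eta$ to a line through the origin is controlled, \emph{uniformly in the direction of the line}, by two $\eta$-derivatives of the function --- equivalently, by two velocity moments of $h_{\mathrm{in}}$. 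The weight $|k|^{1/2}$ in $A_s$ is precisely what makes the change of variables below dimensionally clean.

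Fix $k\neq 0$ (the set $k=0$ is null and $A_s(t,0)=0$ there) and set $\omega:=k/|k|$. Since $\jap{k,tk}=\jap{k,\eta}\big|_{\eta=tk}$ and $|A_s(t,k)|^2=|k|\,\jap{k,tk}^{2s}$, the substitution $\tau=|k|t$ gives, with $u(k,\eta):=\jap{k,\eta}^s\,\widehat{h_{\mathrm{in}}}(k,\eta)$,
\begin{align*}
\int_0^\infty|A_s(t,k)\rho_0(t,k)|^2\,\dd t\;=\;\int_0^\infty|u(k,\tau\omega)|^2\,\dd\tau\;\le\;\int_{\R}|u(k,\tau\omega)|^2\,\dd\tau,
\end{align*}
which is the squared $L^2$-norm of the restriction of $\eta\mapsto u(k,\eta)$ to the line $\R\omega$. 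The key lemma is then: for every unit vector $\omega$ and every $\delta>0$, with implicit constant independent of $\omega$,
\begin{align*}
\int_{\R}|w(\tau\omega)|^2\,\dd\tau\;\lesssim_\delta\;\norm{\jap{D_\eta}^{1+\delta}w}_{L^2_\eta(\R^3)}^2\;\le\;C\sum_{|\alpha|\le2}\norm{D_\eta^\alpha w}_{L^2_\eta(\R^3)}^2 .
\end{align*}
This is proved on the Fourier side: $\widehat{w|_{\R\omega}}(\sigma)=c\int_{\{\theta:\,\omega\cdot\theta=\sigma\}}\hat w(\theta)\,\dd S(\theta)$, so Cauchy--Schwarz against $\jap{\theta}^{-1-\delta}$ together with $\int_{\{\omega\cdot\theta=\sigma\}}\jap{\theta}^{-2-2\delta}\,\dd S\lesssim_\delta\jap{\sigma}^{-2\delta}\le1$ (a finite two-dimensional integral after rescaling), Plancherel on the line, and Fubini in the foliation of $\R^3$ by the planes $\{\omega\cdot\theta=\sigma\}$ yield the claim.

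For the first inequality, apply the lemma with $w=u(k,\cdot)$, note $u(k,\cdot)=\widehat{\jap{\grad_{z,v}}^sh_{\mathrm{in}}}(k,\cdot)$ so that $D_\eta^\alpha u=\widehat{v^\alpha\jap{\grad_{z,v}}^sh_{\mathrm{in}}}$, integrate in $k$ and apply Plancherel in $(z,v)$:
\begin{align*}
\norm{A_s\rho_0}_{L^2_tL^2_k}^2\;\lesssim\;\sum_{|\alpha|\le2}\norm{v^\alpha\jap{\grad_{z,v}}^sh_{\mathrm{in}}}_{L^2_{z,v}}^2\;\lesssim\;\norm{h_{\mathrm{in}}}_{H^s_M}^2
\end{align*}
for $M\ge2$, after commuting the moment weights past $\jap{\grad_{z,v}}^s$; this is stronger than the claimed bound, which follows a fortiori. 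For the second inequality the same reduction plus the lemma gives $\sup_k\int_0^\infty|A_s\rho_0|^2\,\dd t\lesssim\sup_k\sum_{|\alpha|\le2}\norm{v^\alpha\widetilde p(k,\cdot)}_{L^2_v}^2$, where $\widetilde p(k,v)$ is the Fourier transform in $z$ only of $p:=\jap{\grad_{z,v}}^sh_{\mathrm{in}}$; moving $\sup_k$ inside the $v$-integral ($\sup_k\int_v\le\int_v\sup_k$) and using $\sup_k|\widetilde p(k,v)|\lesssim\norm{p(\cdot,v)}_{L^1_z}\lesssim\norm{\jap{z}^2p(\cdot,v)}_{L^2_z}$ (Cauchy--Schwarz, since $\jap{z}^{-2}\in L^2(\R^3)$), we get after commuting weights
\begin{align*}
\norm{A_s\rho_0}_{L^\infty_kL^2_t}^2\;\lesssim\;\norm{\jap{z}^2\jap{v}^2\jap{\grad_{z,v}}^sh_{\mathrm{in}}}_{L^2_{z,v}}^2\;\lesssim\;\sum_{|\alpha|\le2}\norm{z^\alpha h_{\mathrm{in}}}_{H^s_M}^2 ,
\end{align*}
again stronger than claimed.

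The only genuinely non-routine point is the uniform-in-direction trace inequality, i.e. that restriction to a line \emph{through the origin} behaves as well as restriction to a generic affine line; this is handled by the rotation invariance of $\jap{D_\eta}^{1+\delta}$ and the explicit Fourier computation above. The remaining steps --- the substitution $\tau=|k|t$, commuting the integer moment weights $\jap{v}^2,\jap{z}^2$ past $\jap{\grad_{z,v}}^s$ (the commutators being of strictly lower order), and the $L^1_z$--$L^2_z$ Cauchy--Schwarz --- are routine, and the $H^{s+2+}$, $H^{s+1+}$ and $|\alpha|\le2$ in the statement leave ample room for them (one could equally run the argument at non-integer $s$ via an almost-orthogonality decomposition, as in \cite{BMM13}).
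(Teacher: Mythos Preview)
Your proof is correct and in fact sharper than the paper's. The paper takes the cruder route of the Sobolev embedding $H^{3/2+}_{k,\eta}\hookrightarrow L^\infty_{k,\eta}$ applied on the Fourier side, bounding
\[
\jap{k,kt}^{s+2+}\abs{\widehat{h_{\mathrm{in}}}(k,kt)}\;\lesssim\;\sum_{|\alpha|\le2}\norm{z^\alpha h_{\mathrm{in}}}_{H^{s+2+}_M}
\]
pointwise in $(k,t)$, and then simply integrating the residual decay $|k|\jap{k,kt}^{-4-}$ over $(t,k)$ (respectively $|k|\jap{k,kt}^{-1-}$ over $t$ for fixed $k$). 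This costs the extra $2+$ (respectively $1+$) derivatives appearing in the statement and forces two $z$-moments even for the $L^2_tL^2_k$ estimate.

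Your approach instead spends the weight $|k|^{1/2}$ on the substitution $\tau=|k|t$, recognizing the time integral as the squared $L^2$-trace of $\eta\mapsto\jap{k,\eta}^s\widehat{h_{\mathrm{in}}}(k,\eta)$ along the line $\R\omega$, and then invokes the $H^{1+}_\eta$ trace inequality --- which is exactly the paper's Lemma~\ref{lem:SobTrace}, used later in the Transport estimate of \S4.1.1 for the same purpose. This yields $\norm{A_s\rho_0}_{L^2_tL^2_k}\lesssim\norm{h_{\mathrm{in}}}_{H^s_M}$ with \emph{no} $z$-moments and no extra regularity. For the $L^\infty_kL^2_t$ bound you then trade the $\sup_k$ for two $z$-moments via the trivial $L^1_z\to L^\infty_k$ bound and Cauchy--Schwarz against $\jap{z}^{-2}\in L^2(\R^3)$, again landing at $H^s$ rather than $H^{s+1+}$. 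The paper's argument is two lines; yours is longer but lossless, and the trace lemma it rests on is already part of the paper's toolbox.
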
 
\begin{proof} 
  The proof of the first estimate is straightforward by the
  $H^{3/2+}(\Real^3) \hookrightarrow C^0$ Sobolev embedding applied on the
  Fourier side and $M \geq 2$,
\begin{align*}
\int_{0}^\infty \int_{k} \abs{k} \jap{k,kt}^{2s}
  \abs{\widehat{h_{\mbox{{\scriptsize in}}}}(k,kt)}^2 \dd k \dd t &
                                                                    \lesssim
                                                                    \left(
                                                                    \int_{0}^\infty
                                                                    \int_{k}
                                                                    \frac{\abs{k}}{\jap{k,kt}^{4+}}
                                                                    \dd
                                                                    k
                                                                    \dd
                                                                    t
                                                                    \right)
                                                                    \left(\sum_{\abs{\alpha} \leq 2}\norm{z^\alpha h_{\mbox{{\scriptsize in}}}}_{H^{s+2+}_M}^2\right) \\ 
& \lesssim \left(1 + \int_{1}^\infty \frac{1}{t^{4}} \int_{\zeta} \frac{\abs{\zeta}}{\jap{\zeta}^{4+}} \dd \zeta \dd t\right) \left(\sum_{\abs{\alpha} \leq 2}\norm{z^\alpha h_{\mbox{{\scriptsize in}}}}_{H^{s+2+}_M}^2\right) \\ 
 & \lesssim \sum_{\abs{\alpha} \leq 2}\norm{z^\alpha h_{\mbox{{\scriptsize in}}}}_{H^{\sigma+2+}_M}^2. 
\end{align*} 
The second estimate follows slightly differently. For all $k \in \Real^3$ we have, 
\begin{align*}
\int_0^\infty \abs{k}
  \jap{k,kt}^{s}\abs{\widehat{h_{\mbox{{\scriptsize in}}}}(k,kt)}^2
  \dd t & \lesssim \left( \int_0^\infty
          \frac{\abs{k}}{\jap{k,kt}^{1+}} \dd t \right) \left(\sum_{\abs{\alpha} \leq 2}\norm{z^\alpha h_{\mbox{{\scriptsize in}}}}_{H^{s+1+}_M}^2\right) \\
& \lesssim \left(\sum_{\abs{\alpha} \leq 2}\norm{z^\alpha h_{\mbox{{\scriptsize in}}}}_{H^{s+1+}_M}^2\right).
\end{align*}
\end{proof} 

Next, let us point out a consequence of the estimate \eqref{ctrl:LowLinftyf}, which provides the dispersive decay of the density and force field.  
\begin{lemma}
Under the bootstrap hypotheses, for all $0 \leq \alpha < \sigma_1 - \gamma - 3$, there holds 
\begin{align}
\norm{\abs{\partial_z}^{\alpha} \jap{\partial_z,t\partial_z}^\gamma \rho}_{L^\infty} \lesssim \int_k  \abs{k}^{\alpha} \jap{k,kt}^\gamma \abs{\widehat{\rho}(t,k)} \dd k \lesssim K_5\epsilon\jap{t}^{-3-\alpha}. \label{ineq:dissipReq} 
\end{align}
\end{lemma}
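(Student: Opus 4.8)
The plan is to derive \eqref{ineq:dissipReq} directly from the uniform pointwise bound \eqref{ctrl:LowLinftyf} on $\widehat{\jap{\grad}^{\sigma_1}g}$, using the relation $\hat\rho(t,k) = \hat g(t,k,kt)$ from \eqref{def:VPEgliding}. The first inequality in \eqref{ineq:dissipReq} is just the standard Fourier-inversion bound: for any Fourier multiplier $m(k)$ one has $\norm{m(\grad_z)\rho(t)}_{L^\infty_x} \leq \int_k \abs{m(k)}\abs{\hat\rho(t,k)}\dd k$ up to the fixed $(2\pi)^{-3}$ normalization, applied with $m(k) = \abs{k}^\alpha\jap{k,kt}^\gamma$; this records that the Fourier multiplier notation $\abs{\partial_z}^\alpha\jap{\partial_z,t\partial_z}^\gamma$ is being used exactly as in the definition of $A_s$. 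So the content is the second inequality.

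For the second inequality, I would write $\jap{k,kt}^\gamma = \jap{k}^\gamma\jap{kt/\jap{k}}^\gamma \lesssim \jap{k}^\gamma\jap{k,kt}^\gamma/\jap{k}^\gamma$ — more usefully, just estimate $\abs{\hat\rho(t,k)} = \abs{\hat g(t,k,kt)} \leq \jap{(k,kt)}^{-\sigma_1}\norm{\widehat{\jap{\grad}^{\sigma_1}g}(t)}_{L^\infty_{k,\eta}} \lesssim K_5\epsilon\jap{k,kt}^{-\sigma_1}$, using \eqref{ctrl:LowLinftyf} (and absorbing the $\epsilon^2$ versus $\epsilon$ discrepancy exactly as the bootstrap constants are set up, i.e. $4K_5\epsilon^2 \lesssim K_5\epsilon$ for $\epsilon$ small, or simply keeping $K_5\epsilon$ as written in the statement). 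Substituting,
\begin{align*}
\int_k \abs{k}^\alpha \jap{k,kt}^\gamma \abs{\hat\rho(t,k)}\dd k \lesssim K_5\epsilon \int_k \abs{k}^\alpha \jap{k,kt}^{\gamma - \sigma_1}\dd k.
\end{align*}
Now I split the integral at $\abs{k} = 1$. On $\abs{k} \leq 1$ we have $\jap{k,kt} \approx \jap{t}$ (since $\abs{k}\jap{t} \lesssim \jap{kt} \leq \jap{k,kt}$ and $\jap{k,kt} \lesssim \jap{k}\jap{t} \lesssim \jap{t}$... more carefully $\jap{k,kt}^2 = 1 + \abs{k}^2 + \abs{k}^2t^2$, so for $\abs{k}\leq 1$, $\jap{k,kt} \leq \jap{k}\jap{t} \le \sqrt2\jap{t}$; the lower bound $\jap{k,kt} \gtrsim \abs{k}\jap{t}$ is what produces the decay after the $k$-integration), and one gets
\begin{align*}
\int_{\abs{k}\leq 1}\abs{k}^\alpha\jap{k,kt}^{\gamma-\sigma_1}\dd k \lesssim \jap{t}^{\gamma-\sigma_1}\int_{\abs{k}\leq 1}\abs{k}^\alpha\left(\frac{\abs{k}\jap{t}}{\jap{k,kt}}\right)^{0}\dd k,
\end{align*}
but to squeeze out the full $\jap{t}^{-3-\alpha}$ I instead bound $\jap{k,kt}^{\gamma - \sigma_1} \lesssim \jap{t}^{\gamma-\sigma_1+N}\abs{k}^{-N}$ for suitable $N$ when $\abs{kt}\geq 1$, or change variables $\zeta = kt$: on $\abs{k}\leq 1$ with $t\geq 1$, set $\zeta = kt$, $\dd k = t^{-3}\dd\zeta$, $\abs{k}^\alpha = t^{-\alpha}\abs{\zeta}^\alpha$, $\jap{k,kt} \approx \jap{\zeta}$ (for $\abs{\zeta}\ge 1$), giving $t^{-3-\alpha}\int_{\abs\zeta \lesssim t}\abs{\zeta}^\alpha\jap{\zeta}^{\gamma-\sigma_1}\dd\zeta$, which is $\lesssim t^{-3-\alpha}$ provided $\alpha + 3 + \gamma - \sigma_1 < 0$, i.e. $\alpha < \sigma_1 - \gamma - 3$ — precisely the hypothesis. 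The region $\abs{k}\leq\min(1,1/t)$ contributes $\lesssim \jap{t}^{\gamma-\sigma_1}\cdot(1/t)^{3+\alpha} \lesssim \jap t^{-3-\alpha}$ similarly, and the region $\abs{k}\geq 1$ contributes $\lesssim \int_{\abs{k}\geq 1}\abs{k}^{\alpha+\gamma-\sigma_1}\jap{kt}^{\gamma-\sigma_1+\text{(shift)}}$... one uses $\jap{k,kt}\geq\jap{kt}\gtrsim \abs{k}t$ to extract $t^{-3-\alpha}$ and the remaining $k$-integral converges because $\sigma_1$ is large. For $0\le t\le 1$ the claim is trivial since $\jap{t}\approx 1$ and $\int_k\abs{k}^\alpha\jap{k}^{\gamma-\sigma_1}\dd k < \infty$ when $\alpha + \gamma - \sigma_1 < -3$.

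The only mildly delicate point — and the ``main obstacle,'' such as it is — is bookkeeping the powers of $\jap t$ and $\abs k$ across the two frequency regimes so that exactly $\jap t^{-3-\alpha}$ comes out and the residual $k$-integrals converge; this is where the hypothesis $\alpha < \sigma_1 - \gamma - 3$ (together with $\sigma_1 \geq 11$, which gives plenty of room for the $\gamma$'s appearing in applications) is used. Everything else is the routine substitution $\hat\rho(t,k) = \hat g(t,k,kt)$ plus Sobolev/$L^\infty$ interpolation on the Fourier side, so no separate lemma is needed and the proof is short.
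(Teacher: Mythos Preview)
Your argument is correct and follows the same idea as the paper: use $\hat\rho(t,k)=\hat g(t,k,kt)$ together with \eqref{ctrl:LowLinftyf} to get $\abs{\hat\rho(t,k)}\lesssim K_5\epsilon\jap{k,kt}^{-\sigma_1}$, then estimate the resulting $k$-integral by scaling. The paper does the last step in one stroke via $\abs{k}^\alpha\lesssim\jap{t}^{-\alpha}\jap{k,kt}^\alpha$ (valid since $\abs{k}\jap{t}\lesssim\jap{k,kt}$), which reduces matters to $\jap{t}^{-\alpha}\int_k\jap{k,kt}^{\gamma-\sigma_1+\alpha}\dd k\lesssim\jap{t}^{-3-\alpha}$ and avoids your case split on $\abs{k}\lessgtr 1$; otherwise the content is identical.
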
 
\begin{proof} 
This follows immediately from \eqref{ctrl:LowLinftyf} (and recalling that $\hat{g}(t,k,kt) = \hat{\rho}(t,k)$ from \eqref{def:VPEgliding}), 
\begin{align*}
\int_k  \abs{k}^{\alpha} \jap{k,kt}^\gamma \abs{\widehat{\rho}(t,k)} \dd k \lesssim \epsilon \int_k  \abs{k}^{\alpha} \jap{k,kt}^{\gamma-\sigma_1} \dd k \lesssim K_5 \jap{t}^{-\alpha}\epsilon \int_k \jap{k,kt}^{\gamma-\sigma_1 + \alpha} \dd k \lesssim K_5\epsilon\jap{t}^{-3-\alpha}.
\end{align*}
\end{proof} 

Let us also record a few simple inequalities that will be used a few times in the sequel (on the Fourier-side). 
\begin{lemma}[$L^2$ Trace] \label{lem:SobTrace}
Let $g \in H^s(\Real^d)$ with $s > (d-1)/2$ and $C \subset \Real^d$ be an arbitrary straight line. Then there holds, 
\begin{align*} 
\norm{g}_{L^2(C)} \lesssim_s \norm{g}_{H^s}. 
\end{align*} 
\end{lemma}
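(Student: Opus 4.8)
The plan is to prove the $L^2$ trace inequality by reducing to the classical Sobolev trace theorem on a hyperplane. First I would observe that the statement is invariant under rotations and translations of $\Real^d$: if $C$ is an arbitrary straight line, pick an orthogonal rotation $R$ and a translation sending $C$ to the coordinate axis $\{(t,0,\dots,0) : t \in \Real\}$; since the $H^s$ norm is invariant under isometries of $\Real^d$, it suffices to prove $\norm{g}_{L^2(C_0)} \lesssim_s \norm{g}_{H^s(\Real^d)}$ for this fixed axis $C_0$.

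Next I would iterate the standard codimension-one trace theorem. Recall that for any $s' > 1/2$ one has the bound $\norm{g|_{\{x_d = 0\}}}_{H^{s'-1/2}(\Real^{d-1})} \lesssim_{s'} \norm{g}_{H^{s'}(\Real^d)}$; in particular $\norm{g|_{\{x_d=0\}}}_{L^2(\Real^{d-1})} \lesssim \norm{g}_{H^{s'}(\Real^d)}$ whenever $s' > 1/2$. Applying this successively to peel off the coordinates $x_d, x_{d-1}, \dots, x_2$, each step costs slightly more than $1/2$ derivative, so after $d-1$ restrictions we land on the line $C_0$ with the requirement $s > (d-1)/2$, which is exactly the hypothesis. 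Concretely, restricting to $\{x_d = 0\}$ needs $s > (d-1)/2$ and yields a function in $H^{s - 1/2}(\Real^{d-1})$; restricting that to $\{x_{d-1}=0\}$ needs $s - 1/2 > (d-2)/2$, i.e. $s > (d-1)/2$ again, and so on down to the last step, which needs $s - (d-2)/2 > 1/2$, again $s > (d-1)/2$. Chaining the inequalities gives $\norm{g}_{L^2(C_0)} \lesssim_s \norm{g}_{H^s(\Real^d)}$.

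Alternatively — and perhaps cleaner for a short write-up — one can argue directly on the Fourier side. Parametrize $C_0$ by $t \mapsto t e_1$; then $g(t e_1) = (2\pi)^{-d}\int_{\Real^d} e^{i t k_1}\hat g(k)\, dk = (2\pi)^{-d}\int_\Real e^{i t k_1}\big(\int_{\Real^{d-1}} \hat g(k_1, k')\, dk'\big) dk_1$, so $g|_{C_0}$ is (up to constants) the one-dimensional inverse Fourier transform of $G(k_1) := \int_{\Real^{d-1}} \hat g(k_1,k')\, dk'$. By Plancherel in one variable, $\norm{g}_{L^2(C_0)}^2 \approx \int_\Real \abs{G(k_1)}^2\, dk_1$, and by Cauchy–Schwarz in $k'$ with the weight $\jap{k}^{2s} = (1+\abs{k}^2)^s$,
\begin{align*}
\abs{G(k_1)}^2 \leq \Big(\int_{\Real^{d-1}} \jap{k}^{-2s}\, dk'\Big)\Big(\int_{\Real^{d-1}} \jap{k}^{2s}\abs{\hat g(k)}^2\, dk'\Big),
\end{align*}
where the first factor is finite and uniformly bounded in $k_1$ precisely when $2s > d-1$. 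Integrating in $k_1$ then gives $\norm{g}_{L^2(C_0)}^2 \lesssim_s \norm{g}_{H^s(\Real^d)}^2$.

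There is essentially no hard part here: the only things to be careful about are the bookkeeping of the regularity threshold (making sure the repeated loss of $1/2$ a derivative accumulates to exactly $(d-1)/2$, or equivalently that the Fourier weight integral $\int_{\Real^{d-1}}\jap{k}^{-2s}\,dk'$ converges iff $2s > d-1$) and the reduction to a coordinate line via isometry invariance of $H^s$. I would present the Fourier-side computation as the main argument since it is self-contained and makes the sharp exponent transparent, with the isometry reduction as a one-line preliminary.
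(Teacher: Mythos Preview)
Your proof is correct; in fact the paper does not supply a proof of this lemma at all, recording it merely as one of ``a few simple inequalities'' to be used later, so there is nothing to compare against. Your Fourier-side argument is the standard self-contained proof and makes the threshold $s>(d-1)/2$ transparent via the convergence of $\int_{\Real^{d-1}}\jap{k}^{-2s}\,dk'$; the preliminary reduction to a coordinate axis by isometry invariance of $H^s$ is exactly right.
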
 

\begin{lemma} \label{lem:Youngs} 
\begin{itemize} 
\item[(a)] Let
  $g^1,g^2 \in L^2(\Real^d_k \times \Real^d_\eta)$ and
  $r \in L^1(\Real^d_\eta)$. Then
\begin{align} 
\abs{\int  g^1(k,\eta) r(\ell)
  g^2(k-\ell,\eta-t\ell) \dd\ell \dd k \dd\eta
 } \lesssim \norm{g^1}_{L^2_{k,\eta}}
  \norm{g^2}_{L^2_{k,\eta}} \| r\|_{L^1_\eta}. \label{ineq:L2L2L1}      
\end{align} 
\item[(b)] Let $g^1 \in L^2(\Real^d_k \times \Real^d_\eta)$ and
  $g^2 \in L^1(\Real^d_k; L^2(\Real^d_\eta))$ and
  $r \in L^2(\Real^d)$. Then
\begin{align} 
\abs{\int g^1(k,\eta) r(\ell)
  g^2(k-\ell,\eta-t\ell) \dd\ell \dd k \dd\eta} \lesssim \norm{g^1}_{L^2_{k,\eta}}
  \norm{g^2}_{L^1(\Real^d_k; L^2(\Real^d_\eta))} 
\norm{r}_{L^2_\eta}.    \label{ineq:L2L1L2}      
\end{align} 
\end{itemize}
As a result, if $s > d/2$, there also holds 
\begin{align}
  \abs{\int  g^1(k,\eta) r(\ell) g^2(k-\ell,\eta-t\ell) \dd\ell \dd k \dd\eta} & \lesssim_{d,s} \norm{g^2}_{L^2_{k,\eta}} \norm{g^2}_{L^2_{k,\eta}} \norm{\jap{\cdot}^s r(\cdot)}_{L^2_\eta}    \label{ineq:L2L2L1_L2} \\       
  \abs{\int  g^1(k,\eta) r(\ell) g^2(k-\ell,\eta-t\ell) \dd \ell \dd k \dd \eta} & \lesssim_{d,s} \norm{g^1}_{L^2_{k,\eta}} \norm{\jap{k}^s g^2}_{L^2_{k,\eta}} \norm{r}_{L^2_\eta}.    \label{ineq:L2L1L2_L2}      
\end{align}

\end{lemma}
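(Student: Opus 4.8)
The plan is to establish the trilinear estimates of Lemma \ref{lem:Youngs} in three pieces: first prove part (a), then prove part (b) by a symmetric argument, and finally deduce the two ``as a result'' inequalities \eqref{ineq:L2L2L1_L2}--\eqref{ineq:L2L1L2_L2} by combining (a) or (b) with the Sobolev embedding $H^s(\Real^d) \hookrightarrow L^1(\Real^d)$ for $s > d/2$ (applied either in the $\eta$ variable or in the $k$ variable, depending on which factor one wants to convert). These are all just weighted Young/Cauchy--Schwarz estimates, so the main work is bookkeeping the change of variables, and there is no serious obstacle; I record the steps mainly to fix the order and to be explicit about which variable the convolution structure lives in.

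\textbf{Proof of (a).} For fixed $k$, Cauchy--Schwarz in the $\eta$-integral after inserting $1 = \abs{r(\ell)}^{1/2}\cdot\abs{r(\ell)}^{1/2}$ and then Cauchy--Schwarz in $\ell$ is the cleanest route; alternatively, and more transparently, apply the generalized Young inequality directly. Concretely, bound the left side of \eqref{ineq:L2L2L1} by
\begin{align*}
\int_\ell \abs{r(\ell)} \left( \int_{k,\eta} \abs{g^1(k,\eta)}\,\abs{g^2(k-\ell,\eta - t\ell)} \dd k \dd\eta \right) \dd\ell.
\end{align*}
For each fixed $\ell$ the inner integral is, by Cauchy--Schwarz in $(k,\eta)$, at most $\norm{g^1}_{L^2_{k,\eta}} \norm{g^2(\cdot - \ell, \cdot - t\ell)}_{L^2_{k,\eta}} = \norm{g^1}_{L^2_{k,\eta}}\norm{g^2}_{L^2_{k,\eta}}$, using translation invariance of the Lebesgue measure. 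Integrating the constant bound against $\abs{r(\ell)}$ over $\ell$ produces the factor $\norm{r}_{L^1_\eta}$ and gives \eqref{ineq:L2L2L1}.

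\textbf{Proof of (b) and the corollaries.} For part (b), the convolution is now ``absorbed'' by $g^2$ in the $k$ variable rather than by $r$, so I would instead first perform Cauchy--Schwarz in $(\ell,\eta)$ holding $k$ and $k-\ell$ fixed in the appropriate way: write the integral as $\int_k \int_{\ell,\eta} \abs{g^1(k,\eta)}\abs{r(\ell)}\abs{g^2(k-\ell,\eta - t\ell)} \dd\ell\dd\eta\dd k$, bound the $\eta$-integral for fixed $k,\ell$ by $\norm{g^1(k,\cdot)}_{L^2_\eta}\norm{g^2(k-\ell,\cdot)}_{L^2_\eta}$ via Cauchy--Schwarz (using translation invariance in $\eta$), then apply Cauchy--Schwarz in $\ell$ against $\norm{r}_{L^2_\ell}$ to get $\norm{g^1(k,\cdot)}_{L^2_\eta}\norm{r}_{L^2_\ell}\left(\int_\ell \norm{g^2(k-\ell,\cdot)}_{L^2_\eta}^2\dd\ell\right)^{1/2}$, and finally Cauchy--Schwarz in $k$: the $\left(\int_\ell \norm{g^2(k-\ell,\cdot)}^2_{L^2_\eta}\dd\ell\right)^{1/2}$ factor is pointwise in $k$ equal to $\norm{g^2}_{L^2_{k,\eta}}$, which sits in $L^\infty_k$, so pairing $\norm{g^1(k,\cdot)}_{L^2_\eta} \in L^2_k$ against it is fine — wait, this gives $\norm{g^2}_{L^2_{k,\eta}}$ not $\norm{g^2}_{L^1_k L^2_\eta}$, so instead I keep the $\ell$-convolution with $g^2$: bound the $\eta$-integral as above, then use Young's convolution inequality $L^2_\ell \ast L^1_\ell \to L^2_k$ on the functions $\ell \mapsto \norm{r(\ell)}$ wait more carefully — treat $k \mapsto \int_\ell \abs{r(\ell)} \norm{g^2(k-\ell,\cdot)}_{L^2_\eta}\dd\ell$ as a convolution of $\abs{r} \in L^2_\ell$ with $\norm{g^2(\cdot,\cdot)}_{L^2_\eta} \in L^1_k$, which by Young lies in $L^2_k$ with norm $\le \norm{r}_{L^2}\norm{g^2}_{L^1_k L^2_\eta}$, and then Cauchy--Schwarz against $\norm{g^1(k,\cdot)}_{L^2_\eta} \in L^2_k$ yields \eqref{ineq:L2L1L2}. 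For \eqref{ineq:L2L2L1_L2}, apply (a) with $r$ replaced by itself and estimate $\norm{r}_{L^1_\eta} \lesssim_{d,s} \norm{\jap{\cdot}^s r}_{L^2_\eta}$ by Cauchy--Schwarz since $\jap{\cdot}^{-s} \in L^2(\Real^d)$ when $s > d/2$; for \eqref{ineq:L2L1L2_L2}, apply (b) and estimate $\norm{g^2}_{L^1_k L^2_\eta} \lesssim_{d,s} \norm{\jap{k}^s g^2}_{L^2_{k,\eta}}$ the same way. I expect no real obstacle; the only point requiring slight care is tracking which variable ($k$ or $\eta$, and with which translate) the convolution acts in, since the shift $\eta \mapsto \eta - t\ell$ does not spoil any of the $L^2$ norms by translation invariance.
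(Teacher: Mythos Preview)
Your proof is correct. The paper states this lemma without proof (it is presented as a ``simple inequality'' in the toolbox section), so there is nothing to compare against; your argument --- Cauchy--Schwarz plus translation invariance for (a), Cauchy--Schwarz in $\eta$ followed by Young's convolution inequality $L^2 \ast L^1 \to L^2$ in the $k$ variable for (b), and then the embedding $\norm{\cdot}_{L^1} \lesssim \norm{\jap{\cdot}^s \cdot}_{L^2}$ for $s>d/2$ to deduce the corollaries --- is exactly the standard route and is what the authors implicitly have in mind.
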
 
 
As a straightforward application of the above lemmas, we show that Proposition \ref{prop:boot} implies Theorem \ref{thm:main}. 

\begin{proposition} \label{prop:final} 
Proposition \ref{prop:boot} implies Theorem \ref{thm:main}.  
\end{proposition}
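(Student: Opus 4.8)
The plan is to derive, from the bootstrap conclusion of Proposition~\ref{prop:boot}, each of the three conclusions \eqref{ineq:glidingconverg}, \eqref{ineq:HiFreqDamping}, \eqref{ineq:ErhoLinftyA}, together with the local theory of Lemma~\ref{lem:loctheory} to upgrade the local solution to a global one. First, observe that by Lemma~\ref{lem:loctheory} the solution exists and is classical as long as $\|g(t)\|_{H^{\sigma'}_M}$ stays finite for $4<\sigma'\le\sigma$; by Lemma~\ref{lem:IDconst} the initial data (under the hypothesis $\sum_{|\alpha|\le 2}\|z^\alpha h_{\mathrm{in}}\|_{H^\sigma_M}\le\epsilon_0$) satisfies the bootstrap assumptions \eqref{ctrl:Boot} with $4$ replaced by something strictly less (say $1$) at $t=0$, so the set $I$ of times on which \eqref{ctrl:Boot} holds is nonempty. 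Running the standard continuity argument: on $I$ Proposition~\ref{prop:boot} improves the constants from $4$ to $2$, so $I$ is both open and closed in $[0,T^0)$; hence $I=[0,T^0)$, and the improved bound on $\|g(t)\|_{H^{\sigma_3}_M}$ (from \eqref{ctrl:LoLocalizedBoot}) together with $H^{\sigma_3}_M\hookrightarrow H^{\sigma'}_M$ keeps the Sobolev norm finite, so the blow-up criterion forces $T^0=\infty$. Thus \eqref{ctrl:Boot} holds for all $t\ge 0$ with constant $2$.

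Next I would read off the three estimates. For \eqref{ineq:HiFreqDamping}: the bootstrap control \eqref{ctrl:MidLinftyrho} gives $\|A_{\sigma_2}\hat\rho\|_{L^\infty_k L^2_t}\lesssim\epsilon$; combined with the Volterra structure \eqref{eq:volterra}--\eqref{eq:rhointegral} and the $L^2_t$-pointwise-in-$k$ control, one gets a pointwise-in-$(t,k)$ bound. More directly, since $\hat\rho(t,k)=\hat g(t,k,kt)$, apply the $L^2$ trace Lemma~\ref{lem:SobTrace} on the line $\eta\mapsto(k,kt)$ in $\eta$-space together with \eqref{ctrl:LoLocalizedBoot}/\eqref{ctrl:LowLinftyf} to obtain $|\hat\rho(t,k)|\lesssim\epsilon\,\jap{k,kt}^{-(\sigma-c)}$ after absorbing the loss $c$ coming from $\sigma-\sigma_i$ gaps (here $M>3/2$ is used so that the trace of the $v^\alpha$-moments controls the $\eta$-regularity needed). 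For \eqref{ineq:ErhoLinftyA}: write $\widehat F(t,k)=-ik\,\widehat W(k)\hat\rho(t,k)$, so $|\widehat{\jap{\grad_x}^{\sigma-c-4}F}(t,k)|\lesssim\jap{k}^{\sigma-c-4}\jap{k}^{-2}|k||\hat\rho(t,k)|$; integrating in $k$ against the decay $\jap{k,kt}^{-(\sigma-c)}$ just obtained, and extracting powers of $\jap t$ from $\jap{k,kt}^{-(\sigma-c)}$ exactly as in the proof of \eqref{ineq:dissipReq}, yields the $\jap{t}^{-4}$ decay in $L^\infty$ (this is essentially Lemma with $\gamma=\sigma-c-4$, $\alpha=1$; one needs $\sigma-c$ large enough, which is guaranteed by $\sigma>R_0$).

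For \eqref{ineq:glidingconverg}: from \eqref{def:VPEgliding}, $\partial_t g = -F(t,z+vt)\cdot(\grad_v-t\grad_z)g - F(t,z+vt)\cdot\grad_v f^0$, so $g(t)$ is Cauchy in $H^{\sigma-c}_M$ provided $\int_0^\infty\|\partial_t g(s)\|_{H^{\sigma-c}_M}\,ds<\infty$ with the right tail. Estimate $\|\partial_t g(s)\|_{H^{\sigma-c}_M}$ by moving the $v^\alpha$ moments and derivatives onto the factors, using the product/convolution structure on the Fourier side \eqref{eq:feqn}, Lemma~\ref{lem:Youngs} to handle the $\rho\ast$ terms, the decay \eqref{ineq:dissipReq} of $\rho$ and $F$ (which gives a factor $\jap{s}^{-3}$ or better from the low-frequency $L^\infty$ control), the regularity of $f^0$ from \eqref{ineq:f0Loc}, and \eqref{ctrl:HiLocalizedBoot} to control $\|\jap{s\grad_z,\grad_v}g\|_{H^{\sigma_4}_M}\lesssim\jap{s}^{5/2}\epsilon$; the net exponent is $-3+5/2=-1/2$ — too slow — so instead one should use the better-decaying norm \eqref{ctrl:MidBoot} in $L^2_tL^2_k$ and Cauchy--Schwarz in $t$ together with the $\jap{s}^{-(3/2+\delta)}$ coming from Lemma~\ref{lem:IDconst}-type estimates, to get a genuinely integrable (in fact $\jap t^{-3/2}$-tailed) bound; defining $h_\infty:=\lim_{t\to\infty}g(t)$ in $H^{\sigma-c}_M$ then gives \eqref{ineq:glidingconverg}, and $h_\infty$ is mean-zero since each $g(t)$ is.

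\textbf{Main obstacle.} The routine-looking part — bounding $\|\partial_t g\|_{H^{\sigma-c}_M}$ with an \emph{integrable} time tail of rate $\jap t^{-3/2}$ — is the delicate point: the naive combination of the $\jap t^{-3}$ decay of $F$ with the $\jap t^{5/2}$ growth allowed in \eqref{ctrl:HiLocalizedBoot} only gives $\jap t^{-1/2}$, so one must be careful to pair the reaction term $F\cdot(\grad_v-t\grad_z)g$ using the $L^2_tL^2_k$ density control \eqref{ctrl:MidBoot} (which costs no growth) rather than the pointwise-in-$t$ control, apply Cauchy--Schwarz in time to split the $t$-integral, and check that the resulting regularity budget $\sigma-c$ fits inside the gaps $\sigma_4<\sigma_3<\dots$; getting the bookkeeping of moments, derivatives, and the loss $c$ consistent across all three conclusions is where the care is needed, but no new idea beyond Lemmas~\ref{lem:SobTrace}--\ref{lem:Youngs} and the bootstrap bounds is required.
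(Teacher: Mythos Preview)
Your overall architecture (continuity argument to globalize, then read off the three conclusions) matches the paper. The derivations of \eqref{ineq:HiFreqDamping} and \eqref{ineq:ErhoLinfty} are also essentially correct, though you overcomplicate the first: since $\hat\rho(t,k)=\hat g(t,k,kt)$, the pointwise bound \eqref{ctrl:LowLinftyf} gives $|\hat\rho(t,k)|\lesssim\epsilon\jap{k,kt}^{-\sigma_1}$ immediately, with no need for the trace lemma or the $L^\infty_k L^2_t$ control.

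Where you diverge from the paper is in \eqref{ineq:glidingconverg}, and the ``main obstacle'' you flag is in fact an artifact of pairing the wrong bootstrap estimates. The paper never touches the growing norm \eqref{ctrl:HiLocalizedBoot} in this step, nor does it use \eqref{ctrl:MidBoot} with Cauchy--Schwarz in time. Instead it works at regularity $\sigma_0<\sigma_1-5/2$ and pairs the \emph{pointwise} decay of $\hat\rho$ coming from \eqref{ctrl:LowLinftyf} with the \emph{uniformly bounded} norm $\|\,|\partial_z|^\delta g\|_{H^{\sigma_3}_M}$ from \eqref{ctrl:LoLocalizedBoot}. Concretely, writing the integrated equation as $\jap{k,\eta}^{\sigma_0}D_\eta^\alpha\hat g(t)=\jap{k,\eta}^{\sigma_0}D_\eta^\alpha\hat g_{\mathrm{in}}-\int_0^t L\,d\tau-\int_0^t NL\,d\tau$, one estimates at each fixed $\tau$
\[
\|L(\tau)\|_{L^2_{k,\eta}}^2\lesssim\int_k|k|^2\jap{k,k\tau}^{2\sigma_0}|\hat\rho(\tau,k)|^2\,dk\lesssim\epsilon^2\int_k|k|^2\jap{k,k\tau}^{2(\sigma_0-\sigma_1)}\,dk\lesssim\epsilon^2\jap{\tau}^{-5},
\]
and, using $|\eta-\tau k|\le\jap{\tau}\jap{k-\ell,\eta-\tau\ell}$ together with \eqref{ctrl:LoLocalizedBoot} and \eqref{ineq:dissipReq},
\[
\|NL(\tau)\|_{L^2_{k,\eta}}^2\lesssim\jap{\tau}^2\,\|\,|\partial_z|^\delta g\|_{H^{\sigma_0}_M}^2\left(\int_\ell\frac{|\ell|}{|k-\ell|^\delta}\jap{\ell,\ell\tau}^{\sigma_0}|\hat\rho(\tau,\ell)|\,d\ell\right)^2\lesssim\epsilon^4\jap{\tau}^{2\delta-6}.
\]
Both integrands are directly $L^1_\tau$ with the required tail, so $h_\infty$ is defined by the absolutely convergent improper integrals and \eqref{ineq:glidingconverg} follows. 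Your proposed route through \eqref{ctrl:MidBoot} and Cauchy--Schwarz in time may be salvageable, but the details you give are too vague (the appeal to ``Lemma~\ref{lem:IDconst}-type estimates'' is not the right source of decay for the nonlinear term), and in any case it is unnecessary once one realizes that dropping to $\sigma_0$ lets you avoid the growing norm altogether.
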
 
\begin{proof} 
Estimate \eqref{ctrl:LowLinftyf} directly implies \eqref{ineq:HiFreqDamping} by $\hat{\rho}(t,k) = \hat{g}(t,k,kt)$ whereas \eqref{ineq:ErhoLinfty} follows by \eqref{ineq:dissipReq} above (also a direct consequence of \eqref{ctrl:LowLinftyf}).  

To deduce \eqref{ineq:glidingconverg}, begin by applying $\jap{k,\eta}^{\sigma_0} D_\eta^\alpha$ for a multi-index $\abs{\alpha} \leq M$ and $\sigma_0 < \sigma_1-5/2$ and integrating \eqref{eq:feqn}: 
\begin{align}
\jap{k,\eta}^{\sigma_0} D_\eta^\alpha \hat{g}(t,k,\eta) & = \jap{k,\eta}^{\sigma_0}D_\eta^\alpha \hat{g}_{\mbox{{\scriptsize in}}}(k,\eta) - \int_0^t\jap{k,\eta}^{\sigma_0} D_\eta^\alpha \left( \hat{\rho}(\tau,k) \widehat{W}(k) k \cdot (\eta - \tau k) \hat{f}^0(\eta - k\tau) \right) \dd\tau \nonumber \\ & \quad - \int_0^t\int_{\ell} \jap{k,\eta}^{\sigma_0} D_\eta^\alpha \left( \hat{\rho}(\tau,\ell)\widehat{W}(\ell)\ell\cdot \left[ \eta  - \tau k\right] \hat g(\tau,k-\ell,\eta - t\ell) \right)  \dd\ell \dd\tau \nonumber \\ 
& = \jap{k,\eta}^{\sigma_0} D_\eta^\alpha \hat{g}_{\mbox{{\scriptsize in}}} - \int_0^t L \dd\tau - \int_0^t NL \dd\tau. \label{eq:final}
\end{align}
By Proposition \ref{prop:boot} there holds (using that $\bar{\sigma}$ is sufficiently large), 
\begin{align*}
\norm{L}_{L^2_{k,\eta}}^2 & \lesssim \int_{k,\eta} \abs{k}^2\jap{k,kt}^{2\sigma_0}\abs{\rho(t,k)}^2 \jap{\eta-kt}^{2\sigma_0} \abs{D_\eta^\alpha \left((\eta-kt)\widehat{f^0}(\eta-kt)\right)}^2 \dd k d\eta \\ 
& \lesssim \int_{k} \abs{k}^2\jap{k,kt}^{2\sigma_0}\abs{\rho(t,k)}^2 \dd k \\ 
& \lesssim \epsilon^2 \int_k \abs{k}^2\jap{k,kt}^{2\sigma_0-2\sigma_1} \dd k \\
& \lesssim \epsilon^2 \jap{t}^{-5}. 
\end{align*}
Similarly, 
\begin{align*}
\norm{NL}_{L^2_{k,\eta}}^2 & \lesssim \jap{t}^2\norm{\abs{\partial_z}^{\delta} f}_{L^{\sigma_0}_M}^2 \left(\int_{\ell} \frac{\abs{\ell}}{\abs{k-\ell}^{\delta}} \jap{\ell,\ell t}^{\sigma_0} \abs{\hat{\rho}(t,\ell)}  \dd\ell \right)^2 \\ 
 & \lesssim \epsilon^4 \jap{t}^{2\delta-6}. 
\end{align*}
Therefore, both of the time-integrals in \eqref{eq:final} are absolutely convergent in $L^2_{k,\eta}$ (recall $0 < \delta \ll 1/2$). 
Hence, define
\begin{align*}
\widehat{h_\infty}(k,\eta) & := \hat{g}_{\mbox{{\scriptsize in}}}(k,\eta) - \int_0^\infty \hat{\rho}(\tau,k) \widehat{W}(k) k \cdot (\eta - \tau k) \hat{f}^0(\eta - k\tau) \dd\tau \\ & \quad - \int_0^\infty \int_{\ell} \hat{\rho}(\tau,\ell)\widehat{W}(\ell)\ell\cdot \left[ \eta  - \tau k\right] \hat g(\tau,k-\ell,\eta - t\ell)  \dd\ell \dd\tau. 
\end{align*}
Inequality \eqref{ineq:glidingconverg} then follows from the decay estimates on the integrands and the definition of $g$. 
\end{proof}

\section{Plasma echoes in finite regularity} \label{sec:Proofa} 
As discussed in \S\ref{sec:Echo}, the plasma echo effect is the main difficulty in deducing Landau damping. 
When attempting the estimate \eqref{ctrl:MidBoot}, one must get an
$L^2_t L^2_k \rightarrow L_t^2 L_k^2$ estimate on the integral operator:
\begin{align}
\phi(t,k) \mapsto \int_0^t \int_\ell \phi(\tau,\ell) \bar{K}(t,\tau,k,\ell) \dd \ell \dd\tau, \label{def:nonint}
\end{align}
where the so-called \emph{time-response kernel} is given by  
\begin{align} 
\bar{K}(t,\tau,k,\ell) & = \frac{\abs{k}^{1/2} \abs{\ell}^{1/2}
                         \abs{k(t-\tau)}}
{\jap{\ell}^2}\abs{\hat{g}(\tau,k-\ell,kt-\ell \tau)}, \label{def:trK}
\end{align}
as will be derived in \S\ref{sec:Ereac} below. This kernel measures
the maximal strength at which the $\ell$-th mode of the density at
time $\tau$ can force the $k$-th mode of the density at time $t$
through the nonlinear interaction with $g$ at mode
$k-\ell,kt-\ell \tau$ at time $\tau$.  By \eqref{ctrl:LowLinftyf}, we estimate
\begin{align}
  \bar{K}(t,\tau,k,\ell) & \lesssim
                           \sqrt{K_2}\epsilon\frac{\abs{k}^{1/2}
                           \abs{\ell}^{1/2} 
                           \jap{\tau}}{\jap{\ell}^2\jap{k-\ell,kt-\ell\tau}^{\sigma_1 - 1}};  \label{def:barK} 
\end{align}
for notational convenience we define $\beta:= \sigma_1 - 1$. 
By Schur's test, it suffices to bound the
supremum of the row-sums and the supremum of the column-sums of
\eqref{def:barK} in order to show that the integral operator
\eqref{def:nonint} is bounded.  This is the content of this
section, proved below in Lemmas \ref{lem:Moment} and \ref{lem:dual}.
Similar time-response kernels arose in \cite{BMM13} and
\cite{MouhotVillani11} -- the primary new insight here is the fact
that we can prove Lemmas \ref{lem:Moment} and \ref{lem:dual} in finite regularity.

It is clear that the row and column sums of \eqref{def:barK} are dominated by contributions from large $\tau$ and where $kt - \ell \tau$ is small, only possible when $k$ and $\ell$ are nearly co-linear.  
On $\Torus^d_x$, the one dimensional reductions used in the proofs of the analogous lemmas in \cite{MouhotVillani11,BMM13} are essentially reductions to co-linear resonant frequencies.  
In the proofs of Lemmas \ref{lem:Moment} and \ref{lem:dual}, we will separate the approximately co-linear `resonant' frequencies from the `non-resonant' frequencies with a time-varying cut-off. 
The fact that we can take the cut-off shrinking in time is due to the dispersion encoded in the free transport on the frequency side, $\partial_t + k\cdot \grad_\eta$.    
We will then use that the `resonant' frequencies comprise a small set which shrinks in time whereas on the `non-resonant' frequencies, $\bar{K}$ has much better estimates. 
The cut-off is then chosen to balance both requirements; it is in this balance where $d \geq 3$ is used.

\begin{lemma}[Time response estimate I] \label{lem:Moment}
Under the bootstrap hypotheses \eqref{ctrl:Boot}, there holds
\begin{align*} 
\sup_{t \in [0,T^\star]} \sup_{k \in \Real^3}\int_0^t \int_{\ell} \bar{K}(t,\tau,k,\ell) \dd\tau  \dd\ell \lesssim \sqrt{K_2} \epsilon.  
\end{align*} 
\end{lemma}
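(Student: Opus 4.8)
The goal is to bound
\[
\sup_{t,k}\int_0^t\!\!\int_\ell \sqrt{K_2}\,\epsilon\,\frac{\abs{k}^{1/2}\abs{\ell}^{1/2}\jap{\tau}}{\jap{\ell}^2\jap{k-\ell,kt-\ell\tau}^{\beta}}\dd\ell\dd\tau \lesssim \sqrt{K_2}\,\epsilon,
\]
so the proof plan is to pull out the $\sqrt{K_2}\,\epsilon$ and show the remaining integral is $O(1)$ uniformly in $t$ and $k$. First I would observe that the $\abs{k}^{1/2}/\jap{\ell}^2$ prefactor must interact with $\jap{k-\ell,kt-\ell\tau}^{-\beta}$; since $\beta=\sigma_1-1$ is large, we have enormous decay to spend. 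The dangerous region is where $kt-\ell\tau$ is small, i.e. $\ell$ nearly equals $(t/\tau)k$, which forces $\ell$ nearly colinear with $k$. So I would split the $\ell$-integral (for each fixed $t,k,\tau$) into a ``resonant'' set $R$ where $\ell$ is close to the line $\Real k$ within a time-dependent angular width, and its complement.

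For the resonant set, the point is that it is a thin tube around a line, hence small in $\Real^3$. Writing $\ell = \lambda \hat k + \ell_\perp$ with $\hat k = k/\abs{k}$, the condition $\abs{kt-\ell\tau}$ small forces both $\abs{\lambda - \abs{k}t/\tau}$ small and $\abs{\ell_\perp}$ small; the key gain in $d=3$ is that $\ell_\perp$ ranges over a $2$-dimensional ball whose radius we get to choose shrinking in time. Concretely, on the region where $\abs{kt-\ell\tau}\le \rho(\tau)$ for a cutoff $\rho(\tau)$ to be optimized, one has $\abs{\ell_\perp}\lesssim \rho(\tau)/\tau$, so the $\ell_\perp$-measure contributes a factor $\lesssim (\rho(\tau)/\tau)^{d-1}=(\rho(\tau)/\tau)^2$, and the $\lambda$-integral against $\jap{\cdot}^{-\beta}$ (in the one remaining direction) converges. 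On the complement, $\jap{k-\ell,kt-\ell\tau}^{-\beta} \le \jap{\rho(\tau)}^{-\beta}\jap{k-\ell}^{-(\beta-\text{stuff})}$ — more precisely I would peel off, say, $\beta/2$ powers to control the $\ell$-integral of $\abs{\ell}^{1/2}\jap{\ell}^{-2}\jap{k-\ell}^{-\beta/2}$ (which is $\lesssim \jap{k}^{1/2}$ or better by Young/Peetre), leaving $\jap{\rho(\tau)}^{-\beta/2}$ as the gain from being non-resonant. One also uses $\abs{k}^{1/2}\lesssim \jap{\ell}^{1/2}\jap{k-\ell}^{1/2}$ to absorb the $\abs{k}^{1/2}$ prefactor.

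Then one is left with a scalar $\tau$-integral roughly of the form $\int_0^t \jap{\tau}\big[(\rho(\tau)/\tau)^{2}\,c_k + \jap{\rho(\tau)}^{-\beta/2}\,c_k'\big]\dd\tau$ where $c_k,c_k'$ are $O(1)$ (after using $\sigma_1\ge 11$ so all $\ell$-integrals converge). The cutoff $\rho(\tau)$ is then chosen to balance the two terms: something like $\rho(\tau) = \tau^{a}$ with $a$ small makes the resonant term $\jap{\tau}\tau^{2a-2}$ integrable for small $a$, while the non-resonant term $\jap{\tau}\tau^{-a\beta/2}$ is integrable once $\beta$ is large enough — this is precisely where $d\ge 3$ (giving the power $2$ rather than $1$ in $(\rho/\tau)^{d-1}$) and the largeness of $\sigma_1$ enter, and where $d=2$ would fail. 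The main obstacle is bookkeeping the interplay of the three weights $\abs{k}^{1/2}$, $\jap{\ell}^{-2}$, $\jap{k-\ell,kt-\ell\tau}^{-\beta}$ so that after splitting, \emph{every} residual integral in $\ell$, in $\lambda$, in $\ell_\perp$, and finally in $\tau$ converges uniformly in $k$ — in particular verifying that the $\abs{k}^{1/2}$ is genuinely harmless (it is, since the $\jap{k-\ell}$ decay dominates it) and that the colinearity geometry really does produce the $(d-1)$-dimensional small factor. Everything else is routine one-variable estimates.
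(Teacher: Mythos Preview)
Your strategy is the paper's strategy: split $\ell$ into a resonant tube around the line $\Real k$ and its complement, gain the $(d-1)$-dimensional measure of the tube on the resonant piece, and gain a large lower bound on $\jap{kt-\ell\tau}$ on the complement.  But your bookkeeping in the resonant region has a real gap.  With your cutoff $\rho(\tau)=\tau^a$, the term you call ``$\jap{\tau}\tau^{2a-2}$'' is $\sim \tau^{2a-1}$ at infinity and is \emph{not} integrable for any $a>0$, so the balancing you describe cannot close as written.  The missing factor is an extra $\tau^{-1}$ (equivalently $\abs{\ell_\parallel}^{-1}$) coming from integrating the one-dimensional variable against the $\jap{kt-\ell_\parallel\tau}$ part of the kernel, not the $\jap{k-\ell}$ part.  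The paper obtains this by doing the $\tau$-integral \emph{before} the $\ell_\parallel$-integral: for fixed $\ell_\parallel$, $\int \jap{kt-\ell_\parallel\tau}^{-m}\,\dd\tau \lesssim \abs{\ell_\parallel}^{-1}$, and then $\int \abs{\ell_\parallel}^{-1/2}\jap{k-\ell_\parallel}^{-m'}\,\dd\ell_\parallel \lesssim 1$ uniformly in $k$.  If you prefer to integrate $\lambda$ first, you must extract this $\tau^{-1}$ from $\int \jap{kt-\lambda\tau}^{-m}\,\dd\lambda$, which you did not do (you only said the $\lambda$-integral ``converges'').

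Two smaller points.  First, you define the resonant set by $\abs{kt-\ell\tau}\le\rho(\tau)$, which pins down \emph{all three} components of $\ell$ (a small ball), yet you then argue as if only $\ell_\perp$ is restricted and $\lambda$ is free (a tube).  These are different decompositions; the tube $\{\abs{\ell_\perp}\le r(\tau)\}$ is the one you want, and its complement gives $\abs{kt-\ell\tau}\ge \tau\abs{\ell_\perp}\ge \tau r(\tau)$, not merely $\rho(\tau)$.  Second, the paper puts a $\abs{k}^b$ factor (with $b=\beta^{-1}$) into the tube radius precisely so that on the non-resonant set one gets $\jap{kt-\ell\tau}^{-\beta/2}\lesssim \abs{k}^{-1/2}\jap{\tau}^{-\beta(1-\zeta)/2}$, killing the dangerous $\abs{k}^{1/2}$ in one stroke; your Peetre route $\abs{k}^{1/2}\lesssim\jap{\ell}^{1/2}\jap{k-\ell}^{1/2}$ should work but will require you to check that the residual $\ell$-integral is bounded uniformly in $k$.
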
 
\begin{proof}
First, we eliminate irrelevant early times: for $\beta > 4$ we have, 
\begin{align*}   
\int_0^{\min(1,t)} \int_{\ell} \frac{\jap{\tau}\abs{\ell}^{1/2} \abs{k}^{1/2}}{\jap{\ell}^2\jap{k-\ell,kt-\ell \tau}^{\beta}}  \dd\ell \dd\tau & \lesssim 1.     
\end{align*} 
For a fixed $k\in \Real^3$ and all $\ell \in \Real^3$ define
\begin{align*} 
\ell_{||} = \frac{k \cdot \ell}{\abs{k}^2} k \\ 
\ell_{\perp} = \ell - \ell_{||},   
\end{align*} 
the co-linear and perpendicular components. 
Define the following parameters
\begin{subequations} \label{def:zetab} 
\begin{align} 
\zeta &\in \left(\frac{4}{5},1\right) \\ 
b & = \beta^{-1},  
\end{align}
\end{subequations}  
where we will choose $\beta$ such that at least $b < 1/6$. 
Define the two subregions of resonant $\ell$ and non-resonant $\ell$: 
\begin{align*} 
I_{R} & = I_{R}(\tau,k)  = \set{\ell: \abs{\ell_{\perp}} < (1+ \tau)^{-\zeta}\abs{k}^b } \\ 
I_{NR} & = I_{NR}(\tau,k) = \set{\ell: \abs{\ell_{\perp}} \geq (1+ \tau)^{-\zeta}\abs{k}^b }.
\end{align*}
The set $I_{R}$ denotes the frequencies that can resonate strongly with frequency $k$ and is a cylinder around the line containing $k$ which is shrinking in time. 
Physically, $I_{R}$ is restricting to the frequencies which spend a long time  sufficiently aligned with $k$.
The dispersive effect is highlighted due to the fact that we can shrink the cross-sectional area of the cylinder in time. 
In $I_{R}$, for each $\ell$ with $\ell= \ell_{||}$ we can associate a disk of radius $(1+ \tau)^{-\zeta}\abs{k}^b$ which lies in the resonant region. 
We first integrate over this 2D disk; this is where we are going to exploit that $\ell \in \Real^d$ with $d = 3$ (note also that we have used the inequality $\abs{x+y}^{1/2} \leq \abs{x}^{1/2} + \abs{y}^{1/2}$ for $x,y > 0$):  
\begin{align} 
\int_{\min(1,t)}^t \int_{I_{R}}  \frac{\jap{\tau}\abs{\ell}^{1/2}\abs{k}^{1/2}}{\jap{\ell}^2 \jap{k-\ell,kt-\ell \tau}^{\beta}}  \dd\ell \dd\tau & \lesssim \int_{\min(1,t)}^t \int_{I_{R}} \frac{\jap{\tau} \abs{k}^{1/2} \left(\abs{\ell_{||}}^{1/2} + \abs{\ell_{\perp}}^{1/2}\right)}{\jap{\ell_{||}}^2 \jap{k-\ell,kt-\ell\tau}^{\beta}}  \dd\ell \dd\tau \nonumber  \\ 
& \hspace{-5cm} \lesssim \int_{\min(1,t)}^t \int_{\Real} \frac{\jap{\tau} \abs{k}^{1/2}}{\jap{\ell_{||}}^2 \jap{k-\ell_{||},k t-\ell_{||}\tau}^{\beta}} \frac{\abs{k}^{2b}}{(1 + \tau)^{2\zeta}}\left(\abs{\ell_{||}}^{1/2} + \frac{\abs{k}^{b/2}}{(1 + \tau)^{\zeta/2}}\right)  \dd\ell_{||} \dd\tau \nonumber \\ 
& \hspace{-5cm} = \mathcal{I}_1 + \mathcal{I}_2. \label{ineq:RR} 
\end{align} 
In particular, $\abs{k}^{2b}(1+\tau)^{-2\zeta} = \abs{k}^{(d-1)b}(1+\tau)^{-(d-1)\zeta}$, and hence the argument extends to all $d \geq 3$.  
Then, requiring that $\zeta > 1/2$, $2b + 1/2 < 2$, and $\beta > 4$ we get 
\begin{align*} 
\mathcal{I}_1 & \lesssim \int_{\min(1,t)}^t \int_{\Real}  \frac{\abs{\ell_{||}}^{1/2}}{\jap{k-\ell_{||},kt-\ell_{||}\tau}^{\beta- 1/2 - 2b}}  \dd\ell_{||} \dd\tau \\ 
&  \lesssim_\beta \int_{\Real}  \frac{1}{\abs{\ell_{||}}^{1/2}\jap{k-\ell_{||}}^{\beta-3}}  \dd\ell_{||} \\ 
& \lesssim_{\beta} 1,   
\end{align*}
which completes the first term in \eqref{ineq:RR}.  
For the second term in \eqref{ineq:RR} we require $\zeta > 4/5$ and $1 + 5b < 4$:    
\begin{align*} 
\mathcal{I}_2 &\lesssim \int_{\min(1,t)}^t \int_{\Real} \frac{\abs{k}^{1/2 + 5b/2}}{\jap{\ell_{||}}^2 \jap{k-\ell_{||},kt-\ell_{||}\tau}^{\beta}\jap{\tau}^{5\zeta/2-1}}  \dd\ell_{||} \dd\tau  \\
 & \lesssim_\beta \int_{\Real} \frac{1}{\jap{k-\ell_{||}}^{\beta-1/2-3b}}   \dd\ell_{||} \\
& \lesssim_\beta 1.  
\end{align*} 
This completes the treatment of the resonant region in \eqref{ineq:RR}. 

Turn next to the $I_{NR}$ region. 
In this region, 
\begin{align*} 
\abs{kt - \ell \tau} \gtrsim \tau \abs{\ell_{\perp}} \geq \frac{\tau}{(1 + \tau)^{\zeta}} \abs{k}^b. 
\end{align*} 
Therefore, using that $b = \beta^{-1}$
\begin{align*} 
\int_{\min(1,t)}^t \int_{I_{NR}} \frac{\abs{k}^{1/2}\abs{\ell}^{1/2}\jap{\tau}}{\jap{\ell}^2 \jap{k-\ell,kt-\ell\tau}^{\beta}}  \dd\ell \dd\tau  &\lesssim \int_{\min(1,t)}^t \int_{I_{NR}} \frac{\jap{\tau}\abs{k}^{1/2}\abs{\ell}^{1/2}}{\jap{\ell}^2 \jap{k-\ell,kt - \ell \tau}^{\frac{\beta}{2}} \abs{k}^{\frac{\beta b}{2}} \jap{\tau}^{\frac{\beta}{2}(1-\zeta)}}  \dd\ell \dd\tau \\ 
& \lesssim \int_{\min(1,t)}^t \int_{I_{NR}}\frac{\abs{\ell}^{1/2}}{\jap{\ell}^2 \jap{k-\ell,kt - \ell \tau}^{\frac{\beta}{2}} \jap{\tau}^{\frac{\beta}{2}(1-\zeta)-1}}  \dd\ell \dd\tau \\ 
& \lesssim \int_{\min(1,t)}^t \int_{\Real^3} \frac{\abs{\ell}^{1/2}}{\jap{k-\ell,kt - \ell \tau}^{\frac{\beta}{2}-1} \jap{\tau}^{\frac{\beta}{2}(1-\zeta)-1}}  \dd\ell \dd\tau.   
\end{align*}
This integral is uniformly bounded provided that (using that the dimension is $3$),  
\begin{align*}
\frac{\beta}{2}-1 & > 4 \\ 
\frac{\beta}{2}(1-\zeta) & > 1, 
\end{align*}
which using that $1-\zeta < 1/5$ (and otherwise $\zeta$ is arbitrary), gives, the regularity requirement, 
\begin{align*}
\beta > 10,  
\end{align*} 
which is also sufficiently large to satisfy all of the other large-ness conditions above as well.  
\end{proof}

The next estimate is in some sense the `dual' of Lemma \ref{lem:Moment} and the proof is analogous.  

\begin{lemma}[Time response estimate II] \label{lem:dual}
Under the bootstrap hypotheses \eqref{ctrl:Boot} there holds
\begin{align*}
\sup_{\tau \in [0,T^\star]} \sup_{\ell \in \Real^d} \int_{k \in \Real^d} \int_{\tau}^{T^\star}\bar{K}_{k,\ell}(t,\tau) \dd t \dd k \lesssim \sqrt{K_2} \epsilon. 
\end{align*}
\end{lemma}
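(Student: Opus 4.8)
\textbf{Proof plan for Lemma \ref{lem:dual}.} The plan is to mirror the proof of Lemma \ref{lem:Moment}, with the roles of $k$ and $\ell$ essentially exchanged, and exploit the same dispersive cut-off. Start from the bound \eqref{def:barK},
\[
\bar{K}(t,\tau,k,\ell) \lesssim \sqrt{K_2}\,\epsilon\, \frac{\abs{k}^{1/2}\abs{\ell}^{1/2}\jap{\tau}}{\jap{\ell}^2 \jap{k-\ell,kt-\ell\tau}^{\beta}},
\]
with $\beta = \sigma_1 - 1$. Since the sup is over $\ell$ fixed and the integral runs over $k$ and $t \geq \tau$, I first dispose of the regime $t \leq \max(1,2\tau)$ (say), where $\jap{\tau} \lesssim \jap{t}$ and the $\jap{k-\ell}^{\beta}$ factor already gives a convergent $k$-integral after trading a power of $\abs{\ell}^{1/2}/\jap{\ell}^2$; one checks this contributes $O(1)$ uniformly in $\ell,\tau$.

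For the main regime $t \gtrsim \tau \gg 1$, fix $\ell \in \Real^3$ and decompose $k = k_{||} + k_{\perp}$ into components parallel and perpendicular to $\ell$ (note $k_{||} = \frac{\ell\cdot k}{\abs{\ell}^2}\ell$). The resonant set should now be $I_R = \set{k : \abs{k_{\perp}} < (1+\tau)^{-\zeta}\abs{\ell}^{b}}$ and $I_{NR}$ its complement, with the same parameters $\zeta \in (4/5,1)$ and $b = \beta^{-1}$ as in \eqref{def:zetab}. On $I_{NR}$ one has $\abs{kt - \ell\tau} \geq \tau\abs{k_{\perp}} \gtrsim \tau(1+\tau)^{-\zeta}\abs{\ell}^{b}$ — the key point being that $kt - \ell\tau$ has the same perpendicular-to-$\ell$ component as $k(t-\tau)$ when projected appropriately, so the lower bound $\tau\abs{k_\perp}$ still holds — and splitting $\jap{k-\ell,kt-\ell\tau}^{\beta} = \jap{\cdot}^{\beta/2}\jap{\cdot}^{\beta/2}$ converts the $\jap{\tau}$ factor into $\jap{\tau}^{1-\frac{\beta}{2}(1-\zeta)}$ while leaving $\jap{k-\ell,kt-\ell\tau}^{\beta/2}$; the $k$-integral of $\abs{\ell}^{1/2}\abs{k}^{1/2}\jap{\ell}^{-2}\jap{k-\ell,kt-\ell\tau}^{-\beta/2+1}$ is $O(1)$ once $\beta/2 - 1 > 4$ (dimension $3$) and absorbing $\abs{\ell}^{1/2}/\jap{\ell}^2 \lesssim 1$, and the $\tau$-integral converges when $\frac{\beta}{2}(1-\zeta) > 1$; since $1-\zeta < 1/5$ this again forces $\beta > 10$. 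On $I_R$, integrate first over the two-dimensional perpendicular disk of radius $(1+\tau)^{-\zeta}\abs{\ell}^{b}$ (this is where $d = 3$ is used), picking up a factor $\abs{\ell}^{2b}(1+\tau)^{-2\zeta}$, then over $k_{||} \in \Real$ and over $\tau$; after the substitution making $kt - \ell\tau$ the integration variable (with Jacobian $t^{-1}$, hence the gain of $\jap{\tau}^{-1}$ against the $\jap{\tau}$ in the numerator), splitting $\abs{k}^{1/2} \leq \abs{k_{||}}^{1/2} + \abs{k_{\perp}}^{1/2}$ yields two terms $\mathcal{J}_1, \mathcal{J}_2$ handled exactly as $\mathcal{I}_1, \mathcal{I}_2$ in Lemma \ref{lem:Moment}, the conditions being $\zeta > 1/2$, $2b + 1/2 < 2$, $\beta > 4$ for the first and $\zeta > 4/5$, $1 + 5b < 4$ for the second.

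The main obstacle — and the only genuinely new point relative to Lemma \ref{lem:Moment} — is verifying that the non-resonant lower bound $\abs{kt - \ell\tau} \gtrsim \tau\abs{k_{\perp}}$ and the resonant change of variables both go through cleanly when the decomposition is taken relative to $\ell$ rather than $k$. Writing $kt - \ell\tau = (k-\ell)\tau + k(t-\tau)$, the component of $kt-\ell\tau$ perpendicular to $\ell$ equals $k_{\perp}\tau + k_{\perp}(t-\tau) = k_{\perp} t$, so in fact $\abs{kt-\ell\tau} \geq \abs{k_{\perp}} t \geq \abs{k_{\perp}}\tau$ on $t \geq \tau$, which is even slightly better than claimed; and for the resonant region, on $I_R$ one has $k$ nearly parallel to $\ell$, so fixing $k_{||}$ and integrating $k_{\perp}$ over the disk, then using $k \mapsto k t - \ell\tau$ as the one-dimensional variable along $\ell$ (with the transverse part frozen), reproduces verbatim the structure of \eqref{ineq:RR}. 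Everything else is the same bookkeeping of exponents, and the regularity threshold $\beta > 10$, i.e. $\sigma_1 > 11$, is consistent with the standing assumption $\sigma_1 \geq 11$ and the choices in \eqref{ctrl:Boot}. Combining the two lemmas, Schur's test gives the claimed $L^2_t L^2_k \to L^2_t L^2_k$ bound on \eqref{def:nonint} with operator norm $\lesssim \sqrt{K_2}\,\epsilon$.
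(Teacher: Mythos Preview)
Your overall plan—mirror Lemma~\ref{lem:Moment} with the roles of $(k,\ell)$ swapped—is exactly what the paper does, and your observation that the perpendicular component of $kt-\ell\tau$ equals $k_\perp t$ is correct. But your write-up has a systematic confusion that is not cosmetic: in Lemma~\ref{lem:dual} the integration variables are $t$ and $k$ (with $\tau,\ell$ fixed), dual to Lemma~\ref{lem:Moment} where $\tau,\ell$ are integrated. You repeatedly refer to a ``$\tau$-integral'' (``the $\tau$-integral converges when $\tfrac{\beta}{2}(1-\zeta)>1$''; ``then over $k_{||}\in\Real$ and over $\tau$''), but there is none here. Correspondingly, the paper takes the resonant cutoff to depend on the \emph{integration} time, $I_R(t,\ell)=\{k:\abs{k_\perp}<(1+t)^{-\zeta}\abs{\ell}^b\}$, and first bounds $\jap{\tau}\le\jap{t}$; then the disk-area factor $(1+t)^{-2\zeta}$ kills the growing $\jap{t}$ in $I_R$, and on $I_{NR}$ the bound $\abs{kt-\ell\tau}\ge t(1+t)^{-\zeta}\abs{\ell}^b$ yields a factor $\jap{t}^{-(\frac{\beta}{2}(1-\zeta)-1)}$ which is precisely what makes the $t$-integral converge. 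With your $\tau$-dependent cutoff, the disk area $(1+\tau)^{-2\zeta}$ and the non-resonant gain $\jap{\tau}^{1-\frac{\beta}{2}(1-\zeta)}$ are constants in $t$, so neither one makes the $t$-integral converge by itself; one would instead have to integrate the remaining $\jap{k-\ell,kt-\ell\tau}^{-\beta/2}$ in $t$ to extract a $\abs{k}^{-1}$ (or $\abs{k_{||}}^{-1}$) Jacobian. That route can be made to work, but it is not the argument you wrote.

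Your early-time dismissal is also a real gap. For large $\tau$ and $t\in[\tau,2\tau]$, bounding $\jap{k-\ell,kt-\ell\tau}^\beta\ge\jap{k-\ell}^\beta$ gives an $O(1)$ $k$-integral but leaves a $t$-interval of length $\sim\tau$ times the prefactor $\jap{\tau}$, hence $\sim\tau^2$, which is not uniformly bounded; one must also use the $\jap{kt-\ell\tau}$ factor. The paper sidesteps this by only peeling off $t\le 1$ and handling all $t\ge 1$ (including $t$ comparable to $\tau$) through the $I_R/I_{NR}$ split with the $t$-dependent cutoff.
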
 
\begin{proof}
As above, we eliminate irrelevant early times: for $\beta > 4$ we have, 
\begin{align*}
\int_{k \in \Real^d} \int_{\tau}^{\min(1,T^\star)} \frac{\jap{\tau}\abs{k}^{1/2}\abs{\ell}^{1/2}}{\jap{\ell}^2\jap{k-\ell,kt-\ell\tau}^{\beta}} \dd k \dd t & \lesssim 1.  
\end{align*}
For a fixed $\ell \in \Real^3$ and all $k \in \Real^3$ define
\begin{align*} 
k_{||} = \frac{k \cdot \ell}{\abs{\ell}^2} \ell \\ 
k_{\perp} = k - k_{||},   
\end{align*} 
the co-linear and perpendicular components. 
Fix the following parameters as in the proof of Lemma \ref{lem:Moment}. 
\begin{subequations} \label{def:zetab} 
\begin{align} 
\zeta &\in \left(\frac{4}{5},1\right) \\ 
b & = \beta^{-1}. 
\end{align}
\end{subequations}  
Define the two subregions: 
\begin{align*} 
I_{R} & = I_R(t,\ell) = \set{k: \abs{k_{\perp}} < (1+ t)^{-\zeta}\abs{\ell}^b } \\ 
I_{NR} & = I_{NR}(t,\ell) = \set{k: \abs{k_{\perp}} \geq (1+ t)^{-\zeta}\abs{\ell}^b }. 
\end{align*} 
As above, $I_{R}$ is cutting out a shrinking cylinder around the line containing $\ell$ and restricting to the set of frequencies which can create strong echo cascades over the time interval of interest. 
Integrating over the 2D disk as above, 
\begin{align} 
\int_{\min(1,T^\star)}^{T^\star} \int_{I_{R}} \frac{\jap{t}\abs{\ell}^{1/2}\abs{k}^{1/2}}{\jap{\ell}^2 \jap{k-l,kt-l\tau}^{\beta}} \dd k  \dd t & \lesssim \int_{\min(1,T^\star)}^{T^\star} \int_{I_{R}} \frac{\jap{t}\abs{\ell}^{1/2}\abs{k}^{1/2}}{ \jap{\ell}^2 \jap{k_{||}-\ell,k_{||}t-\ell \tau}^{\beta}} \dd k \dd t \nonumber  \\
& \hspace{-5cm} \lesssim \int_{\min(1,T^\star)}^{T^\star} \int_{\Real} \frac{\jap{t} \abs{\ell}^{1/2} }{ \jap{\ell}^2 \jap{k_{||}-\ell,k_{||}t - \ell \tau}^{\beta}} \frac{\abs{\ell}^{2b}}{(1 + t)^{2\zeta}}\left(\abs{k_{||}}^{1/2} + \frac{\abs{\ell}^{b/2}}{(1 + t)^{\zeta/2}}\right) d k_{||} \dd t \nonumber \\ 
& = \mathcal{I}_1 + \mathcal{I}_2.  \label{ineq:RR2} 
\end{align} 
Then using that $\zeta > 1/2$ and $\beta$ sufficiently large (equivalently, $b$ sufficiently small), 
\begin{align*} 
\mathcal{I}_1 & \lesssim \int_{\min(1,T^\star)}^{T^\star} \int_{\Real}  \frac{\abs{k_{||}}^{1/2}}{ \jap{k_{||}-\ell,k_{||}t - \ell \tau}^{\beta-1}} \dd k_{||} \dd t \\ 
&  \lesssim_b \int_{\Real}  \frac{1}{\abs{k_{||}}^{1/2} \jap{k_{||} - \ell}^{\beta-3}} \dd k_{||} \\ 
&  \lesssim_{\beta} 1, 
\end{align*} 
For the other contribution in \eqref{ineq:RR2} we use $\zeta > 4/5$ and $\beta$ sufficiently large (equivalently, $b$ sufficiently small), 
\begin{align*} 
\mathcal{I}_2 & \lesssim \int_{\min(1,T^\star)}^{T^\star} \int_{\Real} \frac{1}{ \jap{k_{||} - \ell,k_{||}t - \ell \tau}^{\beta-1-3b} (1 + t)^{5\zeta/2-1}} \dd k_{||} \dd t  \\
 & \lesssim \int_{\Real} \frac{1}{ \jap{k_{||} - \ell}^{\beta-1-3b}}  \dd k_{||} \\
& \lesssim 1.  
\end{align*} 
This completes the treatment of the resonant region in \eqref{ineq:RR2}. 

Turn now to the non-resonant $I_{NR}$ region. 
On the support of the integrand, notice that
\begin{align*} 
\abs{kt - \ell \tau} \gtrsim t \abs{k_{\perp}} \geq \frac{t}{(1 + t)^{\zeta}} \abs{\ell}^b. 
\end{align*} 
Recalling the choice $b = \beta^{-1}$ we get, 
\begin{align*} 
\int_{\min(1,T^\star)}^{T^\star} \int_{I_{NR}} \frac{\abs{k}^{1/2}\abs{\ell}^{1/2}\jap{t}}{\jap{\ell}^2 \jap{k-\ell,kt-\ell\tau}^{\beta}} \dd k \dd t  &\lesssim \int_{\min(1,T^\star)}^{T^\star} \int_{I_{NR}} \frac{\jap{t}\abs{k}^{1/2}\abs{\ell}^{1/2}}{\jap{\ell}^2\jap{k-\ell,kt - \ell \tau}^{\frac{\beta}{2}} \abs{\ell}^{1/2} \jap{t}^{\frac{\beta}{2}(1-\zeta)}} \dd k \dd t \\ 
& \lesssim \int_{\min(1,T^\star)}^{T^\star} \int_{I_{NR}} \frac{\abs{k}^{1/2}}{\jap{\ell}^{3/2}\jap{k-\ell,kt - \ell \tau}^{\frac{\beta-1}{2}} \jap{t}^{\frac{\beta}{2}(1-\zeta)-1}} \dd k \dd t. 
\end{align*} 
This integral is uniformly bounded in $\ell$ and $\tau$ if (using that the dimension is $3$),  
\begin{align*}
\frac{\beta}{2} - 1 & > 4 \\ 
\frac{\beta}{2}(1-\zeta) & > 1,
\end{align*}
as in Lemma \ref{lem:Moment} above. 
\end{proof} 

\section{Nonlinear energy estimates on $\widehat{\rho}(t,k)$}

\subsection{$L^2_k$ estimates on $\hat{\rho}$} 
From \eqref{eq:rhointegral} and the linearized damping inequality, Proposition \ref{lem:LinearL2Damping}, we have (recall $A_\sigma = \abs{k}^{1/2}\jap{k,kt}^\sigma$),  
\begin{align} 
\norm{A_{\sigma_4}\hat{\rho}}^2_{L_t^2(I)} & \lesssim \norm{A_{\sigma_4}\widehat{h_{\mbox{{\scriptsize in}}}}(k,k\cdot)}^2_{L_t^2(I)} \nonumber \\ 
& \hspace{-3cm} +  \int_0^{T_\star}\left[ A_{\sigma_4}(t,k) \int_0^t\int_{\ell} \hat{\rho}(\tau,\ell) \widehat W(\ell) \ell \cdot k(t-\tau) \hat g(\tau,k-\ell,kt-\ell\tau)  \dd\tau  \dd\ell\right]^2 \dd t. \label{ineq:Lt2ptwise}
\end{align}
To improve the $L^2_k$ estimate \eqref{ctrl:MidBoot}, we integrate in $k$ to yield, 
\begin{align} 
\norm{A_{\sigma_4}\hat{\rho}}^2_{L_t^2L_k^2(I \times \Real^3)} & \lesssim \int_k \int_0^{T_\star} \abs{ A_{\sigma_4}(k,kt) \widehat{h_{\mbox{{\scriptsize in}}}}(k,kt)}^2 \dd t \dd k  \nonumber  \\ 
& \hspace{-2cm} \quad + \int_k \int_0^{T_\star}\left[A_{\sigma_4}(t,k)\int_0^t\int_{\ell} \hat{\rho}(\tau,\ell) \widehat W(\ell) \ell \cdot k(t-\tau) \hat g(\tau,k-\ell,kt-\ell\tau)  \dd\tau  \dd\ell\right]^2 \dd t \dd k. \label{ineq:NonlinCtrl}
\end{align}
As in Lemma \ref{lem:IDconst} we have, 
\begin{align} 
\int_k \int_0^{T_\star} \abs{ \abs{k} \jap{k,kt}^{2\sigma_4} \widehat{h_{\mbox{{\scriptsize in}}}}(k,kt)}^2 \dd t \dd k & \lesssim \epsilon^2. \label{ineq:rhoL2ID} 
\end{align} 
It remains to see how to deal with the nonlinear contributions in \eqref{ineq:NonlinCtrl}. 
By the triangle inequality and \eqref{def:W}: 
\begin{align} 
& \norm{A_{\sigma_4}\hat{\rho}}^2_{L_t^2L_k^2(I \times \Real^3)} \nonumber \\
& \lesssim \epsilon^2 + \int_k \int_0^{T_\star}\left[\int_0^t\int_{\ell} \jap{k-\ell,kt-\ell \tau}^{\sigma_4} \abs{k} \abs{\hat{\rho}(\tau,\ell) \frac{\ell}{\jap{\ell}^2} \cdot k(t-\tau) \hat g(\tau,k-\ell,kt-\ell\tau)}  \dd\tau  \dd\ell\right]^2 \dd t \dd k \nonumber \\
 & \quad + \int_k \int_0^{T_\star}\left[\int_0^t\int_{\ell} \jap{\ell, \ell \tau}^{\sigma_4} \abs{k} \abs{\hat{\rho}(\tau,\ell) \frac{\ell}{\jap{\ell}^2} \cdot k(t-\tau) \hat g(\tau,k-\ell,kt-\ell\tau) }  \dd\tau  \dd\ell\right]^2 \dd t \dd k \nonumber \\
& = \epsilon^2 + T + R, \label{ineq:MomentInPf} 
\end{align}  
where we refer to $T$ and $R$ as \emph{transport} and \emph{reaction} as they are analogous to the corresponding terms named similarly in \cite{BMM13} (the terminology ``reaction'' goes back to \cite{MouhotVillani11} and ``transport'' goes back to \cite{BM13}).  

\subsubsection{Transport} 
The purpose of this section is to prove the following: 
\begin{align} 
T \lesssim  K_2K_3\epsilon^4, \label{ineq:rhoTransConc}
\end{align}
which is consistent with Proposition \ref{prop:boot} provided $\epsilon$ is chosen sufficiently small. 
By Cauchy-Schwarz, 
\begin{align*} 
T & \lesssim \int_k\int_0^{T^\star} \left[\int_0^t\int_{\ell} \abs{k(t-\tau)\jap{k-\ell,kt-\ell \tau}^{\sigma_4} \hat{g}(\tau,k-\ell,kt-\ell\tau)} \abs{k}^{1/2}\abs{\frac{\ell}{\jap{\ell}^2}  \hat \rho(\tau,\ell)} \dd\tau  \dd\ell \right]^{2} \dd t \dd k \\ 
& \lesssim \int_{k} \int_0^{T^\star} \left[\int_{\ell}\int_0^t \jap{\tau}^{5/2} \frac{\abs{\ell}}{\jap{\ell}^2} \abs{\hat{\rho}(\tau,\ell)} \dd\tau  \dd\ell \right] \\ 
& \quad\quad \times 
\left[\int_{\ell} \int_0^t\abs{k(t-\tau)\jap{k-\ell,kt-\ell \tau}^{\sigma_4} \hat{g}(\tau,k-\ell,kt-\ell\tau)}^2 \abs{k}\jap{\tau}^{-5/2} \frac{\abs{\ell}}{\jap{\ell}^2} \abs{\hat \rho(\tau,\ell)} \dd\tau  \dd\ell \right] \dd t. 
\end{align*} 
Using \eqref{ineq:dissipReq}, 
\begin{align*}  
T & \lesssim \sqrt{K_5}\epsilon \int_{k} \int_0^{T^\star} \int_{\ell} \int_0^t\abs{(\abs{k(t-\tau)}\jap{k-\ell,kt-\ell \tau}^{\sigma_4} \widehat{g}(\tau,k-\ell,kt-\ell\tau)}^2 \abs{k} \jap{\tau}^{-5/2} \frac{\abs{\ell}}{\jap{\ell}^{2}} \abs{\hat \rho(\tau,\ell)} \dd\tau  \dd\ell \dd t \dd k \\ 
& \leq \sqrt{K_5} \epsilon \int_{\ell} \int_0^{T^\star} \left(\int_{k}\abs{k}\int_{-\infty}^\infty \abs{k(t-\tau) \jap{k-\ell,kt-\ell \tau}^{\sigma_4} \widehat{g}(\tau,k-\ell,kt - \ell\tau)}^2 \dd t \dd k\right) \jap{\tau}^{-5/2} \frac{\abs{\ell}}{\jap{\ell}^2} \abs{\hat{\rho}(\tau,\ell)} \dd\tau \\ 
& \lesssim K_5 \epsilon^2\sup_{\tau \geq 0} \sup_{\ell \in \Real^3} \jap{\tau}^{-5}\left(\int_{k}\abs{k}\int_{-\infty}^\infty \abs{\abs{(kt - \ell \tau)-\tau(k-\ell)} \jap{k-\ell,kt-\ell \tau}^{\sigma_4} \widehat{g}(\tau,k-\ell,kt - \ell\tau)}^2 \dd t \dd k\right) \\ 
& = K_5\epsilon^2\sup_{\tau \geq 0} \sup_{\ell \in \Real^3} \jap{\tau}^{-5}\left(\int_{k}\int_{-\infty}^\infty \abs{\widehat{\left(\jap{t\grad_z,\grad_v}\jap{\grad_{z,v}}^{\sigma_4} g\right)}(\tau,k-\ell,\frac{k}{\abs{k}}\zeta - \ell\tau)}^2 \dd \zeta \dd k\right) \\ 
& \leq  K_5\epsilon^2\sup_{\tau \geq 0} \sup_{\ell \in \Real^3} \jap{\tau}^{-5}\left(\int_{k}\sup_{\omega\in \mathbb{S}^{2}}  \int_{-\infty}^\infty \abs{\widehat{\left(\jap{t\grad_z,\grad_v} \jap{\grad_{z,v}}^{\sigma_4} g\right)}(\tau,k-\ell,\omega\zeta - \ell\tau)}^2 \dd \zeta \dd k\right) \\ 
& \leq  K_5\epsilon^2\sup_{\tau \geq 0} \sup_{\ell \in \Real^3} \jap{\tau}^{-5}\left( \int_{k} \sup_{x\in \Real^3} \sup_{\omega\in \mathbb{S}^{2}} \int_{-\infty}^\infty \abs{\widehat{\left(\jap{t\grad_z,\grad_v} \jap{\grad_{x,v}}^{\sigma_4} g\right)}(\tau,k,\omega\zeta - x)}^2 \dd \zeta \dd k\right). 
\end{align*}  
The inside factor is an $L^2$ norm along a line in $\Real^3$,
supremumed over all possible lines, therefore, by the Sobolev trace
Lemma~\ref{lem:SobTrace}, we have from \eqref{ctrl:HiLocalizedBoot},
\begin{align*} 
  T & \lesssim K_5 \epsilon^2 \sup_{\tau \geq 0} \jap{\tau}^{-5} \sum_{\abs{\alpha} \leq M} \norm{v^\alpha \jap{t\grad_z,\grad_v} \jap{\grad}^{\sigma_4} g}^2_{2} \lesssim K_5 K_1 \epsilon^4,  
\end{align*} 
as stated in \eqref{ineq:rhoTransConc}. 
By choosing $\epsilon^2 \ll K_1 K_5$, this is consistent with Proposition \ref{prop:boot}. 

\subsubsection{Reaction} \label{sec:Ereac}
For the reaction term we will prove, 
\begin{align} 
R \lesssim  K_3\epsilon^2\norm{A_{\sigma_4}\hat{\rho}}^2_{L_t^2L^2_k}, \label{ineq:rhoReacConc}
\end{align} 
since for $\epsilon$ chosen sufficiently small, this contribution can then be absorbed on the LHS of \eqref{ineq:MomentInPf}. 
By \eqref{def:W}, 
\begin{align*} 
R  \lesssim  \int_k \int_0^{T^\star} \left[\int_0^t\int_{\ell} \abs{k}^{1/2}\abs{\hat{g}(\tau,k-\ell,kt-\ell\tau)\frac{\abs{\ell \cdot k(t-\tau)}}{\jap{\ell}^2} \jap{\ell, \ell \tau}^{\sigma_4} \hat{\rho}(\tau,\ell)}  \dd\ell \dd\tau  \right]^{2} \dd t \dd k. 
\end{align*} 
By Schur's test, it follows that 
\begin{align} 
R & \lesssim \left(\sup_{t \geq 0} \sup_{k \in \Real^3} \int_0^t\int_{\ell} \bar{K}(t,\tau,k,\ell)  \dd\ell \dd\tau\right) \left(\sup_{\tau \geq 0} \sup_{\ell\in \Real^3} \int_{k} \int_{\tau}^{T^\star}\bar{K}(t,\tau,k,\ell) \dd k \dd t\right) \norm{A_{\sigma_4}\hat{\rho}}^2_{L^2_t L_k^2}, 
\end{align} 
where the time-response kernel $\bar{K}(t,\tau,k,\ell)$ is given in \eqref{def:trK}. 
Therefore, Lemmas \ref{lem:Moment} and \ref{lem:dual} imply \eqref{ineq:rhoReacConc}. 
Putting the previous estimates together and choosing $\epsilon$ sufficiently small implies \eqref{ctrl:MidBoot}. 

\subsection{$L^\infty_k L^2_t$ estimate on $\hat{\rho}$} 
Next, it remains to see how we can get the requisite
$L^\infty_k L_t^2$ estimate on $\hat{\rho}$.  For this, we will rely
on the higher regularity controls \eqref{ctrl:MidBoot}, and
\eqref{ctrl:LoLocalizedBoot}.  
Fix $k$ arbitrary. As in \eqref{ineq:Lt2ptwise} above,
applying Lemma \ref{lem:LinearL2Damping} to \eqref{eq:rhointegral}
implies
\begin{align*}
\norm{A(\cdot,k)_{\sigma_2} \hat{\rho}(\cdot,k)}^2_{L_t^2(I)} & \lesssim \norm{\abs{k}^{1/2}\jap{k,k\cdot}^{\sigma_2} \widehat{h_{\mbox{{\scriptsize in}}}}(k,k\cdot)}^2_{L_t^2(I)} \nonumber \\ 
&  +  \int_0^{T_\star}\left[ \abs{k}^{1/2}\jap{k,kt}^{\sigma_2} \int_0^t\int_{\ell} \hat{\rho}(\tau,\ell) \widehat W(\ell) \ell \cdot k(t-\tau) \hat g(\tau,k-\ell,kt-\ell\tau)  \dd\tau  \dd\ell\right]^2 \dd t \\ 
& = L(k) + NL(k). 
\end{align*}
From \eqref{lem:IDconst} it follows that $L(k) \lesssim \epsilon^2$, so it suffices to consider the nonlinear term.
We begin by dividing into two contributions via the triangle inequality:  
\begin{align*}
NL  & \lesssim \int_0^\infty \left(\abs{k}^{1/2} \int_0^t \int_\ell \Big[\jap{\ell,\ell \tau}^{\sigma_2} + \jap{k-\ell,kt - \ell \tau}^{\sigma_2}\Big] \abs{\hat{\rho}(\tau,\ell) \ell \widehat{W}(\ell) \cdot k(t-\tau) \hat{g}(k-\ell,kt-\ell \tau)}  \dd\ell \dd\tau\right)^{2} \dd t \\ 
& \lesssim R + T. 
\end{align*}
For the $R$ term we start with Cauchy-Schwarz followed by \eqref{ctrl:MidBoot} and \eqref{ctrl:LowLinftyf}, for some $\alpha > 6$ depending on $\sigma_i$:  
\begin{align*}
R & \lesssim \int_0^\infty \left(\int_0^t \int_{\ell} \jap{\ell,\ell \tau}^{2\sigma_4} \abs{\ell} \abs{\hat{\rho}(\tau,\ell)}^2  \dd\ell \dd\tau\right) 
\left(\int_0^t \int_{\ell} \frac{\abs{\ell}}{\jap{\ell}^4} \frac{\abs{k}^{3} \abs{t-\tau}^2}{\jap{\ell,\ell \tau}^{2\sigma_4-2\sigma_2}} \abs{\hat{g}(k-\ell,kt-\ell \tau)}^2  \dd\ell \dd\tau\right) \dd t \\
& \lesssim K_2K_5\epsilon^4 \int_0^\infty \int_0^t \int_{\ell} \frac{\abs{\ell}}{\jap{\ell}^4} \frac{\abs{k}^{3} \abs{t-\tau}^2}{\jap{\ell,\ell \tau}^{2\sigma_4-2\sigma_2} \jap{k-\ell, kt -\ell \tau}^{2\sigma_1}}   \dd\ell \dd\tau  \dd t \\ 
& \lesssim  K_2K_5\epsilon^4 \int_0^\infty \int_0^t\frac{\abs{k}^{3} \abs{t-\tau}^2}{\jap{\tau}\jap{k}^4\jap{kt}^{\alpha/2}} \left(\int_\ell \frac{1}{\jap{\ell}^{3}\jap{\ell,\ell \tau}^{\alpha/2}}  \dd\ell\right) \dd\tau \dd t \\ 
& \lesssim K_2K_5\epsilon^4 \int_0^\infty \int_0^t\frac{\abs{k}^{3} \abs{t-\tau}^2}{\jap{\tau}\jap{k}^4\jap{kt}^{\alpha/2}} \frac{1}{\jap{\tau}^{3-2\delta}} \dd\tau \dd t \\ 
& \lesssim K_2K_5\epsilon^4.  
\end{align*}
Consider next $T$, using $\hat{\rho}(\tau,\ell) = \hat{g}(\tau, \ell, \ell \tau)$ and \eqref{ctrl:LowLinftyf}, followed by Cauchy-Schwarz in $\ell$ and \eqref{ctrl:LoLocalizedBoot}, again for some $\alpha > 3$ depending on the gaps between $\sigma_i$: 
\begin{align*}
T & \lesssim K_5\epsilon^2\int_0^\infty \left(\int_0^t \int_\ell \frac{\abs{k}^{1/2}\abs{\ell}}{\jap{\ell}^2 \jap{\ell,\ell \tau}^{\sigma_1}} \abs{k(t-\tau)}\jap{k-\ell,kt - \ell \tau}^{\sigma_2}\abs{\hat{g}(\tau,k-\ell,kt-\ell \tau)}  \dd\ell \dd\tau \right)^2 \dd t \\ 
& \lesssim K_5K_3\epsilon^4 \int_0^\infty \frac{\abs{k}}{\jap{kt}^{\alpha}\jap{k}} \left(\int_0^t \jap{\tau} \left(\int_\ell \frac{\abs{\ell}^{2}}{\jap{\ell}^3 \jap{\ell,\ell \tau}^{\alpha}} \frac{1}{\abs{k-\ell}^{2\delta}}  \dd\ell \right)^{1/2} \dd\tau \right)^2 \dd t \\ 
& \lesssim K_5K_3\epsilon^4 \int_0^\infty \frac{\abs{k}}{\jap{kt}^{\alpha}\jap{k}} \left(\int_0^t \jap{\tau} \left(\frac{1}{\jap{\tau}^{5-4\delta}}\right)^{1/2} \dd\tau \right)^2 \dd t \\ 
& \lesssim K_5K_3\epsilon^4 \int_0^\infty \frac{\abs{k}}{\jap{kt}^{\alpha}\jap{k}} \dd t \\ 
& \lesssim K_5K_3\epsilon^4.
\end{align*}
This completes the estimate $L^\infty_k L^2_t$ estimate on $\hat{\rho}$. 

\section{Nonlinear energy estimates on $g$}

\subsection{High norm estimates} \label{sec:Hi}
\subsubsection{Estimate on $\norm{\jap{\grad_v}g}_{H_M^{\sigma_4}}$} \label{sec:Hiv}
First consider the velocity high norm estimate on $g$ stated in Proposition \ref{prop:boot}.  
Let $\alpha \in \mathbb{N}^3$ be a multi-index.  
An energy estimate yields 
\begin{align*}
\frac{1}{2}\dt\norm{\jap{\grad_v} v^\alpha g}_{H_M^{\sigma_4}}^2 & = -\int_{k,\eta} \jap{\eta}\jap{k,\eta}^{\sigma_4} D_\eta^\alpha \hat{g}(k,\eta)\jap{\eta}\jap{k,\eta}^{\sigma_4}\left(\hat{\rho}(t,k) \widehat{W}(k) k \cdot D_\eta^\alpha\left( (\eta-kt) \ff^0(\eta-kt)\right) \right) \dd k \dd\eta \\ 
  & \hspace{-2cm} - \int_{k,\eta,\ell} \jap{\eta}\jap{k,\eta}^{\sigma_4} D_\eta^\alpha \hat{g}(k,\eta)\jap{\eta}\jap{k,\eta}^{\sigma_4}\left(\hat{\rho}(t,\ell) \widehat{W}(\ell) \ell \cdot D_\eta^\alpha \left( (\eta-kt) \hat{g}(k-\ell,\eta-\ell t) \right) \right)  \dd\ell \dd k \dd\eta \\ 
& = L + NL. 
\end{align*}
Consider first the linear term $L$. We have 
\begin{align*}
L & \lesssim \int_{k,\eta} \jap{\eta}\jap{k,\eta}^{\sigma_4} \abs{D_\eta^\alpha \hat{g}(k,\eta)}\jap{kt}\jap{k,kt}^{\sigma_4}\frac{\abs{k}}{\jap{k}^2}\abs{\hat{\rho}(t,k)} \jap{\eta-kt}^{\sigma_4} \abs{D_\eta^\alpha\left((\eta-kt)\ff^0(\eta-kt)\right)} \dd k \dd\eta \\ 
& \lesssim \int_{k,\eta} \jap{\eta}\jap{k,\eta}^{\sigma_4} \abs{D_\eta^\alpha \hat{g}(k,\eta)} \jap{k,kt}^{\sigma_4} t\abs{k}^{1/2} \abs{\hat{\rho}(t,k)} \jap{\eta-kt}^{\sigma_4} \abs{D_\eta^\alpha\left((\eta-kt)\ff^0(\eta-kt)\right)} \dd k \dd\eta \\ 
& \lesssim \jap{t}\norm{\jap{\grad_v}v^\alpha g}_{H^{\sigma_4}_0} \norm{A_{\sigma_4} \hat{\rho}}_{L^2_k} \\ 
& \lesssim \frac{\delta'}{\jap{t}}\norm{\jap{\grad_v}v^\alpha g}_{H^{\sigma_4}_0}^2 + \frac{\jap{t}^3}{\delta'}\norm{A_{\sigma_4} \hat{\rho}}_{L^2_k}^2,
\end{align*}    
which for $\delta'$ sufficiently small depending only on universal constants and $f^0$ and $K_1$ sufficiently large depending only on $\delta'$ and $K_2$ is consistent with Proposition \ref{prop:boot}. 

Turn next to the nonlinear term $NL$. It is here where the full $\jap{t}^{5/2}$ growth is observed. 
First, we commute the moments and the differentiation in the transport operator: 
\begin{align}
NL & = -\int \jap{\grad_v} \jap{\grad}^{\sigma_4} \left(v^\alpha g\right) \jap{\grad_v} \jap{\grad}^{\sigma_4}\left(E(t,z+tv,v) \cdot (\grad_v - t\grad_z) (v^\alpha g) \right) \dd v \dd z \nonumber \\ 
& \quad - \int \jap{\grad_v} \jap{\grad}^{\sigma_4} \left(v^\alpha g\right) \jap{\grad_v} \jap{\grad}^{\sigma_4}\left(E(t,z+tv,v) \cdot \grad_v(v^\alpha) g \right) \dd v \dd z \nonumber \\ 
& = NL_0 + NL_M. \label{def:NLMHiV}
\end{align} 
First consider the leading order $NL_0$ term. As is standard when treating transport equations, we use the following integration by parts trick: 
\begin{align*}
NL_0 & = -\int \jap{\grad_v} \jap{\grad}^{\sigma_4} \left(v^\alpha
       g\right) \Big[ \jap{\grad_v}
       \jap{\grad}^{\sigma_4}\left(E(t,z+tv,v) \cdot (\grad_v -
       t\grad_z) (v^\alpha g) \right) \\ & \quad\quad 
                                                   - E(t,z+tv,v) \cdot
                                                   (\grad_v -
                                                   t\grad_z)\jap{\grad_v}
                                                   \jap{\grad}^{\sigma_4}
                                                   (v^\alpha g)
                                                   \Big] \dd v \dd z.
\end{align*}
On the frequency side this becomes the following, which we decompose based on which frequencies are dominant: 
\begin{align*} 
NL_0 & = -\int_{k,\eta,\ell} \jap{\eta}\jap{k,\eta}^{\sigma_4} D_\eta^\alpha \overline{\hat{g}}(k,\eta) \left(\jap{\eta}\jap{k,\eta}^{\sigma_4} - \jap{\eta-t\ell}\jap{k-\ell,\eta-t\ell}^{\sigma_4}\right) \\ & \quad\quad \times \left(\hat{\rho}(t,\ell) \widehat{W}(\ell) \ell \cdot (\eta-kt) D_\eta^\alpha \hat{g}(k-\ell,\eta-\ell t) \right)  \dd\ell \dd k \dd\eta \\
& = -\int_{k,\eta,\ell}\left(\mathbf{1}_{\abs{\ell,\ell t} \geq \abs{k-\ell, \eta-t\ell}} + \mathbf{1}_{\abs{\ell,\ell t} \leq \abs{k-\ell, \eta-t\ell}} \right) \jap{\eta}\jap{k,\eta}^{\sigma_4} D_\eta^\alpha \overline{\hat{g}}(k,\eta) \\ & \quad\quad \times \left(\jap{\eta}\jap{k,\eta}^{\sigma_4} - \jap{\eta-t\ell}\jap{k-\ell,\eta-t\ell}^{\sigma_4}\right) \left(\hat{\rho}(t,\ell) \widehat{W}(\ell) \ell \cdot (\eta-kt) D_\eta^\alpha \hat{g}(k-\ell,\eta-\ell t) \right)  \dd\ell \dd k \dd\eta \\ 
& = R + T,  
\end{align*} 
where the terminology is again ``reaction'' and ``transport'' in analogy with \eqref{ineq:MomentInPf} (and \cite{BM13}). 
Consider the leading order $R$, 
\begin{align*} 
R & \lesssim \int_{k,\eta,\ell} \mathbf{1}_{\abs{\ell,\ell t} \geq \abs{k-\ell, \eta-t\ell}} \jap{\eta}\jap{k,\eta}^{\sigma_4} \abs{D_\eta^\alpha \hat{g}(k,\eta)} \jap{\ell t}\jap{\ell,\ell t}^{\sigma_4} \abs{\hat{\rho}(t,\ell)} \frac{\abs{\ell}}{\jap{\ell}^2} \abs{\eta-kt} \abs{\hat{g}(k-\ell,\eta-\ell t)}   \dd\ell \dd k \dd\eta \\ 
& \lesssim \int_{k,\eta,\ell} \mathbf{1}_{\abs{\ell,\ell t} \geq \abs{k-\ell, \eta-t\ell}} \jap{\eta}\jap{k,\eta}^{\sigma_4} \abs{D_\eta^\alpha \hat{g}(k,\eta)} \frac{\jap{\ell t} \abs{\ell t}}{\jap{\ell}^2}\jap{\ell,\ell t}^{\sigma_4} \abs{\hat{\rho}(t,\ell)} \abs{\widehat{(\jap{\grad}g)}(k-\ell,\eta-\ell t)}   \dd\ell \dd k \dd\eta \\ 
& \lesssim \epsilon\jap{t}^2\norm{\jap{\grad_v} v^\alpha g}_{H^{\sigma_4}_0} \norm{A_{\sigma_4} \hat{\rho}}_{L^2_k} \\ 
& \lesssim \frac{\delta'}{\jap{t}}\norm{\jap{\grad_v}v^\alpha g}_{H^\sigma_0}^2 + \epsilon^2\frac{\jap{t}^5}{\delta'}\norm{A_{\sigma_4} \hat{\rho}}_{L^2_k}^2,
\end{align*}
which for $\delta'$ sufficiently small depending only on universal constants and $\epsilon$ sufficiently small, is consistent with Proposition \ref{prop:boot}. 
This completes the treatment of the reaction term $R$. 

Turn next to the transport term. We use the following inequality, which follows from the mean value theorem and holds on the support of the integrand:  
\begin{align*}
\jap{\eta}\jap{k,\eta}^\sigma - \jap{\eta-t\ell}\jap{k-\ell,\eta-t\ell}^\sigma & =  \jap{\eta}\left(\jap{k,\eta}^\sigma - \jap{k-\ell,\eta-t\ell}^\sigma\right) + \left(\jap{\eta} - \jap{\eta-t \ell} \right)\jap{k-\ell,\eta-t\ell}^\sigma \\ 
& \lesssim  \jap{\eta}\jap{k-\ell,\eta-t\ell}^{\sigma-1}\abs{\ell,\ell t} + \jap{t\ell}\jap{k-\ell,\eta-t\ell}^\sigma \\ 
& \lesssim \jap{\ell,\ell t}^2\left(\jap{\eta - t\ell}\jap{k-\ell,\eta-t\ell}^{\sigma-1} + \jap{k-\ell,\eta-t\ell}^\sigma\right). 
\end{align*}
Applying this to $T$ implies 
\begin{align*}
T & \lesssim \int_{k,\eta,\ell}\mathbf{1}_{\abs{\ell,\ell t} \leq \abs{k-\ell, \eta-t\ell}} \jap{\eta}\jap{k,\eta}^{\sigma_4} \abs{D_\eta^\alpha \hat{g}(k,\eta)}\left(\jap{\eta - t\ell}\jap{k-\ell,\eta-t\ell}^{\sigma_4-1} + \jap{k-\ell,\eta-t\ell}^{\sigma_4}\right) \\ 
& \quad\quad \times \abs{\eta-kt} \abs{D_\eta^\alpha \hat{g}(k-\ell,\eta-\ell t)} \abs{\ell}\jap{\ell,\ell t}^2\abs{\hat{\rho}(t,\ell)}  \dd\ell \dd k \dd\eta \\
& = T_{1} + T_{2}.  
\end{align*}
For $T_{1}$ we use \eqref{ineq:dissipReq}, 
\begin{align*}
T_{1} & \lesssim \int_{k,\eta,\ell} \jap{\eta}\jap{k,\eta}^{\sigma_4} \abs{D_\eta^\alpha \hat{g}(k,\eta)}\jap{\eta - t\ell}\jap{k-\ell,\eta-t\ell}^{\sigma_4}\abs{D_\eta^\alpha \hat{g}(k-\ell,\eta-\ell t)} \abs{\ell} \jap{t}\jap{\ell,\ell t}^2\abs{\hat{\rho}(t,\ell)}  \dd\ell \dd k \dd\eta \\
& \lesssim \norm{\jap{\grad_v}v^\alpha g}^2_{H_M^\sigma} \int_\ell \abs{\ell} \jap{t}\jap{\ell,\ell t}^2\abs{\hat{\rho}(t,\ell)}  \dd\ell \\ 
& \lesssim \frac{\epsilon}{\jap{t}^{3}}\norm{\jap{\grad_v} g}_{H_M^\sigma}^2.  
\end{align*}
For $T_{2}$ we similarly use, 
\begin{align*}
T_{2} & \lesssim \int_{k,\eta,\ell} \jap{\eta}\jap{k,\eta}^\sigma \abs{D_\eta^\alpha \hat{g}(k,\eta)}\jap{k-\ell,\eta-t\ell}^{\sigma}\jap{t(k-\ell),\eta-\ell t}\abs{D_\eta^\alpha \hat{g}(k-\ell,\eta-\ell t)} \abs{\ell}\jap{\ell,\ell t}^2\abs{\hat{\rho}(t,\ell)}  \dd\ell \dd k \dd\eta \\
& \lesssim \frac{\epsilon}{\jap{t}^{4}}\norm{\jap{\grad_v} g}_{H_M^\sigma}\norm{\jap{t\grad_z,\grad_v}g}_{H_M^\sigma},
\end{align*}
which for $\eps$ sufficiently small is consistent with Proposition \ref{prop:boot}. This completes the leading order $T$ term. 

Turn next to the moment term $NL_M$ (recall \eqref{def:NLMHiV}), which we divide into high and low frequency contributions similar to $NL_0$: 
\begin{align*}
NL_M & \lesssim \sum_{\abs{\beta} = \abs{\alpha} - 1}\int_{k,\eta,\ell}\left(\mathbf{1}_{\abs{\ell,\ell t} \geq \abs{k-\ell, \eta-t\ell}} + \mathbf{1}_{\abs{\ell,\ell t} \leq \abs{k-\ell, \eta-t\ell}} \right) \jap{\eta}\jap{k,\eta}^{\sigma_4} \abs{D_\eta^\alpha \hat{g}(k,\eta)} \\ & \hspace{4cm} \times \jap{\eta}\jap{k,\eta}^{\sigma_4} \abs{\hat{\rho}(t,\ell) \widehat{W}(\ell) \ell D_\eta^{\beta} \hat{g}(k-\ell,\eta-\ell t) }  \dd\ell \dd k \dd\eta \\
& = R_M + T_M.   
\end{align*}
The $R_M$ term is treated in essentially the same way as $R$ above: 
\begin{align*}
  R_M & \lesssim \sum_{\abs{\beta} = \abs{\alpha} - 1} \int_{k,\eta,\ell} \mathbf{1}_{\abs{\ell,\ell t} \geq \abs{k-\ell, \eta-t\ell}} \jap{\eta}\jap{k,\eta}^{\sigma_4} \abs{D_\eta^\alpha \hat{g}(k,\eta)} \\ & \quad \quad \times \jap{\ell t}\jap{\ell,\ell t}^{\sigma_4} \abs{\hat{\rho}(t,\ell)} \frac{\abs{\ell}}{\jap{\ell}^2} \abs{D_\eta^{\beta} \hat{g}(k-\ell,\eta-\ell t)}   \dd\ell \dd k \dd\eta \\ 
      & \lesssim \epsilon\jap{t}\norm{\jap{\grad_v}v^\alpha g}_{H^{\sigma_4}_0} \norm{A_{\sigma_4} \hat{\rho}}_{L^2_k} \\ 
      & \lesssim \frac{\delta'}{\jap{t}}\norm{\jap{\grad_v}v^\alpha g}_{H^{\sigma_4}_0}^2 + \frac{\epsilon^2\jap{t}^3}{\delta'}\norm{A_{\sigma_4} \hat{\rho}}_{L^2_k}^2,
\end{align*}
which for $\delta'$ sufficiently small depending only on universal
constants and $\epsilon$ sufficiently small, is consistent with an improvement to \eqref{ctrl:HiLocalizedBoot}.  
For the $T_M$ treatment, we use the following, applying \eqref{ineq:dissipReq},
\begin{align*}
T_M & \lesssim \sum_{\abs{\beta} \leq \abs{\alpha}-1} \int_{k,\eta,\ell} \jap{\eta}\jap{k,\eta}^{\sigma_4} \abs{ D_\eta^\alpha \hat{g}(k,\eta)} \frac{\abs{\ell}}{\jap{\ell}^2}\abs{\hat{\rho}(t,\ell)}  \jap{\eta-t\ell}\jap{k-\ell,\eta-t\ell}^{\sigma_4} \abs{D_\eta^\beta \hat{g}(k-\ell,\eta-\ell t)}  \dd\ell \dd k \dd\eta \\ 
& \lesssim \frac{\epsilon}{\jap{t}^{4}}\norm{\jap{\grad_v} g}_{H_M^{\sigma_4}}\norm{\jap{t\grad_z,\grad_v}g}_{H_M^{\sigma_4}},  
\end{align*} 
which is sufficient as in $T_2$ above. 
This completes the estimate on $\norm{\jap{\grad_v} g}_{H_M^\sigma}$. 

\subsubsection{Estimate on $\norm{\jap{\grad_z}g}_{H_M^{\sigma_4}}$}
Next turn to the estimate on $\norm{\jap{\grad_z} g}_{H_M^{\sigma_4}}$, which proceeds similarly to that on $\norm{\jap{\grad_v} g}_{H^{\sigma_4}_M}$. 
Let $\alpha \in \mathbb{N}^3$ be a multi-index. 
An energy estimate yields 
\begin{align*}
& \frac{1}{2}\dt\norm{\jap{\grad_z} v^\alpha g}_{H_M^{\sigma_4}}^2 \\
& = -\int_{k,\eta} \jap{k}\jap{k,\eta}^{\sigma_4} D_\eta^\alpha \hat{g}(k,\eta)\jap{k}\jap{k,\eta}^{\sigma_4}\left(\hat{\rho}(t,k) \widehat{W}(k) k \cdot D_\eta^\alpha\left( (\eta-kt) \ff^0(\eta-kt)\right) \right) \dd k \dd\eta \\ 
  &  - \int_{k,\eta,\ell} \jap{k}\jap{k,\eta}^{\sigma_4} D_\eta^\alpha \hat{g}(k,\eta)\jap{k}\jap{k,\eta}^{\sigma_4}\left(\hat{\rho}(t,\ell) \widehat{W}(\ell) \ell \cdot D_\eta^\alpha \left( (\eta-kt) \hat{g}(k-\ell,\eta-\ell t) \right) \right)  \dd\ell \dd k \dd\eta \\ 
& = L + NL. 
\end{align*} 
The linear term is treated as in \S\ref{sec:Hiv}: 
\begin{align*}
L & \lesssim \int_{k,\eta} \jap{k}\jap{k,\eta}^{\sigma_4} \abs{D_\eta^\alpha \hat{g}(k,\eta)}\jap{k}\jap{k,kt}^{\sigma_4}\frac{\abs{k}}{\jap{k}^2}\abs{\hat{\rho}(t,k)} \jap{\eta-kt}^{\sigma_4} \abs{D_\eta^\alpha\left((\eta-kt)\ff^0(\eta-kt)\right)} \dd k \dd\eta \\ 
& \lesssim \int_{k,\eta} \jap{k}\jap{k,\eta}^{\sigma_4} \abs{D_\eta^\alpha \hat{g}(k,\eta)} \jap{k,kt}^{\sigma_4} \abs{k}^{1/2} \abs{\hat{\rho}(t,k)} \jap{\eta-kt}^{\sigma_4} \abs{D_\eta^\alpha\left((\eta-kt)\ff^0(\eta-kt)\right)} \dd k \dd\eta \\ 
& \lesssim \frac{\delta'}{\jap{t}}\norm{\jap{\grad_v}v^\alpha g}_{H^{\sigma_4}_0}^2 + \frac{\jap{t}}{\delta'}\norm{A_{\sigma_4} \hat{\rho}}_{L^2_k}^2,
\end{align*}
which for $\delta'$ sufficiently small depending only on universal
constants and $f^0$, and $K_1$ sufficiently large depending only on
$\delta'$, and $K_2$ is consistent with an improvement on
\eqref{ctrl:LoLocalizedBoot}.

As above in \S\ref{sec:Hiv},  we commute the moments and the differentiation in the transport operator: 
\begin{align}
NL & = -\int \jap{\grad_z} \jap{\grad}^{\sigma_4} \left(v^\alpha g\right) \jap{\grad_z} \jap{\grad}^{\sigma_4}\left(E(t,z+tv,v) \cdot (\grad_v - t\grad_z) (v^\alpha g) \right) \dd v \dd z \nonumber \\ 
& \quad - \int \jap{\grad_z} \jap{\grad}^{\sigma_4} \left(v^\alpha g\right) \jap{\grad_z} \jap{\grad}^{\sigma_4}\left(E(t,z+tv,v) \cdot \grad_v(v^\alpha) g \right) \dd v \dd z \nonumber \\ 
& = NL_0 + NL_M. \label{def:NLMHiZ}
\end{align} 
First consider the leading order $NL_0$ term, which we begin as above with an integration by parts and sub-divide based on which frequencies are dominant: 
\begin{align*}
NL_0 & = -\int \jap{\grad_z} \jap{\grad}^{\sigma_4} \left(v^\alpha
       g\right) \left[ \jap{\grad_z}
       \jap{\grad}^{\sigma_4}\left(E(t,z+tv,v) \cdot (\grad_v -
       t\grad_z) (v^\alpha g) \right) \right. \\ & \quad\quad \left.
                                                   - E(t,z+tv,v) \cdot
                                                   (\grad_v -
                                                   t\grad_z)\jap{\grad_z}
                                                   \jap{\grad}^{\sigma_4}
                                                   (v^\alpha g)
                                                   \right] \dd v \dd z \\ 
& = -\int_{k,\eta,\ell}\left(\mathbf{1}_{\abs{\ell,\ell t} \geq \abs{k-\ell, \eta-t\ell}} + \mathbf{1}_{\abs{\ell,\ell t} \leq \abs{k-\ell, \eta-t\ell}} \right) \jap{k}\jap{k,\eta}^{\sigma_4} D_\eta^\alpha \overline{\hat{g}}(k,\eta) \\ & \quad\quad \times \left(\jap{k}\jap{k,\eta}^{\sigma_4} - \jap{k-\ell}\jap{k-\ell,\eta-t\ell}^{\sigma_4}\right) \left(\hat{\rho}(t,\ell) \widehat{W}(\ell) \ell \cdot (\eta-kt) D_\eta^\alpha \hat{g}(k-\ell,\eta-\ell t) \right)  \dd\ell \dd k \dd\eta \\
& = R + T. 
\end{align*}
The reaction $R$ is treated similar to the treatment in \S\ref{sec:Hiv} (note that there is one less power of $t$ lost),  
\begin{align*} 
R & \lesssim \epsilon\jap{t}\norm{\jap{\grad_v} v^\alpha g}_{H^{\sigma_4}_0} \norm{A_{\sigma_4} \hat{\rho}}_{L^2_k} \\ 
& \lesssim \frac{\delta'}{\jap{t}}\norm{\jap{\grad_v}v^\alpha g}_{H^{\sigma_4}_0}^2 + \epsilon^2\frac{\jap{t}^3}{\delta'}\norm{A_{\sigma_4} \hat{\rho}}_{L^2_k}^2,
\end{align*}
which for $\delta'$ sufficiently small depending only on universal constants is consistent with an improvement on \eqref{ctrl:HiLocalizedBoot}. 

Similar to the analogous estimate in \S\ref{sec:Hiv}, we have
\begin{align*}
T & \lesssim \int_{k,\eta,\ell}\mathbf{1}_{\abs{\ell,\ell t} \leq \abs{k-\ell, \eta-t\ell}} \jap{k}\jap{k,\eta}^{\sigma_4} \abs{D_\eta^\alpha \hat{g}(k,\eta)}\left(\jap{k-\ell}\jap{k-\ell,\eta-t\ell}^{\sigma_4-1} + \jap{k-\ell,\eta-t\ell}^{\sigma_4}\right) \\ 
& \quad\quad \times \abs{\eta-kt} \abs{D_\eta^\alpha \hat{g}(k-\ell,\eta-\ell t)} \abs{\ell}\jap{\ell,\ell t}^2\abs{\hat{\rho}(t,\ell)}  \dd\ell \dd k \dd\eta \\
& = T_{1} + T_{2}.  
\end{align*} 
The first, $T_{1}$, is treated as above.  
\begin{align*}
T_{1} & \lesssim \int_{k,\eta,\ell} \jap{k}\jap{k,\eta}^{\sigma_4} \abs{D_\eta^\alpha \hat{g}(k,\eta)}\jap{k-\ell}\jap{k-\ell,\eta-t\ell}^{\sigma_4}\abs{D_\eta^\alpha \hat{g}(k-\ell,\eta-\ell t)} \abs{\ell} \jap{t}\jap{\ell,\ell t}^2\abs{\hat{\rho}(t,\ell)}  \dd\ell \dd k \dd\eta \\
& \lesssim \norm{\jap{\grad_z}v^\alpha g}^2_{H_0^\sigma} \int_\ell \abs{\ell} \jap{t}\jap{\ell,\ell t}^2\abs{\hat{\rho}(t,\ell)}  \dd\ell \\ 
& \lesssim \frac{\epsilon}{\jap{t}^{3}}\norm{\jap{\grad_z} f}_{H_M^\sigma}^2.  
\end{align*}
The second is treated with a slight variation: 
\begin{align*}
T_{2} & \lesssim \int_{k,\eta,\ell} \jap{k}\jap{k,\eta}^\sigma \abs{D_\eta^\alpha \hat{g}(k,\eta)}\jap{k-\ell,\eta-t\ell}^{\sigma}\jap{t(k-\ell),\eta-\ell t}\abs{D_\eta^\alpha \hat{g}(k-\ell,\eta-\ell t)} \abs{\ell}\jap{\ell,\ell t}^2\abs{\hat{\rho}(t,\ell)}  \dd\ell \dd k \dd\eta \\
& \lesssim \frac{\epsilon}{\jap{t}^{4}}\norm{\jap{\grad_z} f}_{H_M^\sigma}\norm{\jap{t\grad_z,\grad_v}f}_{H_M^\sigma},  
\end{align*}
which is still consistent with the final estimate provided $\delta < 1/2$. 

The lower order moment term $NL_M$ is treated as in \S\ref{sec:Hiv} and is omitted here for brevity. 
After collecting all the above estimates and choosing $\epsilon$ small, this completes the improvement of \eqref{ctrl:HiLocalizedBoot}. 

\subsection{The $L^\infty_t H_M^{\sigma_3}$ estimate}
In this section we improve the estimate \eqref{ctrl:LoLocalizedBoot} as stated in Proposition \ref{prop:boot}. 
For $\alpha \in \Naturals^d$, an energy estimate yields 
\begin{align*}
&\frac{1}{2}\dt \norm{\abs{\partial_z}^\delta \jap{\grad}^{\sigma_3}
  v^\alpha g}_{L^2}^2 \\
& = -\int_{k,\eta} \abs{k}^\delta \jap{k,\eta}^{\sigma_3} D_\eta^\alpha \overline{\hat{g}}(k,\eta) \abs{k}^\delta \jap{k,\eta}^{\sigma_3} \hat{\rho}(t,k) \widehat{W}(k) k \cdot D_\eta^\alpha \left((\eta-kt) \ff^0(\eta-kt)\right)  \dd k \dd\eta \\ 
  & \quad - \int_{k,\eta,\ell} \abs{k}^\delta \jap{k,\eta}^{\sigma_3} D_\eta^\alpha \overline{\hat{g}}(k,\eta) \abs{k}^\delta \jap{k,\eta}^{\sigma_3}\left(\hat{\rho}(t,\ell) \widehat{W}(\ell) \ell \cdot D_\eta^\alpha \left( (\eta-kt) \hat{g}(k-\ell,\eta-\ell t) \right) \right)  \dd\ell \dd k \dd\eta \\ 
& = L + NL. 
\end{align*}
The linear term $L$ is treated as follows,  
\begin{align*}
L & \lesssim \int_{k,\eta} \abs{k}^{\delta}\jap{k,\eta}^{\sigma_3} \abs{D_\eta^\alpha \hat{g}(k,\eta)}\jap{k,kt}^{\sigma_3}\frac{\abs{k}^{1+\delta}}{\jap{k}^2}\abs{\hat{\rho}(t,k)} \jap{\eta-kt}^{\sigma_3} \abs{D_\eta^\alpha \left((\eta-kt)\ff^0(\eta-kt)\right)} \dd k \dd\eta \\ 
L & \lesssim \int_{k,\eta} \abs{k}^{\delta}\jap{k,\eta}^{\sigma_3} \abs{D_\eta^\alpha \hat{g}(k,\eta)}\frac{\abs{k}^{1+\delta}}{\jap{k}^2\jap{k,kt}^{\sigma_4-\sigma_3}} \jap{k,kt}^{\sigma_4} \abs{\hat{\rho}(t,k)} \\ & \hspace{4cm} \times \jap{\eta-kt}^{\sigma_3} \abs{D_\eta^\alpha \left((\eta-kt)\ff^0(\eta-kt)\right)} \dd k \dd\eta \\ 
& \lesssim \frac{1}{\jap{t}^{1/2+\delta}}\int_{k,\eta} \abs{k}^{\delta} \jap{k,\eta}^{\sigma_3} \abs{D_\eta^\alpha \hat{g}(k,\eta)}\jap{k,kt}^{\sigma_4}\abs{k}^{1/2}\abs{\hat{\rho}(t,k)} \jap{\eta-kt}^{\sigma_3} \abs{D_\eta^\alpha \left((\eta-kt)\ff^0(\eta-kt)\right)} \dd k \dd\eta \\
& \lesssim \frac{1}{\jap{t}^{1/2 + \delta}} \norm{\abs{\partial_z}^{\delta} g}_{H^{\sigma_3}_M} \norm{A_{\sigma_4} \hat{\rho}}_{L^2_k},  
\end{align*}
which is sufficient to deduce
$\norm{\abs{\partial_z}^{\delta} g}_{H^{\sigma_3}_M} \lesssim_\delta
K_2\epsilon^2$
via \eqref{ctrl:MidBoot}. This is consistent with an improvement of
\eqref{ctrl:LoLocalizedBoot} by choosing $K_3$ sufficiently large relative to $K_2$ (see Remark
\ref{rmk:Ki}).

As in \S\ref{sec:Hi}, we begin the nonlinear estimate by commuting the
moments and the differentiation:
\begin{align}
NL & = -\int \abs{\partial_z}^\delta \jap{\grad}^{\sigma_3} \left(v^\alpha g\right) \abs{\partial_z}^\delta \jap{\grad}^{\sigma_3}\big[E(t,z+tv,v) \cdot (\grad_v - t\grad_z) (v^\alpha g) \big] \dd v \dd z \nonumber \\ 
& \quad - \int \abs{\partial_z}^\delta \jap{\grad}^{\sigma_3} \left(v^\alpha g\right) \abs{\partial_z}^\delta \jap{\grad}^{\sigma_3}\left(E(t,z+tv,v) \cdot \grad_v(v^\alpha) g \right) \dd v \dd z \nonumber \\ 
& = NL_0 + NL_M. \label{def:NLLow}
\end{align} 
For the leading order term, as above in \S\ref{sec:Hi}, we use the following via integration by parts and sub-dividing based on frequency: 
\begin{align*}
NL_0 & = -\int \abs{\partial_z}^\delta \jap{\grad}^{\sigma_3}
       \left(v^\alpha g\right) \left[ \abs{\partial_z}^\delta
       \jap{\grad}^{\sigma_3}\left(E(t,z+tv,v) \cdot (\grad_v -
       t\grad_z) (v^\alpha g) \right) \right. \\ & 
\hspace{4cm} \left.  - E(t,z+tv,v) \cdot (\grad_v - t\grad_z)
                                                   \abs{\partial_z}^\delta
                                                   \jap{\grad}^{\sigma_3}
                                                   (v^\alpha g)
                                                   \right] \dd v \dd z \\ 
& = -\int_{k,\eta,\ell}\left(\mathbf{1}_{\abs{\ell,\ell t} \geq \abs{k-\ell, \eta-t\ell}} + \mathbf{1}_{\abs{\ell,\ell t} \leq \abs{k-\ell, \eta-t\ell}} \right) \abs{k}^\delta  \jap{k,\eta}^{\sigma_3} D_\eta^\alpha \overline{\hat{g}}(k,\eta) \\ & \quad\quad \times \left(\abs{k}^\delta \jap{k,\eta}^{\sigma_3} - \abs{k-\ell}^\delta \jap{k-\ell,\eta-t\ell}^{\sigma_3}\right) \left(\hat{\rho}(t,\ell) \widehat{W}(\ell) \ell \cdot (\eta-kt) D_\eta^\alpha \hat{g}(k-\ell,\eta-\ell t) \right)  \dd\ell \dd k \dd\eta \\
& = R + T. 
\end{align*}
In the reaction term $R$, we use that on the support of the integrand there holds (using $\delta < 1$), 
\begin{align}
\abs{\abs{k}^\delta \jap{k,\eta}^{\sigma_3} - \abs{k-\ell}^{\delta}\jap{k-\ell,\eta-t\ell}^{\sigma_3}} & \lesssim \left(\abs{\ell}^\delta + \abs{k-\ell}^\delta\right)\jap{\ell,\ell t}^{\sigma_3}.  
\label{ineq:reac}
\end{align}
Hence, 
\begin{align*}
R & \lesssim \int_{k,\eta,\ell}\abs{k}^\delta \jap{k,\eta}^{\sigma_3} \abs{D_\eta^\alpha\hat{g}(k,\eta)} \frac{\abs{\ell}^{1+\delta}}{\jap{\ell}^2}\jap{\ell,\ell t}^{\sigma_3} \abs{\hat{\rho}(t,\ell)} \\ & \hspace{4cm} \times \left(\abs{\eta-\ell t} + t\abs{k-\ell}\right) \abs{D_\eta^\alpha \hat{g}(k-\ell,\eta-\ell t)}   \dd\ell \dd k \dd\eta \\  
& \quad + \int_{k,\eta,\ell}\abs{k}^\delta \jap{k,\eta}^{\sigma_3} \abs{D_\eta^\alpha\hat{g}(k,\eta)} \frac{\abs{\ell}}{\jap{\ell}^2}\jap{\ell,\ell t}^{\sigma_3} \abs{\hat{\rho}(t,\ell)} \\ & \hspace{4cm} \times \abs{k-\ell}^\delta\left(\abs{\eta-\ell t} + t\abs{k-\ell}\right) \abs{D_\eta^\alpha \hat{g}(k-\ell,\eta-\ell t)}   \dd\ell \dd k \dd\eta \\  
& = R_{1;V} + R_{1;Z} + R_{2;V} + R_{2;Z}, 
\end{align*}
where the sub-divisions $R_{i;V}$ vs $R_{i;Z}$ denote the terms involving $\abs{\eta-\ell t}$ and $t\abs{k-\ell}$ respectively. 
The first contribution we treat in a manner analogous to the treatment of the $L$ term above (using $\abs{\ell}^{1/2+\delta}\jap{\ell,\ell t}^{-1/2-\delta} \lesssim \jap{t}^{-1/2-\delta}$, Lemma \ref{lem:Youngs}, and Cauchy-Schwarz):   
\begin{align}
R_{1;V} & \lesssim \int_{k,\eta,\ell} \abs{k}^\delta \jap{k,\eta}^{\sigma_3} \abs{D_\eta^\alpha \hat{g}(k,\eta)} \frac{\abs{\ell}^{1/2}}{\jap{\ell}^2 \jap{t}^{1/2+\delta}} \jap{\ell,\ell t}^{\sigma_3}\abs{\hat{\rho}(t,\ell) \abs{\eta-\ell t} D_\eta^\alpha \hat{g}(k-\ell,\eta-\ell t)}   \dd\ell \dd k \dd\eta \nonumber \\
& \lesssim \frac{1}{\jap{t}^{1/2+\delta}}\norm{\abs{\partial_z}^\delta g}_{H^{\sigma_3}_M} \norm{\abs{\partial_z}^{1/2}\jap{\partial_z,\partial_z t}^{\sigma_3} \rho}_{L^2} \int_{\ell} \norm{\jap{\eta} D_\eta^\alpha \hat{g}(t,\ell,\cdot)}_{L^2_\eta}  \dd\ell \nonumber \\ 
& \lesssim \frac{1}{\jap{t}^{1/2+\delta}}\norm{\abs{\partial_z}^\delta g}_{H^{\sigma_3}_M}^2 \norm{\abs{\partial_z}^{1/2}\jap{\partial_z,\partial_z t}^{\sigma_3} \rho}_{L^2}. \label{ineq:R1V}
\end{align}
 This estimate is sufficient to improve \eqref{ctrl:LoLocalizedBoot} for $\delta > 0$ and $\epsilon$ sufficiently small by \eqref{ctrl:MidBoot}. 
Turn next to $R_{Z}$, which is treated with a slight variation (using \eqref{ctrl:LoLocalizedBoot}):  
\begin{align}
R_{1;Z} & \lesssim \int_{k,\eta,\ell} \abs{k}^\delta \jap{k,\eta}^{\sigma_3} \abs{D_\eta^\alpha \hat{g}(k,\eta)} \frac{\abs{\ell}^{1+\delta}t}{\jap{\ell}^2}\jap{\ell,\ell t}^{\sigma_3} \abs{\hat{\rho}(t,\ell) \abs{\eta-\ell t} \abs{k-\ell} D_\eta^\alpha \hat{g}(k-\ell,\eta-\ell t)}   \dd\ell \dd k \dd\eta \nonumber \\
& \lesssim t\norm{\abs{\partial_z}^\delta g}_{H^{\sigma_3}_M}^2 \int_{\ell} \frac{\abs{\ell}^{1+\delta}}{\jap{\ell}^{2}} \jap{\ell, \ell t}^{\sigma_3} \abs{\hat{\rho}(t,\ell)}  \dd\ell \nonumber \\ 
 & \lesssim  t\norm{\abs{\partial_z}^\delta g}_{H^{\sigma_3}_M}^2 \left(\int_{\ell} \frac{\abs{\ell}^{1+2\delta}}{\jap{\ell}^{4}} \jap{\ell, \ell t}^{-2\sigma_4+2\sigma_3}  \dd\ell \right)^{1/2} \norm{\abs{\partial_z}^{1/2}\jap{\partial_z,\partial_z t}^{\sigma_4} \rho}_{L^2} \nonumber \\ 
& \lesssim \frac{1}{\jap{t}^{1+\delta}}\norm{\abs{\partial_z}^\delta g}_{H^{\sigma_3}_M}^2 \norm{\abs{\partial_z}^{1/2}\jap{\partial_z,\partial_z t}^{\sigma_4} \rho}_{L^2},\label{ineq:R1Z}
\end{align}
which is sufficient to improve \eqref{ctrl:LoLocalizedBoot} for $\epsilon$ sufficiently small.  
The terms $R_{2;Z} + R_{2;V}$ are treated in the same manner as $R_{1;Z}$; the details are omitted for brevity: 
\begin{align*}
R_{1;V} + R_{1;Z} \lesssim \frac{1}{\jap{t}}\norm{\abs{\partial_z}^\delta g}_{H^{\sigma_3}_M}^2 \norm{\abs{\partial_z}^{1/2}\jap{\partial_z,\partial_z t}^{\sigma_4} \rho}_{L^2},
\end{align*}
which is sufficient to improve \eqref{ctrl:LoLocalizedBoot} for $\epsilon$ sufficiently small.  

Turn next to the transport term $T$. 
On the support of the integrand there holds (from the mean value theorem)
\begin{align*}
\abs{\abs{k}^\delta \jap{k,\eta}^{\sigma_3} - \abs{k-\ell}^{\delta}\jap{k-\ell,\eta-t\ell}^{\sigma_3}} & \lesssim \abs{k-\ell}^\delta \abs{\ell,\ell t}\jap{k-\ell,\eta-t\ell}^{\sigma_3-1} \\ & \quad + \jap{k-\ell,\eta-t\ell}^{\sigma_3}\abs{\abs{k}^\delta - \abs{k-\ell}^\delta}. 
\end{align*}
Therefore, 
\begin{align*}
T & \lesssim \int_{k,\eta,\ell}  \abs{k}^\delta \jap{k,\eta}^{\sigma_3} \abs{D_\eta^\alpha \hat{g}(k,\eta)} \frac{\abs{\ell} \abs{\ell, \ell t}}{\jap{\ell}^2}\abs{\hat{\rho}(t,\ell)}  \abs{\eta-kt} \\ & \quad\quad \times \abs{k-\ell}^\delta \jap{k-\ell,\eta-\ell t}^{\sigma_3-1} \abs{D_\eta^\alpha\hat{g}(k-\ell,\eta-\ell t)}  \dd\ell \dd k \dd\eta  \\ 
& \quad + \int_{k,\eta,\ell}\abs{k}^\delta \jap{k,\eta}^{\sigma_3} \abs{D_\eta^\alpha \hat{g}(k,\eta)} \frac{\abs{\ell}}{\jap{\ell}^2}\abs{\hat{\rho}(t,\ell)} \\ & \quad\quad \times  \abs{\eta-kt} \abs{\abs{k}^\delta - \abs{k-\ell}^\delta}\jap{k-\ell,\eta-\ell t}^{\sigma_3} \abs{D_\eta^\alpha \hat{g}(k-\ell,\eta-\ell t)}  \dd\ell \dd k \dd\eta  \\ 
& = T_1 + T_2. 
\end{align*}
First consider $T_1$. Using $\abs{\eta-kt} \lesssim \jap{t}\jap{k-\ell,\eta- t\ell}$ and \eqref{ineq:dissipReq}, there holds 
\begin{align*}
T_1 & \lesssim \int_{k,\eta,\ell} \abs{k}^\delta \jap{k,\eta}^{\sigma_3} \abs{D_\eta^\alpha \hat{g}(k,\eta)} \frac{\abs{\ell} \jap{t} \abs{\ell, \ell t}}{\jap{\ell}^2}\abs{\hat{\rho}(t,\ell)} \abs{k-\ell}^\delta \jap{k-\ell,\eta-\ell t}^{\sigma_3} \abs{D_\eta^\alpha \hat{g}(k-\ell,\eta-\ell t)}  \dd\ell \dd k \dd\eta  \\ 
& \lesssim \norm{\abs{\partial_z}^\delta  g}_{H^{\sigma_3}_M}^2 \int \frac{\abs{\ell} \jap{t} \abs{\ell,\ell t}}{\jap{\ell}^2} \abs{\rho(t,\ell)}  \dd\ell \\ 
& \lesssim \frac{\epsilon}{\jap{t}^{3}} \norm{\abs{\partial_z}^\delta g}_{H^{\sigma_3}_M}^2. 
\end{align*}
For $T_2$ we instead have the following, using $\abs{\abs{k}^\delta - \abs{k-\ell}^\delta} \leq \abs{\ell}^\delta$ and \eqref{ineq:dissipReq}, 
\begin{align*}
T_2 & \lesssim \int_{k,\eta,\ell} \abs{k}^\delta \jap{k,\eta}^{\sigma_3} \abs{D_\eta^\alpha \hat{g}(k,\eta)} \frac{\abs{\ell}^{1+\delta} }{\jap{\ell}^2}\abs{\hat{\rho}(t,\ell)}  \abs{\eta-kt}\jap{k-\ell,\eta-\ell t}^{\sigma_3} \abs{D_\eta^\alpha \hat{g}(k-\ell,\eta-\ell t)}  \dd\ell \dd k \dd\eta  \\ 
& \lesssim \norm{\abs{\partial_z}^\delta f}_{H^{\sigma_3}_M} \left(\jap{t}^{-5/2}\norm{\jap{\grad_z t,\grad_v} g}_{H^{\sigma_4}_M}\right) \jap{t}^{5/2}\int \frac{\abs{\ell}^{1+\delta}}{\jap{\ell}^2} \abs{\rho(t,\ell)}  \dd\ell \\ 
& \lesssim \frac{\epsilon}{\jap{t}^{3/2+\delta}} \norm{\abs{\partial_z}^\delta g}_{H^{\sigma_3}_M} \left(\jap{t}^{-5/2}\norm{\jap{\grad_z t,\grad_v} g}_{H^{\sigma_4}_M}\right), 
\end{align*} 
which suffices to improve \eqref{ctrl:LoLocalizedBoot} for $\epsilon$ sufficiently small by \eqref{ctrl:HiLocalizedBoot}.

Turn to the lower order moments. 
We divide the treatment into reaction and transport as above: 
\begin{align*}
NL_M & \leq \sum_{\abs{\beta} = \abs{\alpha} - 1}\int_{k,\eta,\ell}\left(\mathbf{1}_{\abs{\ell,\ell t} \geq \abs{k-\ell, \eta-t\ell}} + \mathbf{1}_{\abs{\ell,\ell t} \leq \abs{k-\ell, \eta-t\ell}} \right) \abs{k}^\delta  \jap{k,\eta}^{\sigma_3}\abs{ D_\eta^\alpha \hat{g}(k,\eta)} \\ & \hspace{4cm} \times \abs{k}^\delta  \jap{k,\eta}^{\sigma_3} \abs{\hat{\rho}(t,\ell) \widehat{W}(\ell) \ell D_\eta^\beta \hat{g}(k-\ell,\eta-\ell t)}  \dd\ell \dd k \dd\eta \\ 
& = R_M + T_M. 
\end{align*}
For the $R_M$ term, we may treat as $R_{1;V}$ and $R_{1;Z}$ above. Indeed, using \eqref{ineq:reac} and treating the resulting two terms as in \eqref{ineq:R1V} and \eqref{ineq:R1Z} respectively,
\begin{align*}
R_M & \lesssim \sum_{\abs{\beta} = \abs{\alpha} - 1}\int_{k,\eta,\ell} \abs{k}^\delta \jap{k,\eta}^{\sigma_3} \abs{D_\eta^\alpha \hat{g}(k,\eta)} \frac{\abs{\ell}^{1+\delta} + \abs{\ell}\abs{k-\ell}^\delta}{\jap{\ell}^2} \jap{\ell,\ell t}^{\sigma_4}\abs{\hat{\rho}(t,\ell)} \abs{D_\eta^\beta \hat{g}(k-\ell,\eta-\ell t)}   \dd\ell \dd k \dd\eta \\
& \lesssim \frac{1}{\jap{t}^{1/2+\delta}}\norm{\abs{\partial_z}^\delta g}_{H^{\sigma_3}_M}^2 \norm{\abs{\partial_z}^{1/2}\jap{\partial_z,\partial_z t}^{\sigma_4} \rho}_{L^2},
\end{align*}
which suffices to improve \eqref{ctrl:LoLocalizedBoot} for $\epsilon$ sufficiently small by \eqref{ctrl:MidBoot}.  
For $T_M$ we can use a treatment as in $T_2$:  
\begin{align*}
T_M & \lesssim \sum_{\abs{\beta} = \abs{\alpha} - 1} \int_{k,\eta,\ell} \abs{k}^\delta \jap{k,\eta}^{\sigma_3} \abs{D_\eta^\alpha \hat{g}(k,\eta)} \frac{\abs{\ell}^{1+\delta} }{\jap{\ell}^2}\abs{\hat{\rho}(t,\ell)} \jap{k-\ell,\eta-\ell t}^{\sigma_3} \abs{D_\eta^\beta \hat{g}(k-\ell,\eta-\ell t)}  \dd\ell \dd k \dd\eta  \\ 
& \lesssim \norm{\abs{\partial_z}^\delta g}_{H^{\sigma_3}_M} \left(\jap{t}^{-5/2}\norm{\jap{\grad_z t,\grad_v} g}_{H^{\sigma_3}_M}\right) \jap{t}^{5/2}\int \frac{\abs{\ell}^{1+\delta}}{\jap{\ell}^2} \abs{\rho(t,\ell)}  \dd\ell \\ 
& \lesssim \frac{\epsilon}{\jap{t}^{3/2+\delta}} \norm{\abs{\partial_z}^\delta g}_{H^{\sigma_3}_M} \left(\jap{t}^{-5/2}\norm{\jap{\grad_z t,\grad_v} g}_{H^{\sigma_4}_M}\right), 
\end{align*}
which, as above, is sufficient to improve \eqref{ctrl:LoLocalizedBoot} for $\epsilon$ sufficiently small. 

\subsection{The $L^\infty_t L^\infty_{k,\eta}$ estimate}
In this section we improve \eqref{ctrl:LowLinftyf}. 
Integrating \eqref{eq:feqn} gives:  
\begin{align*}
\jap{k,\eta}^{\sigma_1}\abs{\hat{g}(T,k,\eta)} & \leq
                                                 \jap{k,\eta}^{\sigma_1}\abs{\widehat{h_{\mbox{{\scriptsize
                                                 in}}}}(k,\eta)}  + \jap{k,\eta}^{\sigma_1}\int_0^T \abs{\hat{\rho}(t,k) k W(k) \cdot (\eta-kt) \hat{f^0}(\eta-kt)} \dd t  \\ 
& \quad +  \jap{k,\eta}^{\sigma_1} \int_0^T \int_\ell \abs{\hat{\rho}(t,\ell) \ell W(\ell) \cdot (\eta-k t) \hat{g}(t,k-\ell,\eta-\ell t)}  \dd\ell \dd t \\ 
& = I + L + NL.   
\end{align*}
For the linear term we have (using $|\widehat{f}^0(\eta-kt)| \lesssim \jap{\eta-kt}^{-\sigma_1-3/2}$) 
\begin{align*}
L & \lesssim \left(\int_0^T \jap{k,kt}^{2\sigma_2} \abs{\hat{\rho}(t,k)}^2 \abs{k} \dd t\right)^{1/2} \left(\int_0^T \abs{k} \jap{\eta-kt}^{2\sigma_1} \abs{W(k)(\eta-kt)}^2 \abs{\hat{f^0}(\eta-kt)}^2 \dd t\right)^{1/2} \\ 
& \lesssim  \epsilon. 
\end{align*}
For the nonlinear term, we use a more sophisticated estimate. 
Write, 
\begin{align*}
NL & \lesssim \int_0^T \int_\ell \abs{k}^\delta\left(\jap{\ell,\ell t}^{\sigma_1} + \jap{k-\ell,\eta-\ell t}^{\sigma_1}\right)\abs{\hat{\rho}(t,\ell)} \frac{\abs{\ell}}{\jap{\ell}^2} \abs{\eta-kt} \abs{\widehat{g}(t,k-\ell,\eta-\ell t)}  \dd\ell \dd t \\ 
& = NL_{HL} + NL_{LH}.  
\end{align*}
The easier is $NL_{HL}$, which is handled via the following by
\eqref{ctrl:MidLinftyrho} and \eqref{ctrl:LowLinftyf} (also using that
$\sigma_2 - \sigma_1$ and $\sigma_1$ are sufficiently large),
\begin{align*}
NL_{HL} & \lesssim \int_\ell \left( \int_0^T \abs{\rho(t,\ell)}^2 \abs{\ell} \jap{\ell,\ell t}^{2\sigma_2} \dd t \right)^{1/2} \\ & \quad\quad\quad \times \left( \int \frac{\abs{\ell} \abs{\eta-kt}^2}{\jap{\ell}^4 \jap{\ell,\ell t}^{2(\sigma_2 - \sigma_1)} \jap{k-\ell,\eta-t\ell}^{2\sigma_1}} \jap{k-\ell,\eta-t\ell}^{2\sigma_1} \abs{\widehat{g}(t,k-\ell,\eta - t\ell)}^2 \dd t \right)^{1/2}  \dd\ell \\ 
& \lesssim \epsilon^2 \int_\ell  \left( \int_0^T \frac{\abs{\ell} \abs{\eta-kt}^2}{\jap{\ell}^4 \jap{\ell,\ell t}^{2(\sigma_2 - \sigma_1)} \jap{k-\ell,\eta-t\ell}^{2\sigma_1}} \dd t \right)^{1/2}  \dd\ell \\ 
& \lesssim \epsilon^2 \int_\ell  \left( \frac{\abs{\ell}}{\abs{\ell}^3\jap{\ell}^6} \right)^{1/2}  \dd\ell \\ 
& \lesssim \epsilon^2. 
\end{align*}
Now turn to the $NL_{LH}$ term. First, we use that
$\rho(t,k) = \hat{g}(t,k,kt)$, \eqref{ctrl:LowLinftyf}, and
\eqref{ctrl:LoLocalizedBoot}; second (using that the dimension is $d=3$)
\begin{align*}
NL_{LH} & \lesssim \int_0^T \int_\ell \abs{ \hat{g}(\ell,\ell t)} \frac{\abs{\ell}}{\jap{\ell}^2} \abs{\eta-kt} \jap{k-\ell,\eta-\ell t}^{\sigma_1} \abs{\widehat{g}(k-\ell,\eta-\ell t)}  \dd\ell \dd t \\ 
& \lesssim \epsilon\int_0^T \int_\ell \frac{\abs{\ell}}{\jap{\ell,\ell t}^{\sigma_1} \jap{\ell}^2} \abs{\eta-kt} \jap{k-\ell,\eta-\ell t}^{\sigma_1} \abs{\widehat{g}(k-\ell,\eta-\ell t)}  \dd\ell \dd t \\ 
& \lesssim \epsilon\int_0^T \jap{t} \left(\int_{\ell}\frac{\abs{\ell}^{2}}{\jap{\ell,\ell t}^{2\sigma_1} \jap{\ell}^4 \abs{k-\ell}^{2\delta}}  \dd\ell \right)^{1/2}  \left(\int_\ell \abs{k-\ell}^{2\delta} \jap{k-\ell,\eta-\ell t}^{2\sigma_3} \abs{\widehat{g}(k-\ell,\eta-\ell t)}^2   \dd\ell \right)^{1/2} \dd t \\ 
& \lesssim \epsilon^2 \int_0^T \jap{t} \left(\int_{\ell}\frac{\abs{\ell}^{2}}{\jap{\ell,\ell t}^{2\sigma_1} \jap{\ell}^4 \abs{k-\ell}^{2\delta}}  \dd\ell \right)^{1/2} \dd t \\ 
& \lesssim \epsilon^2 \int_0^T \jap{t} \left( \frac{1}{\jap{t}^{2-2\delta+3}} \right)^{1/2} \dd t \\ 
& \lesssim \epsilon^2,
\end{align*}
which, by choosing $\epsilon$ sufficiently small, completes the improvement of the $L^\infty_t L_{k,\eta}^\infty$ estimate \eqref{ctrl:LowLinftyf}. 
As this is the last estimate, this also completes the proof of Proposition \ref{prop:boot}.  

\appendix 
\section{Details regarding the linear problem} \label{apx:lin}
First, we state an important lemma regarding $\mathcal{L}$. 
\begin{lemma} \label{lem:Lctrl}
Recall the definition of $\mathcal{L}$ in \eqref{def:L}. For $0 \leq j
\leq \sigma$ and any $\zeta >0$
\begin{align}
\abs{k}^j \abs{\partial_\omega^j \mathcal{L}(i\omega,k)} \lesssim_\zeta \norm{W}_{L^1} \norm{f^0}_{H^{j+3/2+\zeta}_2} \label{ineq:Lj}. 
\end{align}
\end{lemma}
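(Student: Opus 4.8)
The plan is to reduce \eqref{ineq:Lj} to a one‑dimensional integral of $\widehat{f^0}$ along a ray in frequency space, and then to recognize that integral as an $L^2$‑trace estimate of the kind supplied by Lemma~\ref{lem:SobTrace}. We may assume $k\neq 0$ and that the right‑hand side of \eqref{ineq:Lj} is finite (otherwise there is nothing to prove). First I would note that $\overline{i\omega}=-i\omega$, so $\abs{e^{\overline{i\omega}t}}=1$, and differentiate the formula \eqref{def:L} $j$ times in $\omega$ under the integral sign (legitimate thanks to the absolute convergence established below):
\[
\partial_\omega^j\mathcal{L}(i\omega,k)=-(-i)^j\int_0^\infty t^{j+1}\,e^{-i\omega t}\,\abs{k}^2\,\widehat{W}(k)\,\widehat{f^0}(kt)\,\dd t .
\]
Using $\abs{\widehat{W}(k)}\lesssim\norm{W}_{L^1}$, the task becomes to bound $\norm{W}_{L^1}\,\abs{k}^{j+2}\int_0^\infty t^{j+1}\,\abs{\widehat{f^0}(kt)}\,\dd t$.

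The crucial step is the change of variables $s=\abs{k}t$, which cancels every power of $\abs{k}$ (since $kt=s\hat{k}$ with $\hat{k}:=k/\abs{k}\in\SS^2$) and leaves $\norm{W}_{L^1}\int_0^\infty s^{j+1}\,\abs{\widehat{f^0}(s\hat{k})}\,\dd s$, an integral of $\widehat{f^0}$ over the ray $\RR_+\hat{k}$. I would then apply Cauchy--Schwarz with the weight $\jap{s}^{-(j+3/2+\zeta)}$. The deterministic factor $\big(\int_0^\infty s^{2j+2}\jap{s}^{-(2j+3+2\zeta)}\,\dd s\big)^{1/2}$ is finite, being integrable near $s=0$ automatically and decaying like $s^{-1-2\zeta}$ at infinity --- this is exactly where the loss ``$3/2$'' and the necessity of $\zeta>0$ enter. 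Writing $G:=\jap{\cdot}^{j+3/2+\zeta}\widehat{f^0}$ and using $\jap{s\hat k}=\jap{s}$, the remaining factor equals $\big(\int_0^\infty\abs{G(s\hat k)}^2\,\dd s\big)^{1/2}=\norm{G}_{L^2(\RR_+\hat k)}\le\norm{G}_{L^2(C)}$, where $C$ is the straight line through the origin in direction $\hat k$. This also supplies the absolute convergence needed above.

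It remains to control $\norm{G}_{L^2(C)}$, and here I would invoke Lemma~\ref{lem:SobTrace}: in dimension $d=3$ one has $\norm{G}_{L^2(C)}\lesssim\norm{G}_{H^2(\RR^3)}$ since $2>(d-1)/2$. To estimate $\norm{G}_{H^2}$ I would expand $\partial_\xi^\beta G$ by the Leibniz rule for $\abs{\beta}\le 2$: a derivative falling on $\jap{\xi}^{j+3/2+\zeta}$ only lowers the exponent, so that weight stays $\lesssim\jap{\xi}^{j+3/2+\zeta}$, while a derivative falling on $\widehat{f^0}$ turns it into $\widehat{v^\alpha f^0}$ (up to a unimodular constant) with $\abs{\alpha}\le\abs{\beta}\le 2$; by Plancherel $\norm{\jap{\xi}^{j+3/2+\zeta}\widehat{v^\alpha f^0}}_{L^2}=\norm{\jap{\grad}^{j+3/2+\zeta}(v^\alpha f^0)}_{L^2}$, so summing over $\abs{\beta}\le 2$ yields $\norm{G}_{H^2}\lesssim\sum_{\abs{\alpha}\le 2}\norm{\jap{\grad}^{j+3/2+\zeta}(v^\alpha f^0)}_{L^2}=\norm{f^0}_{H^{j+3/2+\zeta}_2}$, which is precisely the right‑hand side of \eqref{ineq:Lj}. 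The computation is essentially routine once the structure is set up; the only points that need care --- rather than a genuine obstacle --- are the weight bookkeeping in this last step, so that exactly two velocity moments and exactly the regularity $j+3/2+\zeta$ appear (matching the subscript in $\norm{f^0}_{H^{j+3/2+\zeta}_2}$), and the verification that $\jap{s}^{-(j+3/2+\zeta)}$ is genuinely square‑integrable against $s^{j+1}$, which is what pins down the regularity threshold $3/2+\zeta$.
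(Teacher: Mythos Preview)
Your proposal is correct and follows essentially the same approach as the paper: differentiate under the integral, change variables $s=\abs{k}t$ to reduce to a one-dimensional ray integral, apply Cauchy--Schwarz (the paper places the weight $\jap{s}^{1/2+\zeta}$ on the $\widehat{f^0}$ factor rather than its reciprocal on the deterministic factor, but this is cosmetic), and then invoke the Sobolev trace Lemma~\ref{lem:SobTrace}. Your treatment of the final step---expanding $\norm{G}_{H^2}$ by Leibniz to land on $\norm{f^0}_{H^{j+3/2+\zeta}_2}$---is somewhat more explicit than the paper's, which simply jumps from the trace lemma to the conclusion.
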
 
\begin{proof} 
By the regularity requirement $f^0 \in H^{\sigma+3/2+0}_{M}$ and the Sobolev trace Lemma~\ref{lem:SobTrace}, we have  
\begin{align*} 
\abs{\partial_\omega^j \mathcal{L}(i\omega,k)} & \leq \int_0^\infty \abs{kt}^j \abs{\widehat{W}(k)}\abs{k}^2 t  \abs{\widehat{f^0}(kt)} \dd t \\ 
& = \int_0^\infty \abs{\widehat{W}(k)} s^{j+1} \abs{\widehat{f^0}\left( \frac{k}{\abs{k}}s \right)} \dd s  \\ 
& \lesssim_\zeta \norm{W}_{L^1} \left(\int_0^\infty s^{2j+2} \jap{s}^{1+2\zeta} \abs{\widehat{f^0}\left( \frac{k}{\abs{k}}s \right)}^2 \dd s \right)^{1/2} \\ 
& \lesssim \norm{W}_{L^1} \norm{f^0}_{H^{j+3/2+\zeta}_2}.
\end{align*}
\end{proof} 

Next, we prove Proposition~\ref{prop:electrostatic}. 
\begin{proof}[Proof of Proposition~\ref{prop:electrostatic}] 
  Let us recall the following formula from \cite{MouhotVillani11} (essentially from \cite{Penrose}; see also \cite{Glassey94}), adapted here to our slightly different definition of $\mathcal{L}$, 
  which neatly divides $\mathcal{L}$ into real and imaginary parts:
\begin{align*}
\mathcal{L}(i\omega, k) = \widehat{W}(k)\left(\mbox{p.v.} \int
  \frac{(f_k^0)'(r)}{r - \omega\abs{k}^{-1}} \dd r - i\pi \left(f_k^0\right)'\left(\frac{\omega}{\abs{k}}\right)\right), 
\end{align*}
where 
\begin{align*}
f_k^0(r) = \int_{\frac{k}{\abs{k}}r + k_\perp} f^0(x) \dd x.  
\end{align*}
Next note that when $f^0$ is radially symmetric, $f_k^0$ does not
depend on $k$. Further, recall that if $f^0$ is radially symmetric and $f^0$ is strictly positive, then $(f^0_k)' < 0$ by $v \in \Real^3$ (see e.g. \cite{MouhotVillani11}).   
Further, observe that, by Sobolev embedding,
$f^0 \in C^{0,\gamma}$ for some $\gamma \in (0,1)$, and hence the real
part of $\mathcal{L}$ is also a $C^{0,\gamma}$ function of
$\omega \abs{k}^{-1}$ (since the Hilbert transform maps
$C^{0,\gamma} \mapsto C^{0,\gamma}$ for $\gamma \in (0,1)$ \cite{BigStein}).  Next
note that
\begin{align*}
\mbox{p.v.} \int \frac{\left(f^0_k\right)'(r)}{r} \dd r \leq 0, 
\end{align*}
and hence by the H\"older continuity, there is an $m$ depending only on $f^0$ and
$\alpha$ such that for all $\omega \abs{k}^{-1} < m$, and we deduce
\begin{align*}
\mbox{p.v.} \int \frac{\left(f^0_k\right)'(r)}{r - \omega \abs{k}^{-1}} \dd r < \frac{1}{2\alpha}.  
\end{align*}
As $0 \leq \widehat{W}(k) \leq \frac{1}{\alpha}$ (recall \eqref{def:WscrnC}), it follows that for $\omega \abs{k}^{-1} < m$, $\abs{\mathcal{L} - 1} \geq 1/2$. 

For $\omega \abs{k}^{-1} > M$, for $M$ to be chosen shortly, sufficiently large write, 
\begin{align*}
  \mbox{p.v.} \int \frac{\left(f^0_k\right)'(r)}{r - \omega \abs{k}^{-1}} \dd r & = \int_{r \leq \frac{1}{2}\omega \abs{k}^{-1}} \frac{\left(f^0_k\right)'(r)}{r - \omega \abs{k}^{-1}} \dd r + \mbox{p.v.}\int_{r > \frac{1}{2}\omega \abs{k}^{-1}} \frac{\left(f^0_k\right)'(r)}{r - \omega \abs{k}^{-1}} \dd r \\ 
                                                                     & = - \frac{|k|}{\omega}\int_{r \leq \frac{1}{2}\omega
                                                                       \abs{k}^{-1}} \left(f^0_k\right)'(r) \sum_{n=0}^\infty \left( \frac{r|k|}{\omega} \right)^n  \dd r + \mbox{p.v.}\int_{r > \frac{1}{2}\omega \abs{k}^{-1}} \frac{\left(f^0_k\right)'(r)}{r - \omega \abs{k}^{-1}} \dd r \\ 
                                                                     & = L_I + L_O. 
\end{align*}
For $f^0$ and $\grad f^0$ rapidly decaying, the outer integral satisfies, at least, 
\begin{align*}
L_O & \lesssim \frac{\abs{k}^3}{\omega^3}. 
\end{align*}
Since $\left(f^0_k\right)'$ has zero average and is rapidly decaying, the leading
order contribution to the inner integral is also decaying rapidly:
\begin{align*}
-\frac{|k|}{\omega}\int_{r \leq \frac{1}{2}\omega \abs{k}^{-1}} \left(f^0_k\right)'(r) \dd r \lesssim \frac{\abs{k}^3}{\omega^3}. 
\end{align*}   
Therefore, the next order contribution is 
\begin{align*}
L_I = -\frac{\abs{k}^{2}}{\omega^2} \int \left(f^0_k\right)'(r) r \dd r + O\left(\frac{\abs{k}^3}{\omega^3}\right)
\end{align*} 
It follows that for $\omega \abs{k}^{-1} > M$, 
\begin{align*}
\textup{Re} \, \mathcal{L}(i\omega, k) = -\frac{\abs{k}^2}{(\alpha + \abs{k}^2)\omega^2} \int \left(f^0_k\right)'(r) r \dd r + O\left(\frac{\abs{k}^3}{\omega^3}\right). 
\end{align*}
Therefore, for $M$ chosen sufficiently large (depending on $\alpha$ and $f^0$), there holds 
\begin{align*}
\abs{\textup{Re} \, \mathcal{L}(i\omega, k) - 1} \geq \frac{1}{2}.   
\end{align*}
On the other hand, for all $0 < m < M < \infty$, due to the assumptions on $f_k^0$, the imaginary part of $\mathcal{L}$ is bounded uniformly away from zero over $m < \omega \abs{k}^{-1} < M$, that is, there exists a $\kappa = \kappa(m,M) > 0$ such that 
\begin{align*}
\inf_{m \leq \omega \abs{k}^{-1} \leq M}\abs{\textup{Im} \, \mathcal{L}(i\omega,k)} \geq \kappa. 
\end{align*}
The result then follows. 
\end{proof}

Next we prove Proposition~\ref{lem:LinearL2Damping}. 
\begin{proof}[Proof of Proposition~\ref{lem:LinearL2Damping}]
Note that $\phi$ will not generally be compactly supported in time but obviously 
\begin{align*}
\norm{ \abs{k}^{\alpha} \jap{k,kt}^\sigma \phi(t,k)}_{L_t^2(I)} \leq \norm{ \abs{k}^\alpha \jap{k,kt}^\sigma \phi(t,k)}_{L_t^2(\Real_+)}. 
\end{align*}

\noindent
\textit{Step~1. A priori estimate for integer $\sigma$:} Define
\begin{align*} 
\Phi(t,k) & =  \abs{k}^\alpha \phi(t,k) \\ 
H^\prime(t,k) & = \abs{k}^\alpha H(t,k) 
\end{align*}
and multiply both sides of the equation \eqref{eq:volterra} by $\abs{k}^\alpha$ to derive
\begin{align}
\Phi(t,k) = H^\prime(t,k) + \int_0^t K^0(t-\tau,k) \Phi(\tau,k) \dd\tau.
\end{align}
If we assume \emph{a priori} that all the quantities involved are $L^2$ integrable in time, then we can take the Fourier transform in time (extending as zero for $t < 0$ and extending $H$ by zero for $t > T_\star$) and we have for $\omega \in \Real$, 
\begin{align*} 
\tilde{\Phi}(\omega,k) = \tilde{H^\prime}(\omega,k) + \tilde{K^0}(\omega,k)\tilde{\Phi}(\omega,k), 
\end{align*}
where $\tilde{\Phi}(\omega,k),\tilde{H^\prime}(\omega,k)$ and $\tilde{K^0}(\omega,k)$ 
is the Fourier transform in time of $\Phi(t,k),H^\prime(t,k)$ and $K^0(t,k)$ (respectively) after extending by zero for negative times.
Now we note that 
\begin{align}
\tilde{K^0}(\omega,k) = \mathcal{L}(i\omega,k). \label{def:Liomegak} 
\end{align}
Regularity estimates in $\omega$ imply decay in $t$, so let us prove
$H^{\sigma}$ estimates in $\omega$.  Taking $\beta$ derivatives, where
$0 \leq \beta \leq \sigma$, and multiplying by
$\abs{k}^\beta \jap{k}^\gamma$ for $0 \leq \gamma \leq \sigma$ gives,
\begin{align*} 
\abs{k}^\beta \partial_\omega^\beta \jap{k}^\gamma \tilde{\Phi}(t, \omega) = \abs{k}^\beta \jap{k}^\gamma \partial_\omega^\beta \tilde{H^\prime}(t,\omega) + \abs{k}^\beta \jap{k}^\gamma \sum_{j = 0}^\beta \begin{pmatrix} \beta \\ j \end{pmatrix} \partial_\omega^{\beta-j} \mathcal{L}(i\omega,k) \partial_\omega^j \tilde{\Phi}(t, \omega). 
\end{align*} 
By taking $L^2_\omega$ norms and using the stability condition we then have
\begin{align*} 
\abs{k}^\beta \jap{k}^\gamma \norm{\partial_\omega^\beta
  \tilde{\Phi}(\cdot,k) }_{L^2_\omega} & \lesssim_{\kappa,\beta}
                                         \abs{k}^\beta \jap{k}^\gamma
                                         \norm{\partial_\omega^\beta
                                         \tilde{H^\prime}(\cdot,k)}_{L^2_\omega}
  \\ 
  & \hspace{2cm} + \jap{k}^\gamma \sum_{j = 0}^{\beta-1} \norm{\abs{k}^{\beta-j} \partial_\omega^{\beta-j} \mathcal{L}(i\cdot,k)}_{L^\infty_\omega} \abs{k}^j\norm{\partial_\omega^j \tilde{\Phi}(\cdot,k)}_{L^2_\omega}.   
\end{align*}  
Then using \eqref{ineq:Lj} and induction on $\beta$ we get for all $\beta$, $0 \leq \beta \leq \sigma$ and all $s$ for $0 \leq s \leq \sigma$, 
\begin{align} 
\norm{\abs{kt}^{\beta} \jap{k}^s \Phi(t,k)}_{L^2_t(I)} & \lesssim_{s,\beta} \kappa^{-\beta} \jap{k}^s \norm{ \jap{kt}^{\beta} H^\prime(t,k)}_{L^2_t(I)}. \label{ineq:phictrl}
\end{align}
Now we apply
\begin{align*} 
\jap{k,kt}^\sigma \approx \jap{k}^\sigma + \abs{kt}^{\sigma}  
\end{align*} 
and use \eqref{ineq:phictrl} with $\beta = 0, s = \sigma$ and $\beta =
\sigma, s = 0$ to conclude the a priori estimate
\eqref{ineq:LinearCtrl}. 
\medskip

\noindent
\textit{Step~2. Justifying a priori estimate for integer $\sigma$:}
Recall that this argument assumes a priori that we already have
sufficiently rapid decay on $\phi$. In order to make this argument
rigorous, one may use the technique described in
\cite{BMM13,villani2010} which is for all $\delta>0$, define
$\eta_\delta(t) = e^{-\delta t^2/2}$ and choose $\mu \leq 0$ be a real
number, then study
\begin{align*}
\Phi^\delta(t,k) = e^{\mu t}\eta_\delta(t) \Phi(t,k). 
\end{align*}
It is straightforward to show that for $C$ sufficiently large $\abs{\Phi(t,k)} \lesssim e^{Ct}$ and hence for $\mu < -C$ one goes through the derivations above and derives: 
\begin{align*}
\tilde{\Phi^\delta}(\omega,k) = \tilde{\eta_\delta} \ast \left(\frac{\tilde{H}(\cdot,k)}{1 - \mathcal{L}(\mu + i\cdot,k)} \right)(\omega). 
\end{align*}
Moreover, this function depends analytically on $\mu$ as long as we
stay away from a singularity where $\mathcal{L} = 1$.  By analytic
continuation, we may hence deduce that this formula holds all the way
for all $\mu \leq 0$.  From there, one may proceed by taking
derivatives in $\omega$ on $\tilde{\Phi}^\delta(\omega,k)$ and then
passing to the limit $\delta \rightarrow 0$ to deduce the desired
estimate \eqref{ineq:LinearCtrl}.

\end{proof} 

\bibliographystyle{abbrv} \bibliography{eulereqns}

\def\cprime{$'$}
\begin{thebibliography}{10}

\bibitem{backus}
G.~Backus.
\newblock Linearized plasma oscillations in arbitrary electron distribution.
\newblock {\em J. Math. Phys.}, 559(1):178--191, 1960.

\bibitem{BardosDegond1985}
C.~Bardos and P.~Degond.
\newblock Global existence for the vlasov-poisson equation in 3 space variables
  with small initial data.
\newblock In {\em Annales de l'IHP Analyse non lin{\'e}aire}, volume~2, pages
  101--118, 1985.

\bibitem{BM13}
J.~Bedrossian and N.~Masmoudi.
\newblock Inviscid damping and the asymptotic stability of planar shear flows
  in the {2D Euler} equations.
\newblock {\em Publ. math. de l'IH{\'E}S}, pages 1--106, 2013.

\bibitem{BMM13}
J.~Bedrossian, N.~Masmoudi, and C.~Mouhot.
\newblock Landau damping: paraproducts and gevrey regularity.
\newblock {\em To appear in {\it Annals of Partial Differential Equations}},
  2013.

\bibitem{Benachour89}
S.~Benachour.
\newblock Analyticit\'e des solutions des \'equations de {V}lassov-{P}oisson.
\newblock {\em Ann. Scuola Norm. Sup. Pisa Cl. Sci. (4)}, 16(1):83--104, 1989.

\bibitem{BGK57}
I.~B. Bernstein, J.~M. Greene, and M.~D. Kruskal.
\newblock Exact nonlinear plasma oscillations.
\newblock {\em Phys. Rev.}, 108:546, 1957.

\bibitem{BoydSanderson}
T.~J.~M. Boyd and J.~J. Sanderson.
\newblock {\em The physics of plasmas}.
\newblock Cambridge University Press, Cambridge, 2003.

\bibitem{CagliotiMaffei98}
E.~Caglioti and C.~Maffei.
\newblock Time asymptotics for solutions of {Vlasov-Poisson} equation in a
  circle.
\newblock {\em J. Stat. Phys.}, 92(1/2), 1998.

\bibitem{CheminLerner95}
J.-Y. Chemin and N.~Lerner.
\newblock Flot de champs de vecteurs non lipschitziens et \'equations de
  {N}avier-{S}tokes.
\newblock {\em J. Differential Equations}, 121(2):314--328, 1995.

\bibitem{Degond86}
P.~Degond.
\newblock Spectral theory of the linearized {Vlasov-Poisson} equation.
\newblock {\em Trans. Amer. Math. Soc.}, 294(2):435--453, 1986.

\bibitem{MR3471147}
H.~Dietert.
\newblock Stability and bifurcation for the {K}uramoto model.
\newblock {\em J. Math. Pures Appl. (9)}, 105(4):451--489, 2016.

\bibitem{MR3437866}
E.~Faou and F.~Rousset.
\newblock Landau damping in {S}obolev spaces for the {V}lasov-{HMF} model.
\newblock {\em Arch. Ration. Mech. Anal.}, 219(2):887--902, 2016.

\bibitem{FGVG}
B.~Fernandez, D.~G\'erard-Varet, and G.~Giacomin.
\newblock Landau damping in the {Kuramoto model}.
\newblock {\em Preprint arXiv:1410.6006, to appear in {\it Ann. Institut
  Poincar\'e - Analysis nonlin\'eaire}}.

\bibitem{GMS12}
P.~Germain, N.~Masmoudi, and J.~Shatah.
\newblock Global solutions for the gravity water waves equation in dimension 3.
\newblock {\em Ann. of Math. (2)}, 175(2):691--754, 2012.

\bibitem{Gevrey18}
M.~Gevrey.
\newblock Sur la nature analytique des solutions des \'equations aux
  d\'eriv\'ees partielles. {P}remier m\'emoire.
\newblock {\em Ann. Sci. \'Ecole Norm. Sup. (3)}, 35:129--190, 1918.

\bibitem{Glassey94}
R.~Glassey and J.~Schaeffer.
\newblock Time decay for solutions to the linearized {V}lasov equation.
\newblock {\em Transport Theory Statist. Phys.}, 23(4):411--453, 1994.

\bibitem{Glassey95}
R.~Glassey and J.~Schaeffer.
\newblock On time decay rates in {L}andau damping.
\newblock {\em Comm. Part. Diff. Eqns.}, 20(3-4):647--676, 1995.

\bibitem{HanKwan11}
D.~Han-Kwan.
\newblock Quasineutral limit of the {Vlasov-Poisson} system with massless
  electrons.
\newblock {\em Comm. Part. Diff. Eqns.}, 36(8):1385--1425, 2011.

\bibitem{HanKwanIacobelli14}
D.~Han-Kwan and M.~Iacobelli.
\newblock The quasineutral limit of the {Vlasov-Poisson} equation in
  {Wasserstein} metric.
\newblock {\em arXiv preprint arXiv:1412.4023}, 2014.

\bibitem{HanKwanRousset15}
D.~Han-Kwan and F.~Rousset.
\newblock Quasineutral limit for {Vlasov-Poisson} with {Penrose} stable data.
\newblock {\em arXiv preprint arXiv:1508.07600}, 2015.

\bibitem{Horst93}
E.~Horst.
\newblock On the asymptotic growth of the solutions of the {V}lasov-{P}oisson
  system.
\newblock {\em Math. Methods Appl. Sci.}, 16(2):75--86, 1993.

\bibitem{HwangVelazquez09}
H.~J. Hwang and J.~J.~L. Vela{\'z}quez.
\newblock On the existence of exponentially decreasing solutions of the
  nonlinear {Landau} damping problem.
\newblock {\em Indiana Univ. Math. J}, pages 2623--2660, 2009.

\bibitem{Landau46}
L.~Landau.
\newblock On the vibration of the electronic plasma.
\newblock {\em J. Phys. USSR}, 10(25), 1946.

\bibitem{LevermoreOliver97}
D.~Levermore and M.~Oliver.
\newblock Analyticity of solutions for a generalized {Euler} equation.
\newblock {\em J. Diff. Eqns.}, 133:321--339, 1997.

\bibitem{LZ11b}
Z.~Lin and C.~Zeng.
\newblock Small {BGK} waves and nonlinear {L}andau damping.
\newblock {\em Comm. Math. Phys.}, 306(2):291--331, 2011.

\bibitem{LP91}
P.-L. Lions and B.~Perthame.
\newblock Propagation of moments and regularity for the {$3$}-dimensional
  {V}lasov-{P}oisson system.
\newblock {\em Invent. Math.}, 105(2):415--430, 1991.

\bibitem{LyndenBell67}
D.~Lynden-Bell.
\newblock Statistical mechanics of violent relaxation in stellar systems.
\newblock {\em Mon. Not. R. astr. Soc.}, 136:101--121, 1967.

\bibitem{MalmbergWharton68}
J.~Malmberg, C.~Wharton, C.~Gould, and T.~O'Neil.
\newblock Plasma wave echo.
\newblock {\em Phys. Rev. Lett.}, 20(3):95--97, 1968.

\bibitem{Maslov}
V.~P. Maslov and M.~V. Fedoryuk.
\newblock The linear theory of {L}andau damping.
\newblock {\em Mat. Sb. (N.S.)}, 127(169)(4):445--475, 559, 1985.

\bibitem{MouhotVillani11}
C.~Mouhot and C.~Villani.
\newblock On {Landau} damping.
\newblock {\em Acta Math.}, 207:29--201, 2011.

\bibitem{Orr07}
W.~Orr.
\newblock The stability or instability of steady motions of a perfect liquid
  and of a viscous liquid, {Part I}: a perfect liquid.
\newblock {\em Proc. Royal Irish Acad. Sec. A: Math. Phys. Sci.}, 27:9--68,
  1907.

\bibitem{Penrose}
O.~Penrose.
\newblock Electrostatic instability of a uniform non-maxwellian plasma.
\newblock {\em Phys. Fluids}, 3:258--265, 1960.

\bibitem{Pfaffelmoser92}
K.~Pfaffelmoser.
\newblock Global classical solutions of the {V}lasov-{P}oisson system in three
  dimensions for general initial data.
\newblock {\em J. Differential Equations}, 95(2):281--303, 1992.

\bibitem{Ryutov99}
D.~Ryutov.
\newblock Landau damping: half a century with the great discovery.
\newblock {\em Plasma physics and controlled fusion}, 41(3A):A1, 1999.

\bibitem{Schaeffer91}
J.~Schaeffer.
\newblock Global existence of smooth solutions to the {V}lasov-{P}oisson system
  in three dimensions.
\newblock {\em Comm. Partial Differential Equations}, 16(8-9):1313--1335, 1991.

\bibitem{BigStein}
E.~Stein.
\newblock {\em {Harmonic Analysis: Real-Variable Methods, Orthogonality, and
  Oscillatory Integrals}}.
\newblock Princeton University Press, 1993.

\bibitem{Stix}
T.~Stix.
\newblock {\em Waves in plasmas}.
\newblock Springer, 1992.

\bibitem{VKampen55}
N.~van Kampen.
\newblock On the theory of stationary waves in plasmas.
\newblock {\em Physica}, 21:949--963, 1955.

\bibitem{villani2010}
C.~Villani.
\newblock Landau damping.
\newblock Lecture notes, to appear in {\em Panoramas et Synth\`ese}, 2011.

\bibitem{YuDriscoll02}
J.~Yu and C.~Driscoll.
\newblock Diocotron wave echoes in a pure electron plasma.
\newblock {\em {IEEE} Trans. Plasma Sci.}, 30(1), 2002.

\bibitem{YuDriscollONeil}
J.~Yu, C.~Driscoll, and T.~O`Neil.
\newblock Phase mixing and echoes in a pure electron plasma.
\newblock {\em Phys. of Plasmas}, 12(055701), 2005.

\end{thebibliography}

\end{document}